\theoremstyle{plain} 
	\newtheorem{thm}{Theorem}[section]
	\newtheorem{lemma}[thm]{Lemma} 
	\newtheorem{prop}[thm]{Proposition}
	\newtheorem{cor}[thm]{Corollary}
\theoremstyle{definition}
	\newtheorem{definition}[thm]{Definition} 
	\newtheorem{notation}[thm]{Notation} 
	\newtheorem{example}[thm]{Example} 
\theoremstyle{remark}
	\newtheorem{remark}[thm]{Remark}   
\DeclareMathOperator{\Spec}{Spec} 
\DeclareMathOperator{\Max}{Max}
\DeclareMathOperator{\Proj}{Proj}
\DeclareMathOperator{\Zar}{Zar}
\newcommand{\Abhy}{Abhyankar}
\newcommand{\complexC}{\mathbb{C}}
\newcommand{\naturalN}{\mathbb{N}}
\newcommand{\inverse}{^{-1}} 
\newcommand{\set}[2]{\{#1\mid #2\}}
\newcommand{\ideal}{\trianglelefteq} 
\newcommand{\gendby}[1]{\langle #1 \rangle} 
\newcommand{\eps}{\epsilon}
\newcommand{\fraka}[1][a]{\mathfrak{#1}}
\newcommand{\localDomains}[1][D]{\mathcal{L}(K/#1)}
\newcommand{\curlyS}{\mathcal{S}}
\newcommand{\curlySOf}[1]{\curlyS \left( {#1} \right)}
\newcommand{\projModel}{\mathcal{W}}
\newcommand{\modelX}{\mathcal{X}}
\newcommand{\modelY}{\mathcal{Y}}
\newcommand{\abyideal}{{\mathcal{I}}}
\newcommand{\sheafO}{{\mathcal{O}}}
\newcommand{\sheafideal}[1][F]{{{\mathcal{#1}}}} 
\newcommand{\dstar}{D^{\ast}}
\newcommand{\dstarof}[1][\modelX]{D_{#1}^{\ast}}
\DeclareMathOperator{\AffineSet}{Aff}
\newcommand{\affinesInX}{\AffineSet (\modelX)} 
\newcommand{\dominatedby}{\preccurlyeq}
\newcommand{\centermap}{\delta}
\newcommand{\centerFrom}[2]{{\centermap}^{#1}_{#2}}
\title{Viewing Quasi-Coherent Sheaves of Ideals as Ideals of a Ring}
\author{Ay\c{c}in Iplik\c{c}i Arodirik}
\email{iplikciarodirik.1@osu.edu}
\address{Department of Mathematics, The Ohio State University, \\Columbus, OH 43210}
\date{\today} 
\begin{document}
	\begin{abstract}
		This paper presents a technique for viewing quasi-coherent sheaves of ideals of a given blowup as regular ideals of a ring. In the paper, we first describe (Zariski) models as integral schemes that are separated and of finite type over an integral domain $D$. We then construct a ring $D^*$ for a given projective model (e.g. blowup of $D$ over a finitely generated ideal) by intersecting Nagata function rings. The spectrum of $D^*$ contains the projective model, but similar to the Proj-construction, it includes additional prime ideals. We characterize the relevant ideals and construct a faithfully flat morphism of schemes from the spectrum of $D^*$ to the model. Finally, using Abhyankar's definition of ideals on models, we identify the relevant ideals of $D^*$ with the quasi-coherent sheaves of ideals of the corresponding projective model.
	\end{abstract}
\keywords{quasi-coherent sheaves of ideals, projective model, blowup}

\maketitle

\section*{Introduction}
A projective scheme can be constructed as a \( \Proj \) of a graded ring. When the graded ring is taken as a polynomial ring \( D [t] \) of an integral domain \( D \), where \( t \) is indeterminate, \( \Proj D[t] \) and \( \Spec D \) are homeomorphic to each other. In other words, the homogeneous prime ideals of \( D[t] \) that are not irrelevant display the spectrum of \( D \). Instead of a polynomial extension, if we consider the Nagata extension \( D(t) \), then the maximal spectrum of \( D(t) \) presents the maximal spectrum of \( D \). But, in general we do not have a bijective correspondence between the spectra of \(  D(t) \) and \(  D \). We want to characterize “irrelevant ideals” of \( D(t) \), so that the “relevant prime spectrum” of \( D(t) \) will reflect \( \Spec D \).  Working with the Nagata extensions rather than the polynomial extensions allows us to encapsulate the local affine pieces of a projective scheme (along with some irrelevant ideals) into a single ring. More precisely, (Theorem \ref{thm. flat morphism from Spec Dstar to projective X}) for a given blow-up \( \modelX \) of an integral domain \( D \) along a finitely generated ideal, we construct a ring \( \dstarof[\modelX] \) and a morphism of schemes from \( \Spec \dstarof[\modelX] \) to \( \modelX \) that is faithfully flat. As an application (Corollary~\ref{thm. relevant ideals corresponds q-coh sheaf of ideals}) we show that the quasi-coherent ideals on \( \modelX \) are in bijective correspondence with the relevant ideals of \( \dstarof[\modelX] \). 

Instead of working with schemes, we will use the terminology of (Zariski) models as in \cite{ZariskiSamuel2}. We regard the spectrum of a ring as the set of localizations at its prime ideals rather than as the set of prime ideals. In this sense, a model over an integral domain \( D \) is a union of finite spectra of finitely generated \( D \)-algebras such that no two localizations are dominated by the same valuation domain. Section~\ref{Sec. Prelim Model} provides related terminology on the models and Subsection~\ref{Subsec. top on models} recalls some properties about the topological structure of models. In essence, a model over \( D \) is an irreducible, reduced, and separated \( D \)-scheme of finite type. Finally, Subsection~\ref{Subsec. Abhy ideals} introduces \Abhy{}'s definition of ideals on models appearing in \cite{Abh_resolutionBook}, which corresponds to the quasi-coherent sheaves of ideals. 

The spectrum of a finitely generated \( D \)-algebra is called an affine model over \( D \), and it is apparent that an affine model is also an affine scheme with some extra properties. Likewise, projective models are also projective schemes. These observations can be found in prior literature on the subject (e.g. \cite{heinzerolb2020noetIntDimtwo}, \cite{heinzer2016blowingCompleteIdeals}, \cite{olberding2017principalIdealThmforVal}).
Let \( \Zar(K/A) \) denote the set of valuations of \( K \) containing \( A \). \citet{koji1990pruferDomAffineSch} shows \( \Zar(K/A) \) is a locally ringed space. It is known that \( \Zar(K/A) \) is the projective limit of the projective models (see \citet{ZariskiSamuel2}). \citet{koji1993ringedSpVal} expresses and proves this within scheme terminology when the ground ring \( A \) is a Noetherian ring. Moreover, \citet{olberding2015affineSchemes} characterizes subspaces of \( \Zar(K/A) \)  that correspond to affine schemes. Section~\ref{Sec. models as schemes} connects the terminologies and translates the properties of models into their scheme counterparts. The proofs in further sections will mainly use the model terminology but the statements might be expressed within the scheme terminology.

Section~\ref{Sec. Dstar of models} presents the aforementioned ring construction. For a model \( \modelX \),
we set \( \dstarof[\modelX] \) as the intersection of the Nagata extension of local rings from \( \modelX \), that is 
\begin{equation*}
	\dstarof[\modelX]  = \bigcap_{R \in \modelX} R(t)
\end{equation*}
where \( R(t) \) is the Nagata extension of \( R \).
The Nagata extension of a ring \( R \) is the localization of the polynomial ring \( R[t] \) at the set of all primitive polynomials; that is, a polynomial whose coefficients generate the unit ideal \( R \). Subsection~\ref{Subsec. construction Dstar of models} explores this construction when \( \modelX \) corresponds to a projective model, while Subsection~\ref{Subsec. rel Ideals of Dstar} defines a generalization of the concept of homogeneous ideals, from polynomial rings to Nagata extensions and then to \( \dstarof[\modelX] \). We call an ideal of \( D(t) \) relevant if it is an extension of an ideal from \( D \). As \( t \) is a unit in \( D(t) \), the relevant prime spectrum is exactly \( \Proj D[t] \). In Section~\ref{Sec. Abh vs rel ideals}, we demonstrate that the quasi-coherent sheaves of ideals on a projective model can be interpreted as the relevant ideals of the constructed ring.

By a ring, we mean a commutative ring with identity. Throughout this paper, \( D \) refers to an integral domain with \( K \) a field containing \( D \) as a subring.

\section{Preliminaries on Models} \label{Sec. Prelim Model}

We recall the necessary definitions and facts regarding the model terminology. For more details, the reader is referred to \citet[Ch. VI \S 17]{ZariskiSamuel2} and \citet[Ch. I \S 1]{Abh_resolutionBook}. 
The set of all local domains that contain \( D \) and have fraction field \( K \) is denoted by \( \localDomains \).  The following notations will be frequently used. 

\begin{notation} 
	For a ring \( A \), the set of localizations of \( A \) at its prime ideals is denoted by \( \curlyS (D) \); that is,
	\begin{equation*}
		\curlyS (A) = \set{A_P}{P\ideal A \text{ is prime} }.
	\end{equation*}
	For a subset \( H \) of \( K \) containing a nonzero element, we write
	\begin{equation*}
		\projModel(D;H) = \bigcup_{0 \neq x\in H} \curlySOf{D[Hx\inverse]}
	\end{equation*}
	where \( D[H x\inverse] \) is the subring of \( K \) generated (as a ring) by the elements of \( D \) and the elements of the form \( y x\inverse \) for \( y\in H \). We also write \( \projModel(D;x_1,\dotsc ,x_n) \) when \( H=\{x_1,\dotsc ,x_n\} \) is a finite set.
\end{notation}
The notation \( \curlySOf{A} \) is referred to as \( \mathfrak{V}(A) \) in~\cite{Abh_resolutionBook} and as \( V(A) \) in \cite{ZariskiSamuel2}, with the latter omitting the field of fractions of \( A \). In the following section, we consider \( \localDomains \) as a topological space, and the induced topology on \( \curlySOf{A} \) will correspond to the usual Zariski spectrum topology if we regard \( \curlySOf{A} \) as \( \Spec A \) by identifying a prime ideal \( P \) with the local ring \( A_P \).  In general, the inclusion reversing nature of the identification prevents \( \curlySOf{\cdot} \) from being a closed set, whereas, \( V(\cdot) \) is typically associated with closed sets in the spectrum topology. We change the notation to avoid any potential confusion and to highlight its identification with the spectrum. 

Although both \cite{ZariskiSamuel2} and \cite{Abh_resolutionBook} define a local ring as a Noetherian (quasi-)local ring, we follow the convention that a local ring is a ring with a unique maximal ideal, not necessarily Noetherian. We adjust the definitions accordingly. Around the terminology of models, \cite{Abh_resolutionBook} is mainly consistent with \cite{ZariskiSamuel2} with a few slight technical variations (see Remark~\ref{rmk. different versions for model def}). The adapted definitions primarily follow~\cite{Abh_resolutionBook}.

\begin{definition} \phantom{pushing the items to the next line}
	\begin{itemize}
		\item Given two local domains \( (R, M_R) \) and \( (S, M_S) \), we say 
		\( S \) dominates \( R \) if \( R \) is a subring of \( S \)	satisfying \( M_R = M_S\cap R \). 
		\item A premodel of \( K \) over \( D \) is a non-empty subset of \( \localDomains \).
		\item A premodel \( \modelX \) is called irredundant if no two distinct elements of \( \modelX \) are dominated by the same valuation ring; equivalently if no distinct two elements of \( \modelX \)  can be dominated by the same local domain.  
		\item A premodel \( \modelX \) is called complete if for any valuation ring \( V \) of \( K \) that contains \( D \), there exists an element of \( \modelX \) that is dominated by~\( V \).
		\item A subring \( A \) of \( K \) that contains \( D \) is called an affine ring over \( D \) if it is finitely generated as a \( D \)-algebra. In other words, there are \( x_1, \dotsc x_n \in K \) such that \( A=D[x_1, \dotsc x_n] \) i.e. \( A \) is the smallest subring of \( K \) that contains \( D \cup \{ x_1, \dotsc x_n \} \).  
		\item A model of \( K \) over \( D \) is an irredundant premodel \( \modelX \) of the form \( \bigcup_{\eps \in \Lambda } \curlySOf{D_{\eps}} \) where  \( \big\{ D_{\eps} \big\}_{\eps \in \Lambda }  \) is a finite family of affine rings over \( D \). 
		\item A projective model \( \modelX \) of \( K \) over \( D \) is a premodel of \( K \) over \( D \) of the form \( \projModel(D;H) \) for a finite subset \( H\subseteq K \) that contains a nonzero element. In such a case, we say \( \modelX \) is determined by \( H \). 
	\end{itemize}
\end{definition}

\begin{example}
	When \( K \) is the fraction field of \( D \), the premodel \(  \curlySOf{D} \) is a complete model of \( K \) over \( D \). 
	More generally, if \( A \) is an affine ring over \( D \) and \( K \) is the field of fractions of \( A \), then \( \curlySOf{A} \) is a model of \( K \) and called an affine model over \( D \).
\end{example}
Not all finite unions of spectra form a model. The following provides a non-example of models. 
\begin{example} \label{ex. non-irredundant premodel}
	Let \(  D = \complexC [x,y] \) and \( K = \complexC (x,y) \). Then \( \curlySOf{D}\cup \curlySOf{D[x/y]}  \) is a premodel of \( K \) over \( D \), but it is not a model as it is not irredundant. 
\end{example}

\begin{remark} \label{rmk. different versions for model def}
	We often omit the field \( K \) and refer to a model of \( K \) over \( D \) simply as a model over \( D \). This will not cause confusion since for a given model of \( K \) over \( D \), the field \( K \) is the unique field contained in the model. Note that this use is adopted from \cite{Abh_resolutionBook}. In \cite{ZariskiSamuel2}, a model \( \modelX \) of a field \( K' \) over \( D \) is defined in such a way that it does not contain \( K' \). Even with this version, all \( R\in \modelX \) have the same fraction field due to the irrendundancy condition and hence adding the common fraction field \( K \) to \( \modelX \) yields the version we adopted. 
\end{remark}

Contrary to the ambient field \( K \), the chosen ground ring \( D \) might affect the properties of a model of \( K \) over \( D \). 
\begin{example}
	For \(  D = \complexC [x,y] \), both \(  \curlySOf{D} \) and \(  \curlySOf{D[x/y]}  \) are affine models over \( D \). However, only the first one is complete. No element of the latter one is dominated by the valuation domain \( D[y/x]_{\gendby{x, y/x}} \). 
\end{example}

As the naming suggests, projective models are indeed models. We present some facts on projective models that will be used tacitly in the upcoming sections.
\begin{prop} \cite[1.7.3]{Abh_resolutionBook} 
	Let \( H \) be a subset of \( K \) that contains a nonzero element. If \( K \) is the field of fractions of \( D[H x\inverse] \) for some (hence for all) nonzero \( x\in H \), then \( \projModel(D;H) \) is an irredundant premodel of \( K \) over \( D \). If, additionally, \( H \) is finite, then \( \projModel(D;H) \) is a complete model.
\end{prop}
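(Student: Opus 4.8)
The plan is to check the three assertions in turn by elementary valuation--theoretic manipulations, after first disposing of the parenthetical ``for some (hence for all)''. If $K$ is the fraction field of $D[Hx\inverse]$ for one nonzero $x\in H$ and $y\in H$ is another nonzero element, then $x/y\in D[Hy\inverse]$, so its inverse $y/x$ lies in the fraction field of $D[Hy\inverse]$; since $h/x=(h/y)(y/x)$ for every $h\in H$, the ring $D[Hx\inverse]$ is contained in the fraction field of $D[Hy\inverse]$, whence $K$, being the fraction field of $D[Hx\inverse]$, is contained in (hence equal to) the fraction field of $D[Hy\inverse]$. In particular each $D[Hx\inverse]$ with $0\neq x\in H$ is a subring of $K$ containing $D$ with fraction field $K$, so every localization $D[Hx\inverse]_P$ belongs to $\localDomains$; as $H$ contains a nonzero element, $\projModel(D;H)$ is a nonempty subset of $\localDomains$, that is, a premodel of $K$ over $D$.

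For irredundance, suppose a valuation ring $V$ of $K$ with $D\subseteq V$ dominates two members $R_1=D[Hx\inverse]_{P_1}$ and $R_2=D[Hy\inverse]_{P_2}$ of $\projModel(D;H)$, and write $A=D[Hx\inverse]$, $B=D[Hy\inverse]$. Domination gives $A\subseteq R_1\subseteq V$ and $B\subseteq R_2\subseteq V$; intersecting with $A$ and using $M_{R_1}=M_V\cap R_1$ together with $P_1 R_1\cap A=P_1$ shows that the localizing primes are the $V$--centers, $P_1=M_V\cap A$ and $P_2=M_V\cap B$. Since $y/x\in A\subseteq V$ and $x/y\in B\subseteq V$, the elements $x$ and $y$ have equal value under $V$, so $y/x\notin M_V$, hence $y/x\notin P_1$ and $y/x$ is a unit of the local ring $R_1$. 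Then $x/y\in R_1$ and $h/y=(h/x)(x/y)\in R_1$ for every $h\in H$, so $B\subseteq R_1$; and any element of $B\setminus P_2$ lies outside $M_V$, hence outside $M_{R_1}=M_V\cap R_1$, so it is a unit of $R_1$. Therefore $R_2=B_{P_2}\subseteq R_1$, and the symmetric argument gives $R_1\subseteq R_2$, so $R_1=R_2$. Note that this step uses no finiteness of $H$.

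Now assume $H$ is finite and, discarding zeros, write $H=\{x_1,\dots,x_n\}$ with all $x_i\neq 0$. Given a valuation ring $V$ of $K$ with $D\subseteq V$ and associated valuation $v$, choose $j$ with $v(x_j)$ minimal; then $v(x_i/x_j)\geq 0$ for all $i$, so $D[Hx_j\inverse]\subseteq V$. Put $P=M_V\cap D[Hx_j\inverse]$, a (proper) prime ideal, and $R=D[Hx_j\inverse]_P$. Every element of $D[Hx_j\inverse]\setminus P$ has $v$--value $0$, hence is a unit of $V$, so $R\subseteq V$; and $v$ is positive on $D[Hx_j\inverse]$ exactly on $P$, so $M_R=PR=M_V\cap R$. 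Thus $R\in\projModel(D;H)$ is dominated by $V$, proving completeness. Finally, since each $D[Hx_i\inverse]=D[x_1/x_i,\dots,x_n/x_i]$ is an affine ring over $D$ and the union defining $\projModel(D;H)$ is finite, $\projModel(D;H)$ is a model of $K$ over $D$.

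I expect the only genuinely delicate point to be the irredundance step: the crux is recognizing that domination by $V$ forces the localizing primes $P_1,P_2$ to be the $V$--centers of $A$ and $B$, and that $x/y$ is then a unit of $R_1$, which lets one bootstrap $B\subseteq R_1$ and conclude $R_2\subseteq R_1$. The remaining verifications are routine bookkeeping with valuation rings; one should simply be careful that finiteness of $H$ is invoked only in the completeness part, to guarantee that $\min_i v(x_i)$ exists.
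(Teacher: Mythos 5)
Your proof is correct; the paper itself gives no argument for this proposition, citing it directly from Abhyankar [1.7.3], and your write-up is the standard valuation-theoretic proof of that result. All three steps check out --- in particular your irredundance argument correctly isolates the key point that $y/x$ and $x/y$ both lie in the dominating ring, forcing $y/x$ to be a unit there and hence outside the center $P_1$, and you correctly confine the use of finiteness of $H$ to the existence of $\min_i v(x_i)$ in the completeness step.
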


\begin{prop} \cite[1.9.3 \& 1.9.5]{Abh_resolutionBook}
	If \( H \) is a \( D \)-submodule of \( K \), then \( \projModel(D;H) = \projModel(D;\{x_\eps\}_{\eps \in \Lambda }) \) for any set of generators \( \{x_\eps\}_{\eps \in \Lambda } \) of \( H \). Moreover, when \( H \) is an ideal of \( D \), then
	\begin{equation*}
		\set{R\in \curlySOf{D}}{HR=R}=\set{R\in \projModel(D;H)}{HR=R}
	\end{equation*}
	holds.
\end{prop}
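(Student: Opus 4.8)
The plan is to isolate a single ``chart-switching'' principle comparing the rings $C_x:=D[Hx\inverse]$ for varying $x$, and to read off both assertions from it. (Informally, the first assertion says the blow-up depends only on $H$ and not on a chosen set of generators, and the second says the blow-up is an isomorphism away from its center; I would prove both directly in the model language.) The easy starting point is that $C_x$ depends on $H$ only as a $D$-module: if $\{x_\eps\}_{\eps\in\Lambda}$ generates $H$ over $D$, then $C_x=D[\{x_\eps\}x\inverse]$ for every nonzero $x\in K$, so that $\projModel(D;\{x_\eps\}_{\eps\in\Lambda})=\bigcup_{x_\eps\neq0}\curlySOf{C_{x_\eps}}$, and, as each $x_\eps\in H$, this is contained in $\projModel(D;H)=\bigcup_{0\neq x\in H}\curlySOf{C_x}$. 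Hence the first assertion reduces to the reverse inclusion. The chart-switching lemma I would prove reads: if $x,w\in H$ are nonzero and $P$ is a prime of $C_x$ with $w/x\notin P$ (recall $w/x\in C_x$), then $(C_x)_P=(C_w)_Q$ where $Q:=P(C_x)_P\cap C_w$. Granting it, the reverse inclusion is immediate: for a prime $P$ of $C_x$, expand $x=\sum_i d_i x_{\eps_i}$ with $d_i\in D$; then $1=\sum_i d_i(x_{\eps_i}/x)$ in $C_x$, so some $x_{\eps_{i_0}}/x$ lies outside $P$ (and necessarily $x_{\eps_{i_0}}\neq0$), and the lemma exhibits $(C_x)_P$ as a localization of $C_{x_{\eps_{i_0}}}=D[\{x_\eps\}x_{\eps_{i_0}}\inverse]$, i.e.\ as a point of $\projModel(D;\{x_\eps\}_{\eps\in\Lambda})$.

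I would prove the lemma by two inclusions. Since $w/x$ is a unit of $(C_x)_P$, the formula $y/w=(y/x)(w/x)\inverse$ (for $y\in H$) gives $C_w\subseteq(C_x)_P$, and then $(C_w)_Q\subseteq(C_x)_P$ because every element of $C_w\setminus Q$ is a unit of the local ring $(C_x)_P$. For the opposite inclusion, $x/w\in C_w$ is the reciprocal of $w/x$, hence a unit of $(C_x)_P$, so $x/w\notin Q$ and $w/x\in(C_w)_Q$; then $y/x=(y/w)(w/x)\in(C_w)_Q$ (for $y\in H$) gives $C_x\subseteq(C_w)_Q$, and since $Q(C_w)_Q\subseteq P(C_x)_P$ while $C_x\cap P(C_x)_P=P$, no element of $C_x\setminus P$ can lie in the maximal ideal of $(C_w)_Q$; therefore $(C_x)_P\subseteq(C_w)_Q$.

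For the second assertion, assume now $H$ is an ideal of $D$, so that $H\subseteq D$ and, for a prime $P$ of $D$, $HD_P=D_P$ precisely when $H\not\subseteq P$. If $R=D_P$ with $H\not\subseteq P$, choose $x\in H\setminus P$; then $C_x\subseteq D_P$, and a localization argument of exactly the shape of the lemma --- comparing $D_P$ with $(C_x)_{PD_P\cap C_x}$, with $x$ now a unit of $D_P$ so that $D_x$ lies in the localization $(C_x)_{PD_P\cap C_x}$ --- shows $D_P=(C_x)_{PD_P\cap C_x}\in\projModel(D;H)$. Conversely let $R=(C_x)_P\in\projModel(D;H)$ with $HR=R$, and pick $z\in H$ that is a unit of $R$; since $H\subseteq D$ we have $x\in C_x$, so $z/x$ is a unit of $R$ and the lemma yields $R=(C_z)_Q$ with $C_z=D[Hz\inverse]$. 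Now $H\subseteq D$ forces $D\subseteq C_z\subseteq D_z$, while $z\inverse\in R$ forces $D_z\subseteq R$; from $C_z\subseteq D_z\subseteq(C_z)_Q=R$ it follows that $R$ is a localization of $D_z$, hence of $D$, so $R\in\curlySOf{D}$. Together these two implications say that a local ring lies in one of the two sets in the statement if and only if it lies in the other, which is the desired equality.

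The substance of the whole argument is the chart-switching lemma, and the delicate point inside it is the inclusion $(C_x)_P\subseteq(C_w)_Q$: one must check that $Q$ --- defined as the contraction of the \emph{maximal} ideal of $(C_x)_P$ --- pulls back to exactly $P$ along the inclusion $C_x\hookrightarrow(C_w)_Q$, so that inverting $C_x\setminus P$ inside $(C_w)_Q$ inverts nothing that was not already a unit. I would also flag that the second assertion genuinely uses that $H$ is an ideal and not merely a $D$-submodule of $K$: it is the containment $H\subseteq D$ that produces the sandwich $D\subseteq D[Hz\inverse]\subseteq D_z$, and it is this sandwich that pins $R$ down as a localization of $D$.
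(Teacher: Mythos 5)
The paper gives no proof of this proposition---it is quoted from Abhyankar [1.9.3 \& 1.9.5]---so there is no in-paper argument to compare against; your proof has to stand on its own, and it does. The chart-switching lemma is stated and proved correctly: the delicate inclusion \( (C_x)_P\subseteq (C_w)_Q \) is justified exactly where it needs to be (every element of \( C_x\setminus P \) becomes a unit in \( (C_w)_Q \) because \( Q(C_w)_Q\subseteq P(C_x)_P \) and \( C_x\cap P(C_x)_P=P \)), and the reduction of the first assertion to it via \( 1=\sum_i d_i(x_{\eps_i}/x) \) escaping the prime \( P \) is clean. Two steps are terse but sound and worth spelling out if this were written up: first, the existence of \( z\in H \) that is a unit of \( R \) in the second assertion uses that \( H\subseteq D\subseteq R \) together with \( HR=R \) and locality of \( R \) (some term of \( 1=\sum h_ir_i \) must lie outside the maximal ideal, forcing some \( h_i \) outside it); second, the final claim that \( C_z\subseteq D_z\subseteq (C_z)_Q \) makes \( R=(C_z)_Q \) a localization of \( D_z \), hence of \( D \), rests on the general fact that if \( A\subseteq B\subseteq A_Q \) then \( A_Q=B_{M\cap B} \) where \( M \) is the maximal ideal of \( A_Q \)---true, but it deserves the one-line verification. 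You are also right to flag that \( H\subseteq D \) is what makes the second assertion work; for a general \( D \)-submodule of \( K \) the sandwich \( D\subseteq D[Hz\inverse]\subseteq D[1/z] \) fails.
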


\begin{remark}
	Consider a projective model \( \modelX = \projModel(D;H) \) determined by a finite subset \( H \) of \( K \). Suppose that \( K \) is the fraction field of \( D \). Then, by clearing the denominators, we may assume that \( H \) is a finitely generated ideal of \( D \). More precisely, we first choose a nonzero \( x \in K \) such that  \( x H \subseteq D \). Such an element exists as \( H \) is a finite set; for instance \( x \) can be taken as the product of denominators of the elements in \( H \). Now, taking \( I \) as the ideal of \( D \) generated by the finite set \( x H \) yields \( \modelX = \projModel(D;I) \).
\end{remark}

\subsection{Zariski Topology on Models} \label{Subsec. top on models}
A premodel \( \modelX   \) has a topology where the sets of the form \( \set{R \in \modelX}{R \supseteq H} \) for a finite subset \( H \) of \( K \) form a basis. This topology is called the Zariski topology. In fact, the Zariski topology on \( \modelX \) is induced from the Zariski topology of \( \localDomains \). When \( \modelX = \curlySOf{D} \), this topology corresponds to the (spectrum) Zariski topology via the identification \( \curlySOf{D} \ni R=D_P \cong P \in \Spec(D) \). In this case, for nonzero elements \( f\in D \), the open sets \( \curlySOf{D[1/f]} \) correspond to the basic open sets in the (spectrum) Zariski topology.  

The affine rings \( A \) over \( D \) which satisfy \( \curlySOf{A} \subseteq \modelX \) have an important role in the structure of a model \( \modelX \) over \( D \). By definition such subsets are open in \( \modelX \) and moreover, they form a basis for open sets. In Section~\ref{Sec. models as schemes}, we observe that when a model \( \modelX \) is considered to be a scheme, they correspond to affine open subschemes. We introduce a notation for the collection of such rings. 

\begin{notation}
	For a given \( \modelX \subseteq \localDomains \), let \( \affinesInX \) denotes the set of affine rings over \( D \) such that \( \curlySOf{A} \subseteq \modelX \).
\end{notation}

\begin{definition} 
	We say a model \( \modelX \) over \( D \) is defined by the family \( \{D_{\eps}\}_{\eps \in \Lambda } \) if \( \modelX = \bigcup_{\eps \in \Lambda } \curlySOf{D_{\eps}} \) and \( D_{\eps} \) is in \( \affinesInX \) for all \( \eps \). 
\end{definition}
The following proposition collects some facts on the topological structure of models.
\begin{prop} \cite[6.2]{Abh_resolutionBook} \label{lem affines are open}
	Suppose \( \modelX \) is a model of \( K \) over \( D \). Then, the followings hold.
	\begin{enumerate}[i.]
		\item For \( R\in \modelX \) and \( A\in \affinesInX \), if \( A \subseteq R \)  then \( R\in \curlySOf{A} \). \label{item1 in lem affines are open}
		\item The set \( \set{\curlySOf{A}}{A\in \affinesInX} \) is a basis of \( \modelX \). \label{item2 in lem affines are open}
		\item The closure of an element \( R\in \modelX \) is
		\begin{equation*}
			\overline{\{R\}} = \set{S \in \modelX}{ S \subseteq R}.
		\end{equation*}
		In particular, the field \( K \) is the generic point of \( \modelX \). \label{item3 in lem affines are open}
	\end{enumerate}
\end{prop}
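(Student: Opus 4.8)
The plan is to prove (i) first and then derive (ii) and (iii) from it together with the given description of the Zariski topology, namely that the sets $\set{S\in\modelX}{S\supseteq H}$ with $H\subseteq K$ finite form a basis. For (i), let $R\in\modelX$ and $A\in\affinesInX$ with $A\subseteq R$, write $M_R$ for the maximal ideal of $R$, and put $P=M_R\cap A$. Then $P$ is a prime of $A$, so $A_P\in\curlySOf{A}\subseteq\modelX$. The heart of the argument is to check that $R$ dominates $A_P$: for $a\in A$ and $s\in A\setminus P$ the element $s$ lies in $R\setminus M_R$ and so is a unit of $R$, whence $a/s\in R$ (giving $A_P\subseteq R$) and, when moreover $a\in P$, $a/s\in M_R$ (giving $PA_P\subseteq M_R\cap A_P$); and since $M_R\cap A_P$ is a proper ideal of the local ring $A_P$ it must lie in $PA_P$, so $M_R\cap A_P=PA_P$. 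Thus $R$ is a local domain dominating both $R$ and $A_P$, and since $\modelX$ is irredundant no two distinct members of it are dominated by a common local domain, forcing $R=A_P$; that is, $R\in\curlySOf{A}$.

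Next I would use (i) to record that, for $A=D[x_1,\dotsc,x_n]\in\affinesInX$, one has $\curlySOf{A}=\set{S\in\modelX}{A\subseteq S}=\set{S\in\modelX}{S\supseteq\{x_1,\dotsc,x_n\}}$ (using that $D\subseteq S$ for every $S\in\modelX$), so each $\curlySOf{A}$ is a basic open set of $\modelX$. For the reverse comparison of bases, fix a basic open $U=\set{S\in\modelX}{S\supseteq H}$ with $H$ finite and a point $R\in U$. Since $\modelX$ is a model, write $\modelX=\bigcup_{\eps\in\Lambda}\curlySOf{D_\eps}$ with each $D_\eps$ affine over $D$; then each $D_\eps\in\affinesInX$, and we may choose $\eps$ and a prime $P$ of $D_\eps$ with $R=(D_\eps)_P$. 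Writing each $h\in H$ as $h=a_h/s_h$ with $a_h\in D_\eps$ and $s_h\in D_\eps\setminus P$, and setting $s_0=\prod_h s_h\in D_\eps\setminus P$, the ring $A:=D_\eps[1/s_0]$ is affine over $D$ and satisfies $\curlySOf{A}=\set{(D_\eps)_Q}{Q\text{ prime of }D_\eps,\ s_0\notin Q}\subseteq\curlySOf{D_\eps}\subseteq\modelX$, so $A\in\affinesInX$; moreover $R\in\curlySOf{A}$ since $s_0\notin P$, and $H\subseteq A\subseteq S$ for every $S\in\curlySOf{A}$, so $\curlySOf{A}\subseteq U$. Hence $\set{\curlySOf{A}}{A\in\affinesInX}$ is a basis of $\modelX$.

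For (iii), I would first check that $C:=\set{S\in\modelX}{S\subseteq R}$ is closed by exhibiting its complement as the union of the basic open sets $\set{S\in\modelX}{x\in S}$ taken over $x\in K\setminus R$ (indeed $S\not\subseteq R$ precisely when $S$ contains some $x\in K\setminus R$). Since $R\in C$ and $C$ is closed, $\overline{\{R\}}\subseteq C$; conversely, if $S\subseteq R$ and $U=\set{S'\in\modelX}{S'\supseteq H}$ is any basic open containing $S$, then $H\subseteq S\subseteq R$, so $R\in U$, which shows every neighborhood of $S$ meets $\{R\}$ and hence $S\in\overline{\{R\}}$. Therefore $\overline{\{R\}}=C=\set{S\in\modelX}{S\subseteq R}$. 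Finally, since $\curlySOf{D_\eps}\subseteq\modelX\subseteq\localDomains$, the fraction field $(D_\eps)_{(0)}$ of $D_\eps$ belongs to $\localDomains$ and hence equals $K$, so $K=(D_\eps)_{(0)}\in\curlySOf{D_\eps}\subseteq\modelX$; as every $S\in\modelX$ is a subring of $K$, the closure formula gives $\overline{\{K\}}=\set{S\in\modelX}{S\subseteq K}=\modelX$, so $K$ is a generic point of $\modelX$, and it is the unique one because distinct points of $\modelX$ have distinct closures by that same formula.

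I expect the main obstacle to be the domination verification in (i) together with the correct use of the local-domain form of irredundancy; once that is in place, (ii) and (iii) are largely formal, the only substantive idea being to clear the denominators of the finite set $H$ using a single element of a defining affine ring $D_\eps$, so that the resulting principal localization $D_\eps[1/s_0]$ keeps its spectrum inside the piece $\curlySOf{D_\eps}\subseteq\modelX$.
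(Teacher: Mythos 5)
Your proof is correct. Note that the paper itself gives no proof of this proposition — it is quoted from Abhyankar \cite[6.2]{Abh_resolutionBook} — so there is no internal argument to compare against; your argument is the standard one (domination of $A_{M_R\cap A}$ by $R$ plus irredundancy for (i), clearing denominators of the finite set $H$ via a principal localization $D_\eps[1/s_0]$ for (ii), and the closure computation for (iii)), and all the steps, including the use of the local-domain form of irredundancy that the paper's definition explicitly licenses, check out.
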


As an immediate consequence, we see that a model is a quasi-separated topological space. The following corollary is implicitly expressed in both \cite{ZariskiSamuel2} and \cite{Abh_resolutionBook}. It deserves a separate statement, as it will be useful for observing that the scheme structure of a model is separated. 
\begin{cor} \label{p. intersection of affines is affine of models}
	For a model \( \modelX \), the set \( \set{\curlySOf{A}}{A\in \affinesInX} \) is closed under finite intersections. Moreover, \( \modelX \) is a quasi-separated topological space; i.e. an intersection of two quasi-compact open sets is quasi-compact.
\end{cor}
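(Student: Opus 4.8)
The plan is to prove the two assertions in sequence, using the description of basic opens and closures from Proposition \ref{lem affines are open}. For the first assertion, I would take $A, B \in \affinesInX$ and show $\curlySOf{A} \cap \curlySOf{B} = \curlySOf{C}$ for some $C \in \affinesInX$. The natural candidate is $C = A[B] = AB$, the affine ring over $D$ generated by $A$ and $B$ inside $K$ (this is again finitely generated as a $D$-algebra, hence an affine ring over $D$). Concretely, if $R \in \curlySOf{A} \cap \curlySOf{B}$, then $A \subseteq R$ and $B \subseteq R$, so $C = A[B] \subseteq R$; since $R \in \modelX$ and $C \in \affinesInX$ — which I must check, see below — item \eqref{item1 in lem affines are open} gives $R \in \curlySOf{C}$. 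Conversely, if $R \in \curlySOf{C}$ then $C \subseteq R$, and since $A \subseteq C$ and $B \subseteq C$ we get $A \subseteq R$ and $B \subseteq R$, so applying item \eqref{item1 in lem affines are open} again (using that $A, B \in \affinesInX$) yields $R \in \curlySOf{A} \cap \curlySOf{B}$. Thus the only real content is to verify $C \in \affinesInX$, i.e. that $\curlySOf{C} \subseteq \modelX$: but $\curlySOf{C} \subseteq \curlySOf{A} \subseteq \modelX$ follows from the forward inclusion just established (every $R \in \curlySOf{C}$ contains $A$, hence lies in $\curlySOf{A}$ by item \eqref{item1 in lem affines are open}), so this is not circular provided we order the argument correctly: first show $\curlySOf{C} \subseteq \curlySOf{A}$ directly from item \eqref{item1 in lem affines are open} (which only needs $A \in \affinesInX$), conclude $C \in \affinesInX$, and then run the double inclusion.

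For the second assertion, I would first record that each $\curlySOf{A}$ with $A \in \affinesInX$ is quasi-compact: indeed $\curlySOf{A}$ is homeomorphic to $\Spec A$ under the identification $A_P \leftrightarrow P$, and the prime spectrum of any ring is quasi-compact. Next, by item \eqref{item2 in lem affines are open} the sets $\curlySOf{A}$, $A \in \affinesInX$, form a basis of $\modelX$, so any quasi-compact open $U \subseteq \modelX$ is a finite union $U = \curlySOf{A_1} \cup \dots \cup \curlySOf{A_n}$ of basic opens. Given two quasi-compact opens $U = \bigcup_{i} \curlySOf{A_i}$ and $W = \bigcup_{j} \curlySOf{B_j}$, we have $U \cap W = \bigcup_{i,j} \left( \curlySOf{A_i} \cap \curlySOf{B_j} \right)$, and by the first assertion each $\curlySOf{A_i} \cap \curlySOf{B_j} = \curlySOf{C_{ij}}$ is a basic (hence quasi-compact) open; a finite union of quasi-compact spaces is quasi-compact, so $U \cap W$ is quasi-compact. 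This establishes quasi-separatedness.

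I expect the main obstacle — really the only non-formal point — to be confirming that $A[B]$ is a legitimate member of $\affinesInX$, which decomposes into two checks: that $A[B]$ is finitely generated as a $D$-algebra (clear, since generators of $A$ over $D$ together with generators of $B$ over $D$ generate $A[B]$ over $D$, and all share the fraction field consideration from the model hypothesis), and that $\curlySOf{A[B]} \subseteq \modelX$, handled by the ordering trick above. A minor subtlety worth stating explicitly is that $A[B] \subseteq K$ automatically has the right ambient field, and that we are using $\modelX$ being a model (not merely a premodel) only through Proposition \ref{lem affines are open}, whose hypotheses are already in force. Everything else is a bookkeeping exercise with finite unions and the basis property.
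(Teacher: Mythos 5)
Your overall route is the same as the paper's: set \( C=A[B] \), identify \( \curlySOf{A}\cap\curlySOf{B} \) with \( \curlySOf{C} \) via Proposition~\ref{lem affines are open}~\eqref{item1 in lem affines are open}, and then get quasi-separatedness by writing quasi-compact opens as finite unions of basic opens and distributing the intersection. The second half of your argument is correct and is exactly the paper's.

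The gap is in the one step you rightly single out as the real content, namely \( C=A[B]\in\affinesInX \). Your ``ordering trick'' is circular: item~\eqref{item1 in lem affines are open} has \emph{two} hypotheses, \( A\in\affinesInX \) \emph{and} \( R\in\modelX \), so you cannot apply it to an arbitrary \( R\in\curlySOf{C} \) to conclude \( R\in\curlySOf{A} \) --- membership of such \( R \) in \( \modelX \) is precisely what the claim \( \curlySOf{C}\subseteq\modelX \) is supposed to deliver. Worse, the inclusion \( \curlySOf{C}\subseteq\curlySOf{A} \) you want to establish ``directly'' is false for general affine rings \( A\subseteq C \) with fraction field \( K \): for \( A=\complexC[x,y] \) and \( C=A[x/y] \), the localization of \( C \) at \( \gendby{x/y,\,y} \) contains \( x/y \) and is not a localization of \( A \). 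What saves the statement is irredundancy of the model, which your proposal never invokes: given \( S=(A[B])_Q\in\curlySOf{A[B]} \), the ring \( S \) dominates both \( A_{Q\cap A}\in\curlySOf{A}\subseteq\modelX \) and \( B_{Q\cap B}\in\curlySOf{B}\subseteq\modelX \), so by irredundancy these coincide, say both equal \( R \); then \( A[B]\subseteq R\subseteq S \) and \( M_R\cap A[B]=M_S\cap A[B]=Q \) force \( S=(A[B])_Q\subseteq R \), hence \( S=R\in\modelX \). With \( A[B]\in\affinesInX \) secured this way, your double inclusion (and the paper's one-line version of it) goes through; the paper's own proof is admittedly terse on this same point, but the specific justification you offer does not close it.
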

\begin{proof}
	Fix \( A,\,B \in \affinesInX \). Then, the ring \( A[B]=B[A] \), the smallest subring of \( K \) that contains both \( A \) and \( B \), is also affine over \( D \). So, a ring \( R \) contains both \( A \) and \( B \) if and only if it contains \( A[B] \). Hence, from Proposition~\ref{lem affines are open}~\eqref{item1 in lem affines are open} the equality \( \curlySOf{A} \cap \curlySOf{B} = \curlySOf{A[B]} \) holds. By induction, we conclude that the given set is closed under finite intersections.
	
	Now, take two quasi-compact open subsets \( U \), \( V \) of \( \modelX \). So, they can be expressed as finite unions \( U=\bigcup_{i=1}^n \curlySOf{A_i} \) and \( V= \bigcup_{j=1}^m\curlySOf{B_j} \) where \( A_i \) and \( B_j \)'s are from \( \affinesInX \). 
	Then, we have
	\begin{equation*}
		U \cap V = \bigcup_{i=1}^n \bigcup_{j=1}^m \Big( \curlySOf{A_i}  \cap \curlySOf{B_j}   \Big) = \bigcup_{i=1}^n \bigcup_{j=1}^m \curlySOf{A_i[B_j]}  
	\end{equation*}
	which yields the desired quasi-compactness of \( U\cap V  \).
\end{proof}	

\begin{definition}  \label{def. domination}
	Let \( \modelX \) and \( \modelY \)  be premodels over \( D \).
	\begin{itemize}
		\item If every element in \( \modelX \) dominates an element of \( \modelY \),  then we say that	\( \modelX \) dominates \( \modelY \).  In this case, we denote \( \modelY \dominatedby \modelX \).
		\item We say that \( \modelX \) properly dominates \( \modelY \)  if  \( \modelY \dominatedby \modelX \) and every element of \( \modelY \)  is dominated by an element of \( \modelX \).
		\item If \( \modelY \)  is irredundant (e.g. a model over \( D \)), then \( \modelY \dominatedby \modelX \) if and only if every element \( R \) from \( \modelX \) dominates a unique element from \( \modelY \). We call this unique element the center of \( R \).
		\item Suppose \( \modelY \)  is irredundant and \( \modelY \dominatedby \modelX \). The map \( \centerFrom{\modelX}{\modelY} \) that sends an element of \( \modelX \) to its unique center is called the domination map from \( \modelX \) to \( \modelY \). 
	\end{itemize}
\end{definition}
Note that the domination relation is a preorder (i.e. reflexive and transitive relation) on the power set of \( \localDomains \); however, it is not antisymmetric. For instance, let \( \modelX = \curlySOf{D}\cup \curlySOf{D[x/y]}  \) as in Example~\ref{ex. non-irredundant premodel}. Then, \( \modelX \dominatedby \curlySOf{D} \) and \( \curlySOf{D} \dominatedby \modelX \) but \( \curlySOf{D} \subsetneq \modelX \).

The domination relation reverses the inclusion. For two models \( \modelX, \, \modelY \) over \( D \), if \( \modelX \subseteq \modelY \) as sets, then \( \modelY \dominatedby \modelX \). Moreover, in the case  \( \modelX \subseteq \modelY \), the domination map \( \centerFrom{\modelX}{\modelY}  \) is the inclusion map. 

\begin{prop} \cite[Lemmas 3 \& 5 in \S 17]{ZariskiSamuel2}
	Let \( \modelX \) be a premodel and \( \modelY \)  be a model over \( D \). Suppose \( \modelX \) dominates \( \modelY \). Then the domination map \( \centerFrom{\modelX}{\modelY} \colon \modelX \to \modelY \) is a continuous map. Moreover, if both \( \modelX \), \( \modelY \) are complete models, then \( \centerFrom{\modelX}{\modelY} \) is closed. 
\end{prop}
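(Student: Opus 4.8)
This is essentially \cite[Lemmas~3 \& 5 in \S 17]{ZariskiSamuel2}; I would handle continuity and closedness separately, in each case reducing to the basic affine open sets of $\modelY$. Write $\centermap:=\centerFrom{\modelX}{\modelY}$ for short.

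\emph{Continuity.} By Proposition~\ref{lem affines are open}~\eqref{item2 in lem affines are open} the sets $\curlySOf{A}$, with $A$ an affine ring over $D$ such that $\curlySOf{A}\subseteq\modelY$, form a basis of $\modelY$, so it suffices to show each $\centermap\inverse(\curlySOf{A})$ is open in $\modelX$. The crux is the description
\begin{equation*}
	\centermap\inverse\!\left(\curlySOf{A}\right)=\set{R\in\modelX}{A\subseteq R}.
\end{equation*}
Writing $A=D[x_1,\dotsc,x_n]$, the right-hand side equals $\set{R\in\modelX}{x_1,\dotsc,x_n\in R}$, a basic open of $\modelX$, so this identity finishes continuity. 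The inclusion ``$\subseteq$'' is immediate: if $\centermap(R)\in\curlySOf{A}$ then $A\subseteq\centermap(R)$ by Proposition~\ref{lem affines are open}~\eqref{item1 in lem affines are open}, and $\centermap(R)\subseteq R$ because $R$ dominates its center, so $A\subseteq R$. For ``$\supseteq$'', given $R\in\modelX$ with $A\subseteq R$, set $P=M_R\cap A$; every element of $A\setminus P$ is a unit of $R$, so $A_P\subseteq R$, and the proper ideal $M_R\cap A_P$ contains the maximal ideal $PA_P$ and hence equals it, so $R$ dominates $A_P\in\curlySOf{A}\subseteq\modelY$. Since $\modelY$ is irredundant and $R$ also dominates $\centermap(R)\in\modelY$, it follows that $\centermap(R)=A_P\in\curlySOf{A}$.

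\emph{Closedness.} Now assume $\modelX$ and $\modelY$ are both complete. A map $f\colon Z\to\modelY$ is closed once each restriction $f\inverse(V)\to V$ is closed for $V$ in some open cover of $\modelY$, so, using the affine basis above, it suffices to show that the restriction of $\centermap$ to $\centermap\inverse(\curlySOf{A})$ is a closed map onto $\curlySOf{A}=\Spec A$ for each affine $A$ over $D$ with $\curlySOf{A}\subseteq\modelY$. First I would check that $\centermap\inverse(\curlySOf{A})$ is a complete model over $A$: it is an irredundant finite union of spectra of affine rings over $A$ (read off from $\modelX=\bigcup_\eps\curlySOf{D_\eps}$), and any valuation ring $V$ of $K$ with $A\subseteq V$ dominates some $R\in\modelX$ by completeness of $\modelX$ while also dominating $A_{M_V\cap A}\in\curlySOf{A}\subseteq\modelY$; irredundancy of $\modelY$ then forces $\centermap(R)=A_{M_V\cap A}$, i.e.\ $R\in\centermap\inverse(\curlySOf{A})$, and the restricted map is the domination map of this submodel onto $\Spec A$. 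We are thus reduced to the case $\modelY=\Spec A$ with $\modelX$ a complete model over $A$; here the mechanism is the valuative criterion: given a closed set $C\subseteq\modelX$ and a prime $Q\ideal A$ lying in the closure of $\centermap(C)$ in $\Spec A$, one produces $R\in C$ with $M_R\cap A=Q$ by choosing a valuation ring $V$ of $K$ with $M_V\cap A=Q$ whose center on $\modelX$ (which exists because $\modelX$ is complete) lies in $C$ — the last point using that $C$ is closed and hence absorbs the specializations produced by passing to such a $V$.

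I expect this final step to be the main obstacle: selecting, for a prescribed prime $Q$ in the closure of $\centermap(C)$, a valuation ring over $Q$ whose center on $\modelX$ is forced into the given closed set $C$ rather than landing elsewhere in $\modelX$. Everything else — the two inclusions in the continuity part, the identity for $\centermap\inverse(\curlySOf{A})$, and the reduction to an affine target together with the completeness of $\centermap\inverse(\curlySOf{A})$ over $A$ — is routine manipulation of the domination relation and the Zariski topology of models; the completeness hypotheses are used only, but essentially, to guarantee that the relevant valuations have centers on $\modelX$ and that those centers can be steered into the closed set. If the valuation bookkeeping in the last step becomes cumbersome, one can instead invoke \cite[\S 17]{ZariskiSamuel2} directly.
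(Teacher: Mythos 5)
The paper gives no proof of this proposition --- it is quoted verbatim from \cite[\S 17]{ZariskiSamuel2} --- so your argument can only be measured against the standard one. Your continuity half is correct and complete: the identity $\centerFrom{\modelX}{\modelY}{}\inverse\big(\curlySOf{A}\big)=\set{R\in\modelX}{A\subseteq R}$, proved exactly as you do via $P=M_R\cap A$ and the irredundancy of $\modelY$, exhibits the preimage of each basic affine open of $\modelY$ as a basic open of the premodel topology on $\modelX$. The reduction of closedness to an affine target and the verification that $\centermap\inverse\big(\curlySOf{A}\big)$ is a complete model over $A$ are also sound (the identity $\centermap\inverse\big(\curlySOf{A}\big)\cap\curlySOf{D_{\eps}}=\curlySOf{D_{\eps}[A]}$ you need there is the same computation as in Corollary~\ref{p. intersection of affines is affine of models}).

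The final step, which you flag yourself, is a genuine gap, and it is the entire content of Lemma~5 of \cite[\S 17]{ZariskiSamuel2}. Two ingredients are missing. First, the device that steers the valuation into $C$ is \emph{composition of valuations}: given $R_0\in C$ with $M_{R_0}\cap A=P$ and a prime $Q\supseteq P$ of $A$, choose a valuation ring $W$ of $K$ dominating $R_0$ (so $M_W\cap A=P$ and $A_Q\subseteq A_P\subseteq W$), then pull back along $W\to W/M_W$ a valuation ring of the residue field dominating the image of $A_Q$; the composite $V\subseteq W$ satisfies $M_V\cap A=Q$, and its center $R_1$ on $\modelX$ lies in $\curlySOf{D_{\eps}}$ for some $D_{\eps}\in\affinesInX$ with $D_{\eps}\subseteq R_1\subseteq V\subseteq W$, which forces $R_0=(D_{\eps})_{M_W\cap D_{\eps}}$ by irredundancy and hence $R_1=(D_{\eps})_{M_V\cap D_{\eps}}\subseteq R_0$ because $M_W\subseteq M_V$; thus $R_1\in\overline{\{R_0\}}\subseteq C$ by Proposition~\ref{lem affines are open}. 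Without this composition there is no reason a valuation centered at $Q$ should have its center on $\modelX$ anywhere near $C$. Second, this argument only shows that $\centermap(C)$ is stable under specialization, whereas you start from an arbitrary $Q$ in the closure of $\centermap(C)$: in $\Spec A$ a specialization-stable set need not be closed, and the closure of a set need not be its specialization closure. Zariski--Samuel dispose of this by Noetherianity ($C$ is a finite union of irreducible closed sets $\overline{\{R_i\}}$, and the above yields $\centermap\big(\overline{\{R_i\}}\big)=\overline{\{\centermap(R_i)\}}$); in the non-Noetherian setting of this paper one must instead note that $C$ is quasi-compact and $\centermap$ is a spectral map of spectral spaces, so $\centermap(C)$ is pro-constructible and specialization-stable, hence closed. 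As written, your proposal establishes continuity but not closedness.
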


\subsection{Ideals on Models} \label{Subsec. Abhy ideals}
In \cite{Abh_resolutionBook}, \Abhy{} defines the notions of a preideal and an ideal on a given model over a Noetherian domain. Both definitions make sense without the Noetherianity assumption.

\begin{definition}  
	For a premodel \( \modelX \subseteq \localDomains \), a function 
	\begin{equation*}
		\abyideal : \modelX \longrightarrow \bigcup_{R \in \modelX} \{ \text{ideals of } R \}
	\end{equation*}
	is called a preideal on \( \modelX \) if for each \( R \in \modelX \) its image \( \abyideal(R) \) is an ideal of \( R \). 
\end{definition}
We write \( \abyideal R \)  instead of \( \abyideal (R) \).

\begin{notation} 
	Let \( \abyideal \)  be a preideal on a premodel \( \modelX \subseteq \localDomains \). For a subring \( A \) of \( K \) such that \( \curlySOf{A} \subseteq \modelX \), we denote the intersection \( \bigcap_{R \in \curlySOf{A}} \abyideal R \) by \( A\cap \abyideal \).
\end{notation}

\begin{definition} 
	Let \( \modelX \) be a model over an integral domain \( D \). A preideal \( \abyideal \)  on \( \modelX \) is called an ideal if the equality \( \abyideal R = \left( A \cap \abyideal \right) R \) holds for any \( A \in \affinesInX \) and \( R\in \curlySOf{A} \).	
\end{definition}
In \cite{Abh_resolutionBook}, the following proposition is proved under the assumption \( D \) is Noetherian. When \( D \) is Noetherian, a model \( \modelX \) over \( D \) is a Noetherian topological space with respect to Zariski topology. That is, all open subsets of \( \modelX \) are quasi-compact. However, the proof only uses the quasi-compactness of the open sets of the form \( \curlySOf{A} \) for \( A\in \affinesInX \). As the spectrum of any ring is quasi-compact, the proof is still valid when \( D \) is a non-Noetherian domain.

\begin{prop} \cite[6.4.3]{Abh_resolutionBook} \label{equiv def of aby ideal} 
	Let \( \modelX \) be a model over an integral domain \( D \). 
	Then, for a preideal \( \abyideal \)  on \( \modelX \) the following are equivalent. 
	\begin{enumerate}[(i)]
		\item \( \abyideal \)  is an ideal on \( \modelX \).
		\item For all \( S\in \modelX \), there exists \( A \in \affinesInX \)  such that \( S\in \curlySOf{A} \) and for every \( R\in \curlySOf{A} \) the equality  \( \abyideal R = ( A \cap \abyideal ) R \) holds.	\label{equiv def for aby ideal condition 2}
	\end{enumerate}
\end{prop}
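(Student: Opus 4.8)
The implication (i)$\Rightarrow$(ii) is immediate: if $\abyideal$ is an ideal on $\modelX$ and $S\in\modelX$, then $S$ lies in some $\curlySOf{D_\eps}$ from a defining family of $\modelX$, and $D_\eps\in\affinesInX$, so the defining equality $\abyideal R=(D_\eps\cap\abyideal)R$ for $R\in\curlySOf{D_\eps}$ is exactly what (ii) asks for with $A=D_\eps$. The substance is (ii)$\Rightarrow$(i), so I would fix $B\in\affinesInX$ and $R\in\curlySOf{B}$ and aim for $\abyideal R=(B\cap\abyideal)R$. One inclusion costs nothing: $B\cap\abyideal=\bigcap_{R'\in\curlySOf{B}}\abyideal R'$ is contained in the ideal $\abyideal R$ of $R$, so $(B\cap\abyideal)R\subseteq\abyideal R$. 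The plan for the reverse inclusion is to localize the hypothesis onto a finite affine cover of $\curlySOf{B}$ and then glue; it is convenient to establish $\abyideal R=(B\cap\abyideal)R$ for all $R\in\curlySOf{B}$ at once.

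First I would pass to distinguished affine opens. For each $S\in\curlySOf{B}$, hypothesis (ii) gives $A_S\in\affinesInX$ with $S\in\curlySOf{A_S}$ and $\abyideal R'=(A_S\cap\abyideal)R'$ throughout $\curlySOf{A_S}$. Since $\curlySOf{A_S}\cap\curlySOf{B}$ is an open neighbourhood of $S$ in $\curlySOf{B}$ (Corollary~\ref{p. intersection of affines is affine of models}) and the sets $\curlySOf{B[1/f]}$ form a basis of $\curlySOf{B}$ (Proposition~\ref{lem affines are open} together with the identification $\curlySOf{B}\cong\Spec B$), there is $f=f_S\in B$ with $S\in\curlySOf{B[1/f]}\subseteq\curlySOf{A_S}$. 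From $\curlySOf{B[1/f]}\subseteq\curlySOf{A_S}$ every localization of the domain $B[1/f]$ contains $A_S$, and since a domain is the intersection of its localizations at all primes, $A_S\subseteq B[1/f]$; hence $A_S\cap\abyideal\subseteq B[1/f]\cap\abyideal$, and squeezing between $\abyideal R'=(A_S\cap\abyideal)R'$ and $\abyideal R'$ gives $\abyideal R'=(B[1/f]\cap\abyideal)R'$ for all $R'\in\curlySOf{B[1/f]}$. Because $\curlySOf{B}\cong\Spec B$ is quasi-compact, finitely many such basic opens cover it: $\curlySOf{B}=\bigcup_{i=1}^{n}\curlySOf{B_i}$ with $B_i=B[1/f_i]$, so that $(f_1,\dots,f_n)=B$, and with $\abyideal R'=(B_i\cap\abyideal)R'$ on each $\curlySOf{B_i}$.

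The crux is the gluing. Set $J=B\cap\abyideal$ and $J_i=B_i\cap\abyideal$, ideals of $B$ and $B_i$ respectively. On the overlap $\curlySOf{B_i}\cap\curlySOf{B_j}=\curlySOf{B[1/f_if_j]}$ both $J_i$ and $J_j$ compute $\abyideal$, so intersecting $\abyideal R'=J_iR'=J_jR'$ over $R'\in\curlySOf{B[1/f_if_j]}$ and again using that an ideal of a domain is the intersection of its localizations at all primes yields $J_i\,B[1/f_if_j]=J_j\,B[1/f_if_j]$. I would then prove $J B_i=J_i$ for every $i$. The inclusion $JB_i\subseteq J_i$ is clear from $J\subseteq J_i$. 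For the reverse, let $y\in J_i$; for each $j$ the overlap identity puts $y$ into $J_j\,B[1/f_if_j]$, hence $f_i^{N_j}y\in J_j$ for some $N_j\ge 0$, and since $j$ ranges over the finite set $\{1,\dots,n\}$ we may set $N=\max_j N_j$. Now given any prime $Q$ of $B$, choose $j$ with $f_j\notin Q$ (possible since $(f_1,\dots,f_n)=B$); then $\abyideal B_Q=J_jB_Q$, while $f_i^N y=f_i^{N-N_j}(f_i^{N_j}y)\in J_j\subseteq J_jB_Q$. As $Q$ was arbitrary, $f_i^N y\in\bigcap_{Q}\abyideal B_Q=B\cap\abyideal=J$, so $y\in JB_i$. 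Finally, for an arbitrary $R=B_Q\in\curlySOf{B}$, picking $i$ with $f_i\notin Q$ gives $\abyideal R=J_iR=(JB_i)R=JR=(B\cap\abyideal)R$, which completes (ii)$\Rightarrow$(i).

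The step I expect to be the main obstacle is this last gluing: one has a family of ``local'' ideals $J_i$ that agree on overlaps and must check that it descends to a single genuine ideal $J=B\cap\abyideal$ of $B$ with $JB_i=J_i$. The delicate point is obtaining a \emph{uniform} bound $N$ on the powers of $f_i$ needed to push an element of $J_i$ into $J$, and that is exactly where the finiteness of the cover — quasi-compactness of $\curlySOf{B}$ — is used essentially. The remaining ingredients are routine commutative algebra: that a domain, and each of its ideals, is the intersection of the corresponding localizations at all prime ideals, that the distinguished opens form a basis of $\curlySOf{B}\cong\Spec B$, and that forming colons/localizing behaves well; none of this needs Noetherianity of $D$.
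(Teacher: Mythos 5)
Your proof is correct, but note that the paper does not give a proof of this proposition at all: it is quoted from Abhyankar's book, accompanied only by the remark that the Noetherian hypothesis there can be dropped because the cited argument uses nothing beyond quasi-compactness of the affine opens \( \curlySOf{A} \). Your write-up therefore supplies an actual argument where the paper supplies a citation, and it is reassuring that quasi-compactness of \( \Spec B \) enters your proof in exactly the way that remark predicts: first to refine the neighbourhoods \( \curlySOf{A_S} \) furnished by (ii) to a finite cover of \( \curlySOf{B} \) by distinguished opens \( \curlySOf{B[1/f_i]} \), and then to extract the uniform exponent \( N \) with \( f_i^N y \in B\cap\abyideal \). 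The individual steps all check out: the reduction from \( A_S \) to \( B[1/f_S] \) via \( A_S \subseteq B[1/f_S] \) and the squeeze \( (A_S\cap\abyideal)R' \subseteq (B[1/f_S]\cap\abyideal)R' \subseteq \abyideal R' \); the identification of \( \bigcap_{R'\in\curlySOf{C}} J_i R' \) with \( J_i C \) on the overlaps; and the descent of the compatible family \( \{J_i\} \) to \( J = B\cap\abyideal \) with \( JB_i = J_i \). The only background facts you invoke --- that a domain and each of its ideals is the intersection of its localizations at all primes, and that the \( \curlySOf{B[1/f]} \) form a basis of \( \curlySOf{B}\cong\Spec B \) --- hold without Noetherianity, so the argument is valid in the generality the paper needs.
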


By subtly modifying the preceding proposition, we see that an ideal \( \abyideal \)  on a model \( \modelX \) is uniquely determined by the ideals \( D_1 \cap \abyideal \ideal D_1 , \dotsc ,  D_n \cap \abyideal \ideal D_n \) where \( \modelX \) is defined by the integral domains \( D_1 , \dotsc ,  D_n \).
\begin{prop} \label{aby ideals are uniquely determined by ideals on affine pieces}
	Let \( \modelX \) be a model over an integral domain \( D \) and \( \abyideal \)  be a preideal on \( \modelX \). Assume that \( \modelX \) is defined by \( \big(D_{\eps}\big)_{\eps \in \Lambda } \subseteq \affinesInX \). Then, \( \abyideal \)  is an ideal on \( \modelX \) if and only if for every \( \eps \in \Lambda  \) and \( R\in \curlySOf{D_{\eps}} \) the equality  \( \abyideal R = (D_{\eps} \cap \abyideal) R \) holds.	
\end{prop}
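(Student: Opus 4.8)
The plan is to deduce both implications from Proposition~\ref{equiv def of aby ideal}, viewing the hypothesis of the present statement as a mild strengthening of condition~\eqref{equiv def for aby ideal condition 2} in which the witnessing affine ring has been pinned down in advance as a member of the defining family.

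The forward implication is immediate. If $\abyideal$ is an ideal on $\modelX$, then by definition $\abyideal R = (A \cap \abyideal) R$ for every $A \in \affinesInX$ and every $R \in \curlySOf{A}$. Since the phrase ``$\modelX$ is defined by $\big(D_{\eps}\big)_{\eps \in \Lambda}$'' includes the requirement that each $D_{\eps}$ lie in $\affinesInX$, we may apply this with $A = D_{\eps}$ to obtain $\abyideal R = (D_{\eps} \cap \abyideal) R$ for every $\eps \in \Lambda$ and every $R \in \curlySOf{D_{\eps}}$.

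For the converse, assume $\abyideal R = (D_{\eps} \cap \abyideal) R$ holds for all $\eps \in \Lambda$ and all $R \in \curlySOf{D_{\eps}}$. I would verify condition~\eqref{equiv def for aby ideal condition 2} of Proposition~\ref{equiv def of aby ideal}. Fix $S \in \modelX$. Because ``$\modelX$ is defined by $\big(D_{\eps}\big)_{\eps \in \Lambda}$'' also means $\modelX = \bigcup_{\eps \in \Lambda} \curlySOf{D_{\eps}}$, there is some $\eps_0 \in \Lambda$ with $S \in \curlySOf{D_{\eps_0}}$. Take $A = D_{\eps_0} \in \affinesInX$; then $S \in \curlySOf{A}$, and by hypothesis $\abyideal R = (A \cap \abyideal) R$ for every $R \in \curlySOf{A}$. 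Thus condition~\eqref{equiv def for aby ideal condition 2} is met for this $S$, and since $S$ was arbitrary, Proposition~\ref{equiv def of aby ideal} gives that $\abyideal$ is an ideal on $\modelX$.

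I do not expect a genuine obstacle here; the argument is a direct transfer through Proposition~\ref{equiv def of aby ideal}. The only points requiring care are bookkeeping ones: that the defining family actually consists of rings in $\affinesInX$, so that $A = D_{\eps_0}$ is an admissible choice in the criterion, and that the $\curlySOf{D_{\eps}}$ cover $\modelX$ — both of which are part of the meaning of ``defined by''. In particular, finiteness of $\Lambda$ is never used.
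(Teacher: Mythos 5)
Your proposal is correct and follows exactly the paper's argument: the forward direction is immediate from the definition of an ideal applied with $A = D_{\eps}$, and the converse verifies condition~\eqref{equiv def for aby ideal condition 2} of Proposition~\ref{equiv def of aby ideal} by choosing a member of the defining family containing the given $S$. Nothing further is needed.
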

\begin{proof}
	The necessity direction comes from the definition. For sufficiency, we note that for \( S\in \modelX \), choosing a \( D_{\eps} \) with \( S \in \curlySOf{D_{\eps}} \) satisfies Proposition~\ref{equiv def of aby ideal} \eqref{equiv def for aby ideal condition 2}.
\end{proof}
To put it in another way, for a model \( \modelX \) defined by \( \big\{ D_{\eps}\big\}_{\eps \in \Lambda } \subseteq \affinesInX \) and a family of ideals \( \big\{I_{\eps}\big\}_{\eps \in \Lambda } \) where \( I_{\eps} \ideal D_{\eps} \)  the proposition above tells us that the family \( \big\{I_{\eps}\big\} \) induces an ideal on \( \modelX \) if and only if for any \( R \in \modelX \) whenever \( R \) contains both \( D_{\eps} \), \( D_{\eps'} \), then the extended ideals \( I_{\eps}R \), \( I_{\eps'}R \) are equal. In particular, for an affine model \( \curlySOf{A} \), ideals on \( \curlySOf{A} \) can be identified with the actual ideals of \( A \).

\section{Models in Schemes Terminology} \label{Sec. models as schemes}
Recall that an abstract variety is an integral and separated scheme of finite type over an algebraically closed field. This section shows that models can be viewed as schemes, and they are generalizations of abstract varieties. The terminology related to schemes can be found in \citet{Hartshorne2010algebraic}. Occasionally, the theorems in~\cite{Hartshorne2010algebraic} are restricted to Noetherian rings. But we work in the framework of models over an integral domain \( D \) without assuming Noetherianity. For the employed theorems that need a more general setting, we refer the reader to~\citet{GortzAlgGeo2020}.

\begin{notation} \label{sheaf structure of a model}
	For a premodel \( \modelX \), we will denote by \( {\sheafO}_{\modelX} \) the sheaf of rings defined by 
	\begin{equation*}
		{\sheafO}_{\modelX} (U)  \coloneqq  \bigcap_{R \in \modelX} R  
	\end{equation*}
	where \( U  \) is an open subset of \( \modelX \). We take the restriction maps as inclusions (or the zero map when the codomain is the zero ring). Note that by convention, the empty intersection gives the zero ring. 
\end{notation}
A premodel \( \modelX \) can be viewed as a locally ringed space via the sheaf \( {\sheafO}_{\modelX} \). Moreover, for \( R\in \modelX \), the stalk \( \sheafO_{\modelX, R} \) at \( R \) is the local ring \( R \). We prove these remarks.
\begin{lemma} \label{premodel is a locally ringed space}
	Let \( \modelX \) be a premodel. Then \( (\modelX, {\sheafO}_{\modelX} ) \) is a locally ringed space.
\end{lemma}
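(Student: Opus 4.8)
The claim is that for any premodel $\modelX$, the pair $(\modelX, \sheafO_{\modelX})$ is a locally ringed space; in particular the stalk at each $R \in \modelX$ should be $R$ itself. The plan is to first verify that $\sheafO_{\modelX}$ as defined is genuinely a sheaf of rings, then compute its stalks, and finally observe that each stalk is local.

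\emph{Sheaf axioms.} First I would check that $\sheafO_{\modelX}$ is a presheaf: the restriction maps are inclusions $\bigcap_{R \in U} R \hookrightarrow \bigcap_{R \in V} R$ for $V \subseteq U$ (legitimate since a smaller index set yields a larger intersection), and functoriality of these inclusions is immediate; the convention that the empty intersection is the zero ring handles $U = \emptyset$. For the sheaf axioms, suppose $U = \bigcup_{\eps} U_{\eps}$ is an open cover. The separation axiom is trivial here because all the rings $\sheafO_{\modelX}(U_{\eps})$ are literally subrings of $K$ (or the zero ring) and the restriction maps are inclusions, so an element vanishing on each $U_{\eps}$ vanishes. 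For gluing: given $s_{\eps} \in \bigcap_{R \in U_{\eps}} R$ agreeing on overlaps — but again, since everything lives inside $K$, ``agreeing on overlaps'' forces all the $s_{\eps}$ to be one and the same element $s \in K$ (pick any $U_{\eps}$; then $s := s_{\eps}$ works provided $U_{\eps} \neq \emptyset$). It remains to see $s \in \bigcap_{R \in U} R$, i.e. $s \in R$ for every $R \in U$; but each such $R$ lies in some $U_{\eps}$, whence $s = s_{\eps} \in R$. This establishes the sheaf property.

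\emph{Stalks.} Next I would compute $\sheafO_{\modelX, R} = \varinjlim_{R \in U} \sheafO_{\modelX}(U)$ for a fixed $R \in \modelX$. Each $\sheafO_{\modelX}(U)$ with $R \in U$ is a subring of $R$ (since $R$ is one of the factors in the intersection), and all the transition maps are inclusions inside $R$, so the colimit is just the union $\bigcup_{R \in U} \sheafO_{\modelX}(U) \subseteq R$. I claim this union is all of $R$. Indeed, take $x \in R$, $x \neq 0$; I want an open neighborhood $U \ni R$ on which $x$ is regular, i.e. $x \in S$ for all $S \in U$. The basic open set $U = \set{S \in \modelX}{S \supseteq \{x\}}$ from the definition of the Zariski topology on premodels does exactly this: it is open, it contains $R$ (as $x \in R$), and by construction every $S \in U$ contains $x$. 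Hence $x \in \sheafO_{\modelX}(U) \subseteq \sheafO_{\modelX, R}$. So $\sheafO_{\modelX, R} = R$.

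\emph{Conclusion.} Since each stalk equals $R$, a local ring by definition of premodel (elements of $\localDomains$ are local domains), $(\modelX, \sheafO_{\modelX})$ is a locally ringed space. I do not anticipate a genuine obstacle here; the one point requiring a little care is making the identification of the stalk with $R$ precise — specifically checking that every element of $R$ becomes ``regular'' on a suitable neighborhood, which is exactly what the basic opens $\set{S \in \modelX}{S \supseteq H}$ are designed to provide. The whole argument is soft and relies only on the fact that all rings in sight are concrete subrings of the common field $K$, so that restriction maps are inclusions and ``sections agreeing on overlaps'' are literally equal elements of $K$.
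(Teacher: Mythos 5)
Your proposal follows the same route as the paper's proof: restriction maps are inclusions, the stalk at $R$ is a subring of $R$, and the basic open set $U_r=\set{S\in\modelX}{S\ni r}$ shows every $r\in R$ lies in the stalk, so the stalk equals the local ring $R$. Your stalk computation and conclusion are fine and essentially identical to the paper's (you phrase it as a direct union, the paper by contradiction).

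The one step that deserves scrutiny is the gluing axiom, which you spell out and the paper dismisses with ``it is evident.'' Your claim that ``agreeing on overlaps forces all the $s_{\eps}$ to be one and the same element of $K$'' is only valid when the relevant overlaps $U_{\eps}\cap U_{\eps'}$ are nonempty; if an overlap is empty, both restrictions land in the zero ring and the compatibility condition is vacuous, so $s_{\eps}$ and $s_{\eps'}$ may be distinct elements of $K$. A general premodel can be reducible, and then gluing genuinely fails: take $D=\complexC$, $K=\complexC(x)$, and $\modelX=\{R_1,R_2\}$ with $R_1=\complexC[x]_{(x)}$ and $R_2=\complexC[1/x]_{(1/x)}$; the singletons $\{R_1\}=\set{S\in\modelX}{S\ni x}$ and $\{R_2\}=\set{S\in\modelX}{S\ni 1/x}$ are disjoint nonempty opens, and the sections $x$ and $1/x$ agree (vacuously) on the empty overlap but do not glue to an element of $R_1\cap R_2$. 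So the lemma as stated, for arbitrary premodels, is not quite right, and your argument inherits exactly the gap that the paper's one-line assertion hides. The statement is harmless in context because every \emph{model} contains $K$ as a generic point, so every nonempty open of a model contains $K$, any two nonempty opens meet, and your argument then goes through verbatim; but you should either add the hypothesis that the premodel is irreducible (or contains $K$), or restrict the lemma to models, before asserting the gluing axiom.
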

\begin{proof}
	First, we argue that \( {\sheafO}_{\modelX} \) is a sheaf. Note that for non-empty open sets \( V \subseteq U  \), we clearly have \( {\sheafO}_{\modelX} (U) \subseteq {\sheafO}_{\modelX} (V)  \). So, taking the restriction maps as inclusions makes sense. Since the restriction maps are inclusions, it is evident that the sheaf axioms are satisfied. 	
	Now, fix \( R\in \modelX \). We claim that the stalk \( \sheafO_{\modelX, R} \) is the local ring \( R \). For any open set \( U \) that contains \( R \), the ring \( {\sheafO}_{\modelX} (U) \) is a subring of \( R \). Therefore, the stalk \( \sheafO_{\modelX, R} \) is also a subring of \( R \). Note that for any \( r\in R \), since \( U_r = \set{S\in \modelX}{S \ni r} \) is an open set that contains \( R \), the integral domain \( {\sheafO}_{\modelX} (U_r) \) is a subring of the stalk \( \sheafO_{\modelX, R} \).
	Therefore, \( \sheafO_{\modelX, R} \) cannot be a proper subring of \( R \), because otherwise there would be \( r \in R \setminus \sheafO_{\modelX, R} \) causing \( {\sheafO}_{\modelX} (U_r) \nsubseteq \sheafO_{\modelX, R} \).
	So all stalks are local rings and hence \( (\modelX, {\sheafO}_{\modelX} ) \) is a locally ringed space.
\end{proof}

Using that a model is a union of affine models and is irredundant, we now conclude any model can be considered as a scheme. Despite the convention of denoting a scheme solely by its underlying topological space, we mostly maintain the tuple notation to clearly differentiate between the model itself and the corresponding ringed space. Still, we occasionally switch back to the conventional notation when it eases the notation without obscuring our perspective. 
\begin{prop}
	Let \( \modelX \) be a model over \( D \). Then, \( (\modelX , {\sheafO}_{\modelX} ) \) is a quasi-compact, quasi-separated and an integral scheme. 
\end{prop}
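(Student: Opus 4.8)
The plan is to verify the four properties — quasi-compact, quasi-separated, scheme, integral — more or less in that order, reusing what has already been set up. Quasi-compactness is immediate: by definition a model $\modelX$ has the form $\bigcup_{\eps \in \Lambda} \curlySOf{D_\eps}$ for a \emph{finite} family of affine rings, and each $\curlySOf{D_\eps}$, being (homeomorphic to) $\Spec D_\eps$, is quasi-compact; a finite union of quasi-compact sets is quasi-compact. Quasi-separatedness is exactly the content of Corollary~\ref{p. intersection of affines is affine of models}, so I would just cite it. (If desired, one also gets that the intersection of two of the pieces $\curlySOf{D_\eps} \cap \curlySOf{D_{\eps'}} = \curlySOf{D_\eps[D_{\eps'}]}$ is again affine, which is the cleaner statement and feeds directly into the separatedness discussion that follows the proposition.)

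For the scheme structure, I would first invoke Lemma~\ref{premodel is a locally ringed space}, which already tells us $(\modelX, \sheafO_\modelX)$ is a locally ringed space with stalk $R$ at each point $R$. It then remains to exhibit an open cover by affine schemes. The natural candidate cover is $\{\curlySOf{D_\eps}\}_{\eps \in \Lambda}$ together with the subbasis $\{\curlySOf{A} : A \in \affinesInX\}$ from Proposition~\ref{lem affines are open}\eqref{item2 in lem affines are open}. The key point to check is that for $A \in \affinesInX$, the locally ringed space $\big(\curlySOf{A}, \sheafO_\modelX|_{\curlySOf{A}}\big)$ is isomorphic to $\Spec A$. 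Topologically this is the identification $\curlySOf{A} \ni A_P \cong P \in \Spec A$ recalled in Subsection~\ref{Subsec. top on models}, which is a homeomorphism because the basic opens $\curlySOf{A[1/f]}$ match the distinguished opens $D(f)$. For the structure sheaf, one checks that on a distinguished open $\curlySOf{A[1/f]}$ we have $\sheafO_\modelX\big(\curlySOf{A[1/f]}\big) = \bigcap_{P \not\ni f} A_P = A[1/f]$ inside $K$; since $A$ is a domain with fraction field $K$, the intersection of all localizations of $A[1/f]$ at its primes is $A[1/f]$ itself (a domain equals the intersection of its localizations at all primes, indeed at all maximal ideals). This gives agreement of the two sheaves on a basis, hence a sheaf isomorphism, and therefore $\curlySOf{A}$ is an affine scheme. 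Thus $(\modelX, \sheafO_\modelX)$ is covered by open affine subschemes and is a scheme.

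Finally, integrality: a scheme is integral iff it is reduced and irreducible, equivalently iff it is nonempty and every ring of sections $\sheafO_\modelX(U)$ over a nonempty open $U$ is an integral domain. But by construction $\sheafO_\modelX(U) = \bigcap_{R \in U} R$ is a subring of $K$ (for $U$ nonempty), hence a domain; this gives reducedness, and irreducibility follows because $K$ is the generic point of $\modelX$ by Proposition~\ref{lem affines are open}\eqref{item3 in lem affines are open} — every nonempty open contains it, so any two nonempty opens meet. I expect the main obstacle to be the structure-sheaf half of the affineness claim: one must be careful that the abstract intersection $\bigcap_{R \in \curlySOf{A}} R$ defining $\sheafO_\modelX$ really reproduces $A$ (and $A[1/f]$ on distinguished opens) rather than something larger, which is where the hypothesis that $A$ is a domain with fraction field exactly $K$ — and the standard fact that a domain is the intersection of its localizations at maximal ideals — is doing the work. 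Everything else is bookkeeping with the finite cover and the already-established topological facts.
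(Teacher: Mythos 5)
Your proposal is correct and follows essentially the same route as the paper: quasi-compactness from the finite cover by the quasi-compact opens \( \curlySOf{D_{\eps}} \), quasi-separatedness by citing Corollary~\ref{p. intersection of affines is affine of models}, the scheme structure by combining Lemma~\ref{premodel is a locally ringed space} with the identification of each \( \big( \curlySOf{A}, {\sheafO}_{\modelX}\restriction_{\curlySOf{A}} \big) \) with \( \Spec A \), and integrality from the fact that every ring of sections over a nonempty open is a subring of \( K \). The only difference is one of detail: you spell out the verification that the restricted structure sheaf agrees with that of \( \Spec A \) on the distinguished opens \( \curlySOf{A[1/f]} \) (via \( \bigcap_{P \not\ni f} A_P = A[1/f] \)), a step the paper asserts more briefly.
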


\begin{proof}
	We first note that being quasi-compact and quasi-separated are topological properties, and Corollary~\ref{p. intersection of affines is affine of models} shows that models are quasi-separated. Moreover, \( \modelX \) can be written as a finite union of quasi-compact open sets since the spectrum of any ring is quasi-compact and \( \curlySOf{A} \) is open in \( \modelX \) for any \( A\in \affinesInX \);  thus, \( \modelX \) is quasi-compact. 
	
	As established in Lemma~\ref{premodel is a locally ringed space},  \( (\modelX, {\sheafO}_{\modelX} ) \) is a locally ringed space. Additionally, for any open set \( U \), by definition \( \sheafO(U) = \cap_{R\in U} R \). First, we assume \( \modelX \) is an affine model, say \( \modelX = \curlySOf{A} \) for an integral domain \( A \). Therefore, when we identify \( \curlySOf{A} \) with \( \Spec (A) \), the ring \( \sheafO(U)  \) is the intersection of the localizations at primes contained in \( U \). Hence, \( (\modelX, {\sheafO}_{\modelX} ) \) is the affine scheme of \( A \).
	
	For the general case, write \( \modelX = \bigcup \curlySOf{D_{\eps}} \) where \( D_{\eps} \) are affine rings over \( D \). We set \( U_{\eps} = \curlySOf{D_{\eps}} \) and consider the restricted sheaf \( {\sheafO}_{\modelX} \restriction_{U_{\eps}} \). From the affine case we know that the restricted ringed spaces \( (U_{\eps} , {\sheafO}_{\modelX}{\restriction_{U_{\eps}}}) \) are the affine schemes of the rings \( D_{\eps} \)'s, 
	and hence, \( (\modelX, {\sheafO}_{\modelX} ) \) is a scheme. Moreover, for any open \( U\subseteq \modelX \), the ring \( {\sheafO}_{\modelX} (U) \) is an integral domain as it is an intersection of integral domains. This demonstrates \( (\modelX, {\sheafO}_{\modelX} ) \)  is an integral scheme.
\end{proof}

\begin{remark}
	We can also construct \( ( \modelX, {\sheafO}_{\modelX} ) \) as a gluing of the affine schemes \( U_{\eps} = \curlySOf{ D_{\eps} } \) where \( \{D_{\eps}\}_{\eps=1}^n \) is a family that defines \( \modelX \).
	The proof of Corollary~\ref{p. intersection of affines is affine of models} shows that for any \( \eps, \eps' \) the intersection \( U_{\eps} \cap U_{\eps'}= \curlySOf{D_{\eps} + D_{\eps'}}  \) is open in \( \modelX \). So, the collection of \( U_{\eps} \cap U_{\eps'} \)'s together with identity functions between them forms a gluing datum. 
	Since \( (U_{\eps})_{\eps =1}^n \) covers \( \modelX \) and we glue them along identity maps, the obtained glued scheme is indeed \( ( \modelX, {\sheafO}_{\modelX} ) \). Moreover, by gluing the inclusion maps \( D \hookrightarrow D_{\eps} \), more precisely by gluing the scheme morphisms \( U_{\eps} \to \curlySOf{D} \) that correspond to the inclusions \( D \subseteq D_{\eps} \), we obtain a scheme morphism from \( ( \modelX, {\sheafO}_{\modelX} ) \) to \( (\curlySOf{D} , \sheafO_{D}) \). The continuous map on the underlying topological spaces is the center map from \( \modelX \) to \( \curlySOf{D} \).  
\end{remark}
In particular, the following theorem shows that a model over an algebraically closed field is an abstract variety.
\begin{thm} \label{model X is seperated D scheme}
	Let \( \modelX \) be a model over \( D \). Then, the scheme \( \modelX \) is a separated \( D \)-scheme of finite type. Moreover, the scheme \( \modelX \) is proper over \( D \) if and only if \( \modelX \) is a complete model.
\end{thm}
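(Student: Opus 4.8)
The plan is to handle the three assertions in turn: finite type and separatedness unconditionally, and then properness via the valuative criterion.

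\emph{Finite type and separatedness.} Since \( \modelX \) is defined by a finite family \( \{D_{\eps}\}_{\eps\in\Lambda}\subseteq\affinesInX \), it is covered by the finitely many affine opens \( \curlySOf{D_{\eps}}\cong\Spec D_{\eps} \) with each \( D_{\eps} \) a finitely generated \( D \)-algebra; hence \( \modelX\to\Spec D \) is quasi-compact and locally of finite type, so of finite type. For separatedness I would apply the affine-open criterion using Corollary~\ref{p. intersection of affines is affine of models}: the intersection \( \curlySOf{D_{\eps}}\cap\curlySOf{D_{\eps'}}=\curlySOf{D_{\eps}[D_{\eps'}]} \) is affine over \( D \), and the canonical map \( D_{\eps}\otimes_{D}D_{\eps'}\to D_{\eps}[D_{\eps'}] \) is surjective because \( D_{\eps}[D_{\eps'}] \) is generated as a ring by \( D_{\eps}\cup D_{\eps'} \); this forces the diagonal of \( \modelX \) over \( D \) to be a closed immersion. (Alternatively one may verify the uniqueness half of the valuative criterion: two lifts along a valuation ring would produce two elements of \( \modelX \) dominated by a common valuation ring, contradicting irredundancy.)

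\emph{Properness implies completeness.} Suppose \( \modelX\to\Spec D \) is proper and let \( V \) be a valuation ring of \( K \) with \( D\subseteq V \). Since \( K \) is the generic point of \( \modelX \) and maps to the generic point of \( \Spec D \), the canonical morphism \( \Spec K\to\modelX \) together with \( \Spec V\to\Spec D \) induced by \( D\hookrightarrow V \) forms a commutative square over \( \Spec D \). By the valuative criterion it admits a lift \( \Spec V\to\modelX \), whose value at the closed point is an \( R\in\modelX \) with \( R\subseteq V \) and \( M_{R}=M_{V}\cap R \); that is, \( V \) dominates \( R \). As \( V \) was arbitrary, \( \modelX \) is a complete model.

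\emph{Completeness implies properness.} Assume \( \modelX \) complete. Having shown separatedness and finite type, I would verify the existence half of the valuative criterion (in its non-Noetherian form, \cite{GortzAlgGeo2020}) to obtain universal closedness. Given a valuation ring \( V \) with fraction field \( L \) and a commutative square formed by \( g\colon\Spec L\to\modelX \) and \( \Spec V\to\Spec D \), let \( R_{x}=\sheafO_{\modelX,x} \) be the local ring at \( x=g(\ast) \) and \( \phi\colon R_{x}\to L \) the induced local homomorphism, so \( \ker\phi=M_{R_{x}} \), \( \phi \) factors through \( \kappa(x)\hookrightarrow L \), and \( \phi(D)\subseteq V \). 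Put \( B:=\phi^{-1}(V)\subseteq R_{x} \); then \( B \) is local with \( M_{B}=\phi^{-1}(M_{V}) \), one has \( D\subseteq B\subseteq R_{x}\subseteq K \), and since each nonzero \( r\in R_{x} \) satisfies \( r\in B \) or else \( r \) is a unit of \( R_{x} \) with \( r^{-1}\in B \), the fraction field of \( B \) is \( K \). The heart of the matter is to exhibit a valuation ring \( W \) of \( K \) with \( D\subseteq W \), \( W\cap R_{x}=B \) and \( M_{W}\cap R_{x}=M_{B} \) that lies under a valuation ring dominating \( R_{x} \). I would obtain it as a composite valuation: pick a valuation ring \( W' \) of \( K \) dominating \( R_{x} \), so \( \kappa(x)\hookrightarrow W'/M_{W'} \); extend the valuation ring \( V\cap\kappa(x) \) of \( \kappa(x) \) to a valuation ring \( \overline W \) of \( W'/M_{W'} \); and let \( W \) be the preimage of \( \overline W \) under \( W'\twoheadrightarrow W'/M_{W'} \). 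A computation along \( R_{x}\twoheadrightarrow\kappa(x) \) yields the stated identities. Because \( \modelX \) is complete, \( \Zar(K/D) \) dominates \( \modelX \), so the domination map \( \centerFrom{\Zar(K/D)}{\modelX} \) is defined and continuous; since \( W\subseteq W' \) places \( W \) in the closure of \( W' \) in \( \Zar(K/D) \), while \( \centerFrom{\Zar(K/D)}{\modelX}(W')=R_{x} \) by irredundancy, continuity gives \( R_{y}:=\centerFrom{\Zar(K/D)}{\modelX}(W)\subseteq R_{x} \). Then \( R_{y}\subseteq W\cap R_{x}=B \) and \( M_{R_{y}}=M_{W}\cap R_{y}=M_{B}\cap R_{y} \), so \( B \) dominates \( R_{y} \); hence \( \phi \) restricts to a local homomorphism \( R_{y}\to V \), giving the required lift \( \Spec V\to\modelX \) (sending the closed point to \( y \)). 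Uniqueness of the lift is the separatedness already proved, so \( \modelX\to\Spec D \) is proper.

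\emph{Main obstacle and an alternative.} The delicate part is this last direction: constructing the composite valuation \( W \) and checking \( W\cap R_{x}=B \) and \( M_{W}\cap R_{x}=M_{B} \) — where care with composite valuations is needed — and then reading off \( R_{y}\subseteq R_{x} \) from continuity of the domination map on the Zariski–Riemann space, which is precisely where completeness enters. A route avoiding the valuation-theoretic construction is to note that every projective model is a projective, hence proper, \( D \)-scheme; that a complete \( \modelX \) is the target of a surjective domination map from a projective model (a Chow-type statement); and that universal closedness of \( \modelX\to\Spec D \) then follows from that of the projective model by the standard argument that the image of a closed set equals the image of its preimage under a surjection.
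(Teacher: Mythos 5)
Your proposal is correct, and its overall strategy agrees with the paper's: finite type from the finite affine cover by the \( \curlySOf{D_{\eps}} \), and separatedness/properness from the valuative criterion, with irredundancy supplying uniqueness of lifts and completeness supplying existence. The difference is in how much of the valuative criterion you take on faith. The paper simply invokes the general (non-Noetherian) valuative criterion of \cite[Theorem~15.8]{GortzAlgGeo2020}, which already contains the reduction to test objects \( \Spec V \) with \( V \) a valuation ring of the function field \( K \); after that reduction, "every valuation ring of \( K \) containing \( D \) dominates some (at most one) element of \( \modelX \)" is literally the definition of completeness (irredundancy), so the paper's proof is two lines. You instead re-prove that reduction by hand: given an arbitrary test square with \( V \subseteq L \), you pass to \( B=\phi^{-1}(V)\subseteq R_x \) and manufacture a valuation ring \( W \) of \( K \) with \( W\cap R_x=B \) and \( M_W\cap R_x=M_B \) as a composite of a valuation dominating \( R_x \) with an extension of \( V\cap\kappa(x) \) to the residue field; completeness then gives a center \( R_y \), and the continuity of \( \centerFrom{\Zar(K/D)}{\modelX} \) together with \( W\subseteq W' \) forces \( R_y\subseteq R_x \), yielding the lift. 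This is correct (the identities \( W\cap R_x=B \), \( M_W\cap R_x=M_B \) do check out, since units of \( R_x \) are units of \( W' \) and membership in \( W \) is then decided in \( W'/M_{W'} \)), and it makes the argument self-contained at the cost of length; it is essentially the proof of the reduction step hidden inside the cited theorem. Your first separatedness argument (affine intersections \( \curlySOf{D_{\eps}[D_{\eps'}]} \) plus surjectivity of \( D_{\eps}\otimes_D D_{\eps'}\to D_{\eps}[D_{\eps'}] \)) is a genuinely different and fully elementary route that avoids valuation theory altogether; your parenthetical valuative argument for uniqueness is the paper's, though stated loosely for general \( L \) — as written it really needs the refinement to \( L=K \) (or a pullback of \( V \) to \( K \)), the same implicit reduction the paper leans on. The closing "alternative" via domination by a projective model is plausible but is only a sketch and would itself require a Chow-type lemma not established in the paper.
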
 
\begin{proof} 
	Say \( \modelX = \cup_{\eps \in \Lambda } \curlySOf{D_{\eps}} \) where \( \Lambda \) is a finite index set and \( D_{\eps}  \)'s are in \( \affinesInX \). In particular, for any \( \eps \), the ring \( D_{\eps} \) is a finitely generated \( D \)-algebra which implies the scheme \( \curlySOf{D_{\eps}} \) is of finite type over  \( D \). Therefore, \( ( \modelX, {\sheafO}_{\modelX} ) \) is of locally finite type over \( D \) as it has an affine cover of finite type over \( D \). Combining with the fact that it is quasi-compact, we conclude that \( ( \modelX, {\sheafO}_{\modelX} ) \) is a \( D \)-scheme of finite type.  
	
	The scheme morphism from \( ( \modelX, {\sheafO}_{\modelX} ) \) to the affine scheme \( \curlySOf{D} \) is quasi-separated since so are both schemes. 
	Moreover, a valuation domain can dominate at most one element of \( \modelX \) as models are irredundant premodels. Therefore, Valuative criterion - general version (\cite[Theorem~15.8]{GortzAlgGeo2020}) yields that the morphism \( \modelX \to \curlySOf{D} \) is separated. Additionally, from by the same criterion, we conclude that  \( \modelX \to \curlySOf{D} \) is proper if and only if any valuation domain that contains \( D \) must dominate an element of \( \modelX \). The latter condition is equivalent that the model \( \modelX \) is complete.
\end{proof}
Conversely, for a given integral scheme \( ( \modelX, {\sheafO}_{\modelX} ) \)  that is separated and of finite type over \( D \), its underlying topological space can be viewed as a model over \( D \). 
\begin{prop} \label{t. integral scheme is model}
	Let \( ( \modelX, {\sheafO}_{\modelX} ) \) be an integral scheme that is separated and of finite type over \( D \). Assume that its function field is \( K \). Then, there is a model \( \modelY \)  of \( K \) over \( D \) such that the corresponding scheme \( (\modelY, {\sheafO}_{\modelY} ) \) is isomorphic to \( ( \modelX, {\sheafO}_{\modelX} ) \).
\end{prop}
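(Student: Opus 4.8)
The plan is to realize the underlying topological space of $\modelX$ as a set of local subrings of $K$ via the ``local ring at a point'' assignment, and then to check that separatedness of $\modelX$ over $D$ is exactly what forces this set to be an \emph{irredundant} premodel, i.e.\ a model.

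First I would pick a finite affine open cover $\modelX = \bigcup_{i=1}^{n} U_i$ with $U_i = \Spec A_i$; this is possible since $\modelX$ is quasi-compact and of finite type over $D$, so each $A_i$ is a finitely generated $D$-algebra. As $\modelX$ is integral, each $A_i$ is an integral domain whose fraction field is the function field $K$, so we may regard $A_i$ as a subring of $K$; since the generic point of $\modelX$ maps to the generic point of $\Spec D$ (the function field being $K$, which contains $D$ by our standing convention), the structure map $D \to A_i$ is the inclusion $D \subseteq A_i \subseteq K$, and hence each $A_i$ is an affine ring over $D$. Now set $\phi \colon \modelX \to \localDomains$, $\phi(x) = \sheafO_{\modelX, x}$. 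For $x \in U_i$ with prime $P_x \ideal A_i$ one has $\phi(x) = (A_i)_{P_x}$ inside $K$, so $\phi(U_i) = \curlySOf{A_i}$, and I would put $\modelY \coloneqq \phi(\modelX) = \bigcup_{i=1}^{n} \curlySOf{A_i}$, a premodel of $K$ over $D$.

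The heart of the argument, and the step I expect to be the main obstacle, is to show $\modelY$ is irredundant. Suppose $R \neq R'$ lie in $\modelY$ and some valuation ring $V$ of $K$ containing $D$ dominates both; writing $R = \phi(x)$ and $R' = \phi(x')$ we must have $x \neq x'$. Each of $x, x'$ induces a morphism $\Spec V \to \modelX$ over $\Spec D$ lifting the generic point $\Spec K \to \modelX$ (from the local inclusions $\sheafO_{\modelX, x} \hookrightarrow V$ and $\sheafO_{\modelX, x'} \hookrightarrow V$, which carry the closed point of $\Spec V$ to $x$ and $x'$ respectively), so we obtain two distinct lifts of one and the same square, contradicting separatedness of $\modelX \to \Spec D$ via the valuative criterion (\cite[Theorem~15.8]{GortzAlgGeo2020}), which applies since $\modelX \to \Spec D$ is quasi-separated. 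Hence no such $V$ exists, $\modelY$ is irredundant, so $\modelY$ is a model over $D$; in particular $\phi$ is injective. (This is precisely where premodels like the one in Example~\ref{ex. non-irredundant premodel} are ruled out.)

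Finally I would upgrade the bijection $\phi$ to an isomorphism of schemes, working chart by chart. For each $i$, $\curlySOf{A_i}$ is open in $\modelY$ (Proposition~\ref{lem affines are open}), and $\phi\restriction_{U_i} \colon \Spec A_i \to \curlySOf{A_i}$ is exactly the canonical identification $P \leftrightarrow (A_i)_P$; since $A_i$ is a domain, $\sheafO_{\modelY}(W) = \bigcap_{R \in W} R$ agrees with $\sheafO_{\Spec A_i}(W)$ under this identification, so $\phi\restriction_{U_i}$ is an isomorphism of locally ringed spaces onto $\bigl(\curlySOf{A_i}, \sheafO_{\modelY}\restriction_{\curlySOf{A_i}}\bigr)$. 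Injectivity of $\phi$ gives $\phi(U_i \cap U_j) = \curlySOf{A_i} \cap \curlySOf{A_j}$, and on this overlap the two chartwise isomorphisms coincide, as both send a point to its local ring and a section to the same intersection of localizations inside $K$. Gluing the homeomorphisms $\phi\restriction_{U_i}$ and the sheaf isomorphisms over the finite cover then produces an isomorphism of locally ringed spaces $\modelX \xrightarrow{\sim} (\modelY, \sheafO_{\modelY})$, hence of schemes, as desired. A minor technical point to keep an eye on is the claim $D \hookrightarrow A_i$: it is essentially the only place the hypothesis on the function field is used, and it is what makes $\modelY$ a premodel \emph{over $D$} rather than merely over a quotient of $D$.
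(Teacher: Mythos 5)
Your proposal is correct and takes essentially the same route as the paper: choose a finite affine open cover $U_i \cong \Spec A_i$ with each $A_i$ a domain containing $D$ and having fraction field $K$, and set $\modelY = \bigcup_{i=1}^{n}\curlySOf{A_i}$. The paper's proof stops at asserting that this choice ``works,'' so your verification of irredundancy via the valuative criterion for separatedness and the chart-by-chart gluing of the isomorphism simply supplies the details the paper leaves implicit.
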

\begin{proof}
	The reason we do not directly use  \( \modelX \) is that it does not have to be a subset of \( \localDomains \). Since \( ( \modelX, {\sheafO}_{\modelX} ) \) is an integral \( D \)-scheme, we may find an affine cover \( {U_i} \cong \Spec A_i  \) such that for each \( i \)  the integral domain \( A_i \) contains \( D \) and has fraction field \( K \). Moreover, as \( \modelX \) is quasi-compact, we can choose a finite cover, say \( U_1 , \dotsc ,  U_n \). Then taking \( \modelY = \bigcup_{i=1}^n \curlySOf{A_i} \) works. 
\end{proof}

Predictably, the set of affine open subschemes of \( \modelX \) corresponds to \( \affinesInX \).
\begin{lemma} \label{affine opens are affines in model}
	Let \( \modelX \) be a model of \( K \) over \( D \) and \( U \) be an affine open subscheme of \( ( \modelX, {\sheafO}_{\modelX} ) \). Then the ring \( {\sheafO}_{\modelX} (U) \) is in \( \affinesInX \).
\end{lemma}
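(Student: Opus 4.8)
The plan is to put $A := \sheafO_{\modelX}(U)$ and verify directly the three conditions that place $A$ in $\affinesInX$: that $A$ is a subring of $K$ containing $D$, that $A$ is a finitely generated $D$-algebra, and that $\curlySOf{A}\subseteq\modelX$. We may assume $U\neq\emptyset$, the empty case being degenerate. Since $U$ is an affine open subscheme, the canonical morphism of ringed spaces gives $(U,\sheafO_{\modelX}\restriction_{U})\cong\Spec A$ with $A=\sheafO_{\modelX}(U)$; in particular $U$ is quasi-compact.

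First I would dispose of the two easy conditions. By Notation~\ref{sheaf structure of a model} we have $A=\bigcap_{R\in U}R$, and since every $R\in U\subseteq\localDomains$ is a subring of $K$ containing $D$, the intersection $A$ is again a subring of $K$ containing $D$. For finite generation, recall from Theorem~\ref{model X is seperated D scheme} that $\modelX$ is of finite type over $D$; the open immersion $U\hookrightarrow\modelX$ is locally of finite type, so the structure morphism $U\to\Spec D$ is locally of finite type, and it is quasi-compact because $U$ is quasi-compact. Hence $U\to\Spec D$ is of finite type, which for the affine scheme $U=\Spec A$ says exactly that $A$ is a finitely generated $D$-algebra.

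The crux is the identification $\curlySOf{A}=U$ as subsets of $\localDomains$. Fix $R\in U$. By Lemma~\ref{premodel is a locally ringed space} the stalk $\sheafO_{\modelX,R}$ equals $R$ and the map $A=\sheafO_{\modelX}(U)\to\sheafO_{\modelX,R}$ is the inclusion $\bigcap_{S\in U}S\hookrightarrow R$ of subrings of $K$; set $P_{R}:=M_{R}\cap A\in\Spec A$. Under $U\cong\Spec A$ the point $R$ corresponds to $P_{R}$, and the standard computation of stalks on an affine scheme identifies $\sheafO_{\modelX,R}$ with the localization $A_{P_{R}}$ via the map induced by $A\hookrightarrow R$; this comparison map $A_{P_{R}}\to R$ is therefore the natural localization map of subrings of $K$, hence injective, and it is an isomorphism abstractly, so $A_{P_{R}}=R$ inside $K$. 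As $R\mapsto P_{R}$ is a bijection $U\to\Spec A$, every prime $P$ of $A$ is some $P_{R}$, so $A_{P}=A_{P_{R}}=R\in U$; conversely $A_{P_{R}}=R$ for each $R\in U$. Thus $\curlySOf{A}=\set{A_{P}}{P\in\Spec A}=U\subseteq\modelX$. (Applying this to the generic point $K\in U$, which lies in $U$ since $K$ is the generic point of $\modelX$ by Proposition~\ref{lem affines are open}~\eqref{item3 in lem affines are open}, gives $K=A_{M_{K}\cap A}=A_{(0)}=\operatorname{Frac}(A)$, so $\curlySOf{A}$ genuinely lies in $\localDomains$.) Combining the three conditions yields $A\in\affinesInX$.

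I expect the main obstacle to be making the identification in the third paragraph fully precise: one must confirm that the abstract ringed-space isomorphism $U\cong\Spec A$ is compatible with the ambient embeddings into $K$, so that the scheme-theoretic local ring $A_{P_{R}}$ is literally the subring $R\subseteq K$ and not merely isomorphic to it — this is exactly where Lemma~\ref{premodel is a locally ringed space}, that stalks are the rings themselves and restriction maps are honest inclusions, does the work. A secondary point that needs care, since $D$ is not assumed Noetherian, is reading ``of finite type'' as ``locally of finite type plus quasi-compact'' in the second paragraph; the quasi-compactness of $U$, which holds because $U$ is affine, supplies the second half.
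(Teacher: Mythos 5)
Your proof is correct and follows essentially the same route as the paper's: the finite-generation step via the open immersion \( U \hookrightarrow \modelX \), quasi-compactness, and composition with \( \modelX \to \curlySOf{D} \) is exactly the paper's argument. The only difference is that you carefully justify the identification \( U = \curlySOf{A} \) inside \( \localDomains \) using the stalk computation of Lemma~\ref{premodel is a locally ringed space}, a point the paper dispatches with ``which means \( U \cong \curlySOf{A} \)''; this is added rigor rather than a different approach.
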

\begin{proof}
	Say \( A= {\sheafO}_{\modelX} (U) \), which means \( U \cong \curlySOf{A}  \) and \( K \) is the fraction field of \( A \). Since \( U \) is an affine open subscheme, we have an open immersion \( U \hookrightarrow \modelX \). Recall that immersions are of locally finite type. Therefore, as \( \modelX \) is quasi-compact, \( U \hookrightarrow \modelX \) is of finite type. Now, by composing it with \( \modelX \to \curlySOf{D} \), we see that the scheme morphism \( U \to \curlySOf{D} \) is of finite type as this property is preserved under compositions. In particular, as a \( D \)-algebra, \( A= {\sheafO}_{\modelX} (U) \) is finitely generated and hence we obtain \( A \in \affinesInX \) as claimed. 
\end{proof}

\begin{prop} \label{p. correspondance of ab and qCoh ideals}
	Let \( \modelX \) be a model over \( D \). Ideals on the model \( \modelX \) correspond to the quasi-coherent sheaves of ideals on the scheme \( ( \modelX, {\sheafO}_{\modelX} ) \). 
\end{prop}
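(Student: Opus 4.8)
The plan is to set up explicit, mutually inverse assignments between ideals on the model $\modelX$ (in the sense of Subsection~\ref{Subsec. Abhy ideals}) and quasi-coherent $\sheafO_\modelX$-submodules of $\sheafO_\modelX$. The only non-formal ingredients are the intrinsic characterization of ideals on a model from Proposition~\ref{equiv def of aby ideal} and the elementary fact that an ideal $I$ of an integral domain $A$ with fraction field $K$ satisfies $I = \bigcap_{\mathfrak{m}} I_{\mathfrak{m}}$ inside $K$, the intersection ranging over the maximal ideals of $A$ (equivalently over all primes, equivalently $I = \bigcap_{R \in \curlySOf{A}} IR$); I will apply this to $A$ itself and to the ideal $IA[1/f]$ of $A[1/f]$.

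Given an ideal $\abyideal$ on $\modelX$, I would define a subpresheaf $\sheafideal$ of $\sheafO_\modelX$ by $\sheafideal(U) = \bigcap_{R \in U} \abyideal R \subseteq \bigcap_{R\in U} R = \sheafO_\modelX(U)$, taking restriction maps to be the inclusions inherited from $\sheafO_\modelX$; this is legitimate because each $\abyideal R$ is an ideal of $R$. Since all restriction maps involved are genuine inclusions of subsets of $K$, the sheaf axioms hold trivially, so $\sheafideal$ is a sheaf of ideals. For quasi-coherence, fix $A \in \affinesInX$, put $U = \curlySOf{A}$ and $I = \sheafideal(U) = A \cap \abyideal \ideal A$; because $\abyideal$ is an ideal on $\modelX$ we have $\abyideal R = IR$ for every $R \in \curlySOf{A}$, whence for nonzero $f \in A$ the ring of sections over $\curlySOf{A[1/f]}$ is
\begin{equation*}
	\sheafideal\big(\curlySOf{A[1/f]}\big) = \bigcap_{R \supseteq A[1/f]} IR = \bigcap_{R \supseteq A[1/f]} \big(I A[1/f]\big) R = I A[1/f] = I_f ,
\end{equation*}
using $A[1/f]\subseteq R$ for the middle equality and the elementary fact in $A[1/f]$ for the next. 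Thus $\sheafideal\restriction_{\curlySOf{A}}$ is $\widetilde{I}$; as the $\curlySOf{A}$ with $A\in\affinesInX$ form an affine basis of $\modelX$ (Proposition~\ref{lem affines are open}~\eqref{item2 in lem affines are open}), $\sheafideal$ is a quasi-coherent sheaf of ideals.

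Conversely, given a quasi-coherent $\sheafO_\modelX$-submodule $\sheafideal \subseteq \sheafO_\modelX$, I would set $\abyideal R = \sheafideal_R$, the stalk, which is an ideal of $\sheafO_{\modelX,R} = R$ (Lemma~\ref{premodel is a locally ringed space}). For $A \in \affinesInX$, Lemma~\ref{affine opens are affines in model} gives $\sheafO_\modelX(\curlySOf{A}) = A$, so $I_A := \sheafideal(\curlySOf{A}) \ideal A$, and quasi-coherence gives $\sheafideal\restriction_{\curlySOf{A}} \cong \widetilde{I_A}$; hence $\abyideal R = (I_A)_P = I_A R$ for $R = A_P \in \curlySOf{A}$, while $A \cap \abyideal = \bigcap_{P}(I_A)_P = I_A$ by the elementary fact. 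So $\abyideal R = (A \cap \abyideal) R$ for all $A\in\affinesInX$ and $R\in\curlySOf{A}$, and $\abyideal$ is an ideal on $\modelX$ by Proposition~\ref{equiv def of aby ideal}.

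Finally I would verify these constructions are inverse. Starting with $\abyideal$, forming $\sheafideal$, and taking stalks: $\sheafideal_R$ is the directed union of the $\sheafideal(\curlySOf{A}) = A\cap\abyideal$ over $A \in \affinesInX$ with $R\in\curlySOf{A}$ --- a cofinal family of neighborhoods, restriction maps being inclusions --- so $\sheafideal_R \subseteq \abyideal R$; and an element $i/s$ of $\abyideal R = I_A A_P$ with $i\in I_A$, $s\in A\setminus P$ already lies in $\sheafideal(\curlySOf{A[1/s]}) \subseteq \sheafideal_R$, giving equality. Starting with $\sheafideal$, forming $\abyideal$, and rebuilding: over each chart $\curlySOf{A}$ the rebuilt sheaf has sections $\bigcap_P \sheafideal_{A_P} = \bigcap_P (I_A)_P = I_A = \sheafideal(\curlySOf{A})$, and two $\sheafO_\modelX$-submodules of $\sheafO_\modelX$ that agree, as subsheaves, on the affine basis coincide, so the rebuilt sheaf is $\sheafideal$. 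I expect the only step needing real care to be the quasi-coherence check above --- that the prescribed sections localize correctly over the charts $\curlySOf{A}$, which is exactly where the ``ideal equals the intersection of its localizations'' fact is used; everything else is bookkeeping with subsheaves of $\sheafO_\modelX$ sitting inside $K$.
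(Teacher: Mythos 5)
Your proof is correct and follows essentially the same route as the paper's: both reduce to the classification of quasi-coherent ideal sheaves on an affine chart $\curlySOf{A}$ as $\widetilde{I}$ for $I \ideal A$, combined with the identification of affine open subschemes with $\affinesInX$ and the ``ideal equals the intersection of its localizations'' fact. You simply make explicit the global constructions ($\sheafideal(U)=\bigcap_{R\in U}\abyideal R$ in one direction, $\abyideal R=\sheafideal_R$ in the other) and the mutual-inverse verification that the paper's shorter argument leaves implicit.
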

\begin{proof}
	First, suppose \( \modelX = \curlySOf{A} \) for an integral domain \( A \). An \( {\sheafO}_{\modelX} \)-module \( \sheafideal \) is a quasi-coherent sheaf of ideals on the affine scheme corresponding to \( A \) if and only if \( \sheafideal = \tilde{I} \) for some ideal \( I \) of \( A \) where \( \tilde{I} \) is the unique sheaf satisfying \( \tilde{I} \big({A[1/a]}\big) = I A[1/a] \) for all nonzero \( a\in A \).  In other words, any quasi-coherent sheaf of ideals on \( \curlySOf{A} \) is the sheaf whose stalks are extensions of an ideal \( I \ideal A \). On the other hand, by Proposition~\ref{aby ideals are uniquely determined by ideals on affine pieces} we know that the ideals on the affine model \( \curlySOf{A} \) correspond to the ideals of \( A \) and hence to the quasi-coherent sheaf of ideals.
	
	For the general case, we note that an \( {\sheafO}_{\modelX} \)-module \( \sheafideal \) is a quasi-coherent sheaf of ideals if and only if so is the restriction \( \sheafideal \restriction_U\) for any affine open subscheme \( U \). Hence, the general case follows from Lemma~\ref{affine opens are affines in model}, which establishes the identification of affine open subschemes of \( \modelX \) and \( \affinesInX \). 
\end{proof}

\section{Intersecting Nagata Extensions Obtained from a Model} \label{Sec. Dstar of models}

We begin by recalling the definition of the Nagata extension of a given ring \( R \). More information on Nagata extensions can be found in \citet{gilmer1972multiplicative}. 
For the rest of this paper, $t$ will denote an indeterminate. 

\begin{definition}
	A polynomial in \( R[t] \) is called primitive if its coefficients generate the unit ideal \( R\).
\end{definition}
There is ambiguity in the literature concerning the terminology of primitive polynomials, particularly when the ring is a GCD (Greatest Common Divisor) domain. In some sources (e.g. \cite{ZariskiSamuel_Volume1_RenewedEdition}), a polynomial \( f\in R[t] \) is called primitive if the greatest common divisor of the coefficients of \( f \) is unit. This does not always coincide with the definition we adopted. For instance, when \( R \) is taken as the GCD domain \( \complexC[x,y] \), the greatest common divisor of the coefficients of the polynomial \( f(t)=xt+y \in R[t] \) is a unit; however, we do not consider \( f \) as primitive since \( \gendby{x,\,y} \) is a proper ideal. 

For any ring \( R \), the set of primitive polynomials from \( R[t] \) is a multiplicative set. So, the following definition makes sense.
\begin{definition} 
	Let \( R \) be a ring and \( N \) denote the set of all primitive polynomials from \( R[t] \). Then the localization \( N\inverse R[t] \) is called the Nagata extension (or the Nagata	function ring) of \( R \) and is denoted by \( R(t) \).
\end{definition}
Note that the Nagata extension of a field \( K \) is the usual field extension \( K(t) \). 

\begin{definition} \label{notation Dstar from models}
	For a given model \( \modelX \) of \( K \) over \( D \), we set 
	\begin{equation*}
		\dstarof[\modelX] = \bigcap_{R \in \modelX} R(t) .
	\end{equation*}
	For readability, when \( \modelX \) is clear from the context, we omit \( \modelX \) and write \( \dstar \).
\end{definition}
Clearly, \( \dstar \) is an integral domain and its fraction field is \( K(t) \). 

\begin{prop} \label{p. equiv def for Dstar}
	Suppose \( \modelX \) is a model over \( D \) and \( A \in \affinesInX \). Then, 
	\begin{equation*}
		I = \bigcap_{R \in \curlySOf{A}} IR(t) 
	\end{equation*}
	for any ideal \( I\ideal A(t) \). Moreover, 
	\begin{equation} \label{eq. Dstar cover dependent definition}
		\dstar = \bigcap_{\eps \in \Lambda } D_{\eps} (t)
	\end{equation}
	where \( \big\{ D_{\eps} \big\}_{\eps \in \Lambda } \subseteq \affinesInX \) is a family that defines \( \modelX \).
\end{prop}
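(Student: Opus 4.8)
The plan is to deduce both statements from two standard facts about a Nagata extension $B(t)$ of a ring $B$ — see \cite{gilmer1972multiplicative} — namely that (i) every maximal ideal of $B(t)$ has the form $\mathfrak{m}B(t)$ for a (necessarily maximal) ideal $\mathfrak{m}$ of $B$, and (ii) $\bigl(B(t)\bigr)_{\mathfrak{m}B(t)} = B_{\mathfrak{m}}(t)$; together with the elementary local--global principle for ideals of a domain. For (ii) the point is that the prime $\mathfrak{m}B(t)$ of $B(t) = N\inverse B[t]$ pulls back to the prime $\mathfrak{m}B[t]$ of $B[t]$, which meets no primitive polynomial (such a polynomial cannot have all coefficients in $\mathfrak{m}$); localizing $B(t)$ there therefore yields $(B[t])_{\mathfrak{m}B[t]}$, which is the localization of $B_{\mathfrak{m}}[t]$ at its polynomials with a unit coefficient, i.e. $B_{\mathfrak{m}}(t)$. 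Since $A \in \affinesInX$ forces every $R \in \curlySOf{A}$ to be a localization $A_{\mathfrak{p}}$ with fraction field $K$, the rings appearing in (ii) for $B = A$ are exactly the $R(t)$ with $R = A_{\mathfrak{m}} \in \curlySOf{A}$; moreover $A[t] \subseteq A_{\mathfrak{p}}[t] \subseteq A_{\mathfrak{p}}(t)$ and every primitive polynomial of $A[t]$ stays primitive, hence is a unit, in $A_{\mathfrak{p}}(t)$, so $A(t) \subseteq R(t)$ for all $R \in \curlySOf{A}$.

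For the first assertion, fix an ideal $I \ideal A(t)$. The inclusion $I \subseteq \bigcap_{R \in \curlySOf{A}} IR(t)$ is immediate from $I \subseteq A(t) \subseteq R(t)$ and the fact that $I$ is an ideal. For the reverse inclusion I would invoke the local--global principle in the form: for a domain $B$ and an ideal $J \ideal B$ one has $J = \bigcap_{\mathfrak{m} \in \Max B} J B_{\mathfrak{m}}$ inside $\operatorname{Frac} B$, proved by noting that if $x$ lies in every $JB_{\mathfrak{m}}$ then $(J :_B x)$ is contained in no maximal ideal, hence is all of $B$, so $x \in J$. Applying this to $B = A(t)$ and $J = I$, and using (i)--(ii) to identify $\bigl(A(t)\bigr)_{\mathfrak{m}A(t)} = A_{\mathfrak{m}}(t)$ for $\mathfrak{m} \in \Max A$, we obtain
\[
  \bigcap_{R \in \curlySOf{A}} IR(t) \subseteq \bigcap_{\mathfrak{m} \in \Max A} I A_{\mathfrak{m}}(t) = \bigcap_{Q \in \Max A(t)} I \bigl(A(t)\bigr)_Q = I,
\]
where the first inclusion holds because $\{A_{\mathfrak{m}}(t) : \mathfrak{m} \in \Max A\} \subseteq \{R(t) : R \in \curlySOf{A}\}$. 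Hence equality holds throughout.

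The second assertion follows quickly. Taking $I = A(t)$, the unit ideal, in the first assertion — and noting $A(t)R(t) = R(t)$ since $A(t) \subseteq R(t)$ — gives $A(t) = \bigcap_{R \in \curlySOf{A}} R(t)$ for every $A \in \affinesInX$. If $\{D_{\eps}\}_{\eps \in \Lambda}$ is a finite subfamily of $\affinesInX$ defining $\modelX$, so that $\modelX = \bigcup_{\eps \in \Lambda}\curlySOf{D_{\eps}}$, then
\[
  \dstar = \bigcap_{R \in \modelX} R(t) = \bigcap_{\eps \in \Lambda} \ \bigcap_{R \in \curlySOf{D_{\eps}}} R(t) = \bigcap_{\eps \in \Lambda} D_{\eps}(t),
\]
which is \eqref{eq. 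Dstar cover dependent definition}.

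The one genuine subtlety is fact (i). It is \emph{not} true that every prime of $A(t)$ is extended from $A$ — for instance, for $A = \complexC[x,y]$ the ideal $(xt - y)A(t)$ is a nonzero prime whose contraction to $A$ is zero — so one cannot simply pretend that intersecting over $\curlySOf{A}$ is the same as intersecting over all of $\curlySOf{A(t)}$. What rescues the argument is precisely that the \emph{maximal} ideals of $A(t)$ are all extended: this guarantees that the $\curlySOf{A}$-indexed intersection already includes the localizations of $A(t)$ at all of its maximal ideals, which is all the local--global principle needs.
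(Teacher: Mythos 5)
Your proof is correct and follows essentially the same route as the paper: the reverse inclusion is obtained from exactly the chain $\bigcap_{R \in \curlySOf{A}} IR(t) \subseteq \bigcap_{\mathfrak{m} \in \Max A} I A_{\mathfrak{m}}(t) = \bigcap_{Q \in \Max A(t)} I (A(t))_Q = I$, and the second assertion by taking $I$ to be the unit ideal and splitting the intersection over the defining cover. You simply make explicit the facts about maximal ideals and localizations of Nagata extensions that the paper uses tacitly in the middle equality.
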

\begin{proof}
	Note that \( I \subseteq \bigcap  IR(t) \) trivially follows from the fact that for any \( R\in \curlySOf{A} \), we have \( A(t) \subseteq R(t) \). On the other hand, the inequality 
	\begin{equation*}
		\bigcap_{R \in \curlySOf{A}} IR(t) \ 
		\subseteq \bigcap_{M \in \Max A} IA_M (t) 
		\ 	= \bigcap_{M' \in \Max  A(t) } I {A (t) }_{M'}   
		= I  
	\end{equation*}
	gives the reverse inclusion. By taking \( I \) as a unit ideal, we obtain that \(A(t) = \bigcap_{R \in \curlySOf{A}} R(t) \) and hence 
	\begin{equation*}
		\dstar 
		= \bigcap_{\eps \in \Lambda } \bigcap_{R \in \curlySOf{D_{\eps}}} R(t) 
		= \bigcap D_{\eps} (t)
	\end{equation*}
	as needed.
\end{proof}
Most of the time, we take Equation~\eqref{eq. Dstar cover dependent definition} as the definition for \( \dstar \).

For a projective model over \( D \), the intersection \( \bigcap_{R\in \modelX} R\) is \( D \) itself. However, in general \( \dstar \) properly contains \( D(t) \).
\begin{example}
	Let \( D=\complexC[x,y] \) and \( \modelX  \) be the projective model defined by \( x ,\,y \). Then \( \dstar  = D [x/y ] (t) \cap D [y/x ] (t) \). Note that
	\begin{align*}
		\frac{y}{xt +y} &\in \dstar \\
		\shortintertext{since }
		\frac{y}{xt +y} = \frac{1}{(x/y)t +1}  \in D [x/y ] (t) \quad & \text{and} \quad 
		\frac{y}{xt +y} = \frac{y/x}{(y/x) t +1} \in  D [y/x ] (t) 
	\end{align*}
	but \( \frac{y}{xt +y} \notin D(t) \). Thus,	\( D(t) \subsetneq  \dstar\).
\end{example}

\begin{prop} 
	Let \( \modelX, \modelY \) be models of \( K \) over \( D \). If \( \modelX \) is dominated by \( \modelY \), then \( \dstarof[\modelX]  \)  is subring of \(  \dstarof[\modelY] \). 
\end{prop}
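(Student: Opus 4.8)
The plan is to unwind the definitions and reduce everything to the elementary fact that the Nagata extension construction is functorial along inclusions of local domains. Recall that $\modelX \dominatedby \modelY$ means that every $S \in \modelY$ dominates some element of $\modelX$; by the definition of domination, such an element $R \in \modelX$ is in particular a subring of $S$. So the only real content of the statement is the claim: for $R, S \in \localDomains$ with $R \subseteq S$, one has $R(t) \subseteq S(t)$ inside the common field $K(t)$.

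First I would establish this inclusion. Write $R(t) = N_R\inverse R[t]$ and $S(t) = N_S\inverse S[t]$, where $N_R$ and $N_S$ denote the respective sets of primitive polynomials. Since $1 \in R \subseteq S$, any polynomial of $R[t]$ whose coefficients generate the unit ideal of $R$ also has coefficients generating the unit ideal of $S$; hence $N_R \subseteq N_S$. Combined with $R[t] \subseteq S[t]$, this yields
\begin{equation*}
	R(t) = N_R\inverse R[t] \subseteq N_S\inverse S[t] = S(t) .
\end{equation*}
(It is worth noting here that this step uses the unit-ideal formulation of "primitive" adopted in the paper rather than the GCD formulation; the unit-ideal version is exactly what makes $N_R \subseteq N_S$ hold with no hypothesis on $R$ or $S$.)

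The proposition then follows immediately: take $f \in \dstarof[\modelX] = \bigcap_{R \in \modelX} R(t)$ and fix an arbitrary $S \in \modelY$. Since $\modelX \dominatedby \modelY$, there is some $R \in \modelX$ with $S$ dominating $R$, hence $R \subseteq S$, so $f \in R(t) \subseteq S(t)$ by the previous step. As $S \in \modelY$ was arbitrary, $f \in \bigcap_{S \in \modelY} S(t) = \dstarof[\modelY]$, giving $\dstarof[\modelX] \subseteq \dstarof[\modelY]$.

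I do not expect a genuine obstacle; the argument is short. The one point to handle carefully is the bookkeeping around the $\dominatedby$ notation — making sure we extract "$R \subseteq S$ for some $R \in \modelX$" from "$\modelX$ is dominated by $\modelY$" in the right direction — and stating the functoriality $R(t) \subseteq S(t)$ cleanly, perhaps as a small displayed lemma so it can be cited later.
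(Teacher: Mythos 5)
Your proposal is correct and follows essentially the same route as the paper: extract a center $R \subseteq S$ for each $S \in \modelY$ from the domination hypothesis, note that $R(t) \subseteq S(t)$, and intersect over $\modelY$. The only difference is that you spell out why primitivity is preserved under the inclusion $R \subseteq S$, which the paper leaves implicit.
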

\begin{proof}
	Assume \( \modelX \dominatedby \modelY \) and fix \( R\in \modelY \). Then, \( R \) has a center in \( \modelX \). Moreover, its center is a subring of \( R \) and hence we have the same relation for their Nagata extensions. Therefore, \( \dstarof[\modelX] \) is a subring of \( R(t) \) and so intersecting the rings \( R(t) \), where \( R \) runs through the set \( \modelY \), demonstrates that the ring  \( \dstarof[\modelX] \) is contained in \( \dstarof[\modelY] \).
\end{proof}

\subsection{Construction arising from a projective model} \label{Subsec. construction Dstar of models}
We focus on projective models.

\begin{lemma} \label{lem technical for localization projmodel} 
	Let \( \modelX \) be a projective model over \( D \) defined by \( \{ x_1,\dotsc, x_n \}\subseteq K\setminus \{ 0\} \). Let \( J \) be the \( D \)-submodule of \( K \) generated by \( \{ x_1,\dotsc, x_n \} \). Then, for any nonzero \( a \in J \),  the polynomial $\theta/a \in D[Ja\inverse][t]$ is primitive where
	\begin{equation*}
		\theta = \sum_{i=1}^{n} x_{i} t^{{i-1}}  
	\end{equation*}
	and hence $a/ \theta$ is an element of \( \dstar \).
\end{lemma}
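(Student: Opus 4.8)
The plan is to first show that $\theta/a$ is primitive by writing $1$ as a $D[Ja\inverse]$-linear combination of its coefficients, and then to deduce $a/\theta \in \dstar$ by checking membership on each affine chart of the projective model.

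First I would make the ring $D[Ja\inverse]$ explicit. Since $J$ is the $D$-submodule of $K$ generated by $x_1,\dotsc,x_n$, every element of $Ja\inverse$ is a $D$-linear combination of $x_1/a,\dotsc,x_n/a$, so $D[Ja\inverse] = D[x_1/a,\dotsc,x_n/a]$; in particular $\theta/a = \sum_{i=1}^{n}(x_i/a)\,t^{i-1}$ does lie in $D[Ja\inverse][t]$, with coefficients $x_1/a,\dotsc,x_n/a$. Because $a \in J$ we may write $a = \sum_{i=1}^{n} d_i x_i$ with $d_i \in D$; then $\sum_{i=1}^{n} d_i\,(x_i/a) = a/a = 1$ belongs to the ideal of $D[Ja\inverse]$ generated by the coefficients of $\theta/a$. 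Hence this ideal is the unit ideal, so $\theta/a$ is primitive, and therefore (by the construction of the Nagata extension) $\theta/a$ is a unit of $D[Ja\inverse](t)$, i.e. $a/\theta \in D[Ja\inverse](t)$.

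Next I would descend to $\dstar = \bigcap_{R\in\modelX}R(t)$. Put $D_j := D[x_1/x_j,\dotsc,x_n/x_j] = D[\{x_1,\dotsc,x_n\}x_j\inverse]$ for $j = 1,\dotsc,n$, so that by the definition of a projective model $\modelX = \projModel(D;x_1,\dotsc,x_n) = \bigcup_{j=1}^{n}\curlySOf{D_j}$. Fix $j$. Since $x_j \in J\setminus\{0\}$, the first step applied with $a$ replaced by $x_j$ gives that $\theta/x_j$ is primitive in $D_j[t] = D[Jx_j\inverse][t]$, hence $x_j/\theta \in D_j(t)$; moreover $a/x_j = \sum_{i=1}^{n} d_i\,(x_i/x_j) \in D_j$, so $a/\theta = (a/x_j)(x_j/\theta) \in D_j(t)$. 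Now any $R\in\modelX$ lies in some $\curlySOf{D_j}$, i.e. $R$ is a localization of $D_j$, whence $D_j(t)\subseteq R(t)$ and $a/\theta \in R(t)$; as $R$ was arbitrary, $a/\theta \in \dstar$. (Alternatively, $\{D_j\}_{j=1}^{n}\subseteq\affinesInX$ is a family defining $\modelX$, so Proposition~\ref{p. equiv def for Dstar} gives $\dstar = \bigcap_{j=1}^{n} D_j(t)$ directly.)

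The only delicate point is keeping track of which subring of $K(t)$ each fraction lives in — especially the identification $D[Ja\inverse] = D[x_1/a,\dotsc,x_n/a]$, which is what lets $\theta/a$ have coefficients in $D[Ja\inverse]$, together with the relation $a = \sum d_i x_i$, which is precisely what makes the coefficient ideal the unit ideal. Beyond this bookkeeping the argument is routine, and I do not expect a genuine obstacle.
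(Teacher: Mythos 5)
Your proof is correct and follows essentially the same route as the paper's: show $\theta/x_j$ is primitive in each chart ring $D_j[t]$ and write $a/\theta = (a/x_j)(x_j/\theta)$ to conclude membership in every $D_j(t)$, hence in $\dstar$. The only (minor, welcome) difference is that you verify primitivity of $\theta/a$ for an arbitrary nonzero $a\in J$ directly via $1=\sum_i d_i(x_i/a)$, whereas the paper reduces at the outset to $a\in\{x_1,\dotsc,x_n\}$, where one coefficient is already $1$.
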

\begin{proof}
	It is enough to assume $a\in \{ x_1,\dotsc, x_n \}$. 
	For $\eps \in \{1 , \dotsc ,  n\}$, we let \( D_{\eps}= D[J{x_{\eps}}\inverse ] \), so $\{D_{\eps} (t) \}_{\eps =1}^n$ is an open cover of \( \modelX \). Now, fix an arbitrary $\eps   \in \{ 1, \dotsc , n\}$. Note that $\theta / x_{\eps} \in D_{\eps} [t]$ and the coefficient of $t^{{\eps}-1}$ is $ x_{\eps}/x_{\eps}=1$. Therefore, $\theta / x_{\eps} $ is a primitive polynomial in \( D_{\eps} [t] \) and hence
	\begin{equation*}
		\frac{a}{\theta} = \frac{a/x_{\eps} }{\theta/x_{\eps} } 
	\end{equation*}
	is an element of \( D_{\eps} (t) \). Thus, \( a/\theta \in \cap D_{\eps} (t) = \dstar \).
\end{proof}

\begin{prop} \label{localization for proj models} 
	Let \( \modelX \) be a projective model, say \( \modelX= \projModel (D;J) \) where \( J \) is a finitely generated \( D \)-submodule of \( K \). Then, for any nonzero \( a \in J \), the ring $D[Ja\inverse](t)$ is a localization of \( \dstar \). 
\end{prop}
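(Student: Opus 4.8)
The plan is to exhibit $D[Ja^{-1}](t)$ explicitly as a localization $\dstar[1/s]$ for a suitable single element $s \in \dstar$, and the natural candidate coming out of Lemma~\ref{lem technical for localization projmodel} is $s = a/\theta$. Since that lemma already tells us $a/\theta \in \dstar$ (and dually $\theta/a$ is a primitive polynomial over $D[Ja^{-1}]$, hence a unit in $D[Ja^{-1}](t)$), it is reasonable to expect $\dstar[\theta/a]=D[Ja^{-1}](t)$. So the first step is to reduce, as in the lemma, to the case $a \in \{x_1,\dots,x_n\}$ (rescaling $J$ by a denominator does not change the projective model, and the statement is invariant under this), and to set up the inclusion $\dstar[\theta/a] \subseteq D[Ja^{-1}](t)$, which is immediate: $\dstar \subseteq D[Ja^{-1}](t)$ by Proposition~\ref{p. equiv def for Dstar} (the cover $\{D_\eps(t)\}$ includes $D[Ja^{-1}](t)$), and $\theta/a$ is invertible there.

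The substantive direction is the reverse inclusion $D[Ja^{-1}](t) \subseteq \dstar[\theta/a]$. For this I would first show that $D[Ja^{-1}][t] \subseteq \dstar[\theta/a]$: a generator of $D[Ja^{-1}]$ over $D$ has the form $x_j/a$, and the identity $x_j/a = (x_j/\theta)\cdot(\theta/a)$ reduces the problem to checking $x_j/\theta \in \dstar$, which is exactly the content of Lemma~\ref{lem technical for localization projmodel} (the case $a = x_j$ there). Hence every element of $D[Ja^{-1}][t]$ lies in $\dstar[\theta/a] \subseteq \dstar[\theta/a]$. Then I must invert the primitive polynomials of $D[Ja^{-1}][t]$ inside $\dstar[\theta/a]$. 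The key observation is that any primitive polynomial $g \in D[Ja^{-1}][t]$ can, after clearing denominators by a power of $a$, be written as $a^m g = h(\theta, \text{monomials in } x_i)$; more robustly, I would argue that $g^{-1}$ already lies in $\dstar[\theta/a]$ by localizing at each affine chart: for each $\eps$, $g$ becomes a primitive polynomial (after rescaling by the unit $x_\eps/a$) in $D_\eps[t]$, so $g^{-1} \in D_\eps(t)$, and one checks that $g^{-1}$ in fact lies in the subring $D_\eps(t)$ after inverting $\theta/a$ — i.e., $g^{-1} \in D_\eps(t) \cap$ (the relevant localization). Intersecting over $\eps$ and using Proposition~\ref{p. equiv def for Dstar} to identify $\dstar = \bigcap_\eps D_\eps(t)$ should yield $g^{-1} \in \dstar[\theta/a]$.

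The main obstacle I anticipate is precisely this last point: controlling, uniformly over the finite cover, the denominators needed to express $g^{-1}$ and the primitive-polynomial inverses as elements of the \emph{single} localized ring $\dstar[\theta/a]$, rather than merely element-by-element in the various $D_\eps(t)$. The clean way around this is a localization-commutes-with-finite-intersection argument: since $\theta/a$ is (a unit multiple of) a specific polynomial and $\dstar = \bigcap_{\eps=1}^n D_\eps(t)$ is a finite intersection of domains inside $K(t)$, one has $\dstar[\theta/a] = \bigcap_{\eps} D_\eps(t)[\theta/a]$, and in each chart $D_\eps(t)[\theta/a] = D_\eps(t)$ because $\theta/a = (\theta/x_\eps)(x_\eps/a)$ is already a unit in $D_\eps(t)$ (its $t^{\eps-1}$-coefficient is $1$ up to the unit $x_\eps/a$, so $\theta/x_\eps$ is primitive over $D_\eps$). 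Wait — that would give $\dstar[\theta/a] = \bigcap_\eps D_\eps(t) = \dstar$, which is too strong; the resolution is that the finite-intersection-commutes-with-localization identity fails here precisely because $\theta/a$ is a non-unit, non-prime element of $\dstar$, so I must instead argue the inclusion $D[Ja^{-1}](t) \subseteq \dstar[\theta/a]$ directly by the generator-and-primitive-polynomial bookkeeping above, treating denominators by explicit powers of $a/\theta$. I expect the cleanest exposition to proceed via: (1) $\dstar[\theta/a] \subseteq D[Ja^{-1}](t)$ trivially; (2) every generator $x_j/a$ of $D[Ja^{-1}]$ lies in $\dstar[\theta/a]$ via $x_j/a = (x_j/\theta)(\theta/a)$ with $x_j/\theta \in \dstar$; (3) for a primitive $g \in D[Ja^{-1}][t]$, show $a^N g \in D[\ldots][t]$ combines with powers of $\theta$ so that $1/g \in \dstar[\theta/a]$, finishing the reverse inclusion and hence the equality.
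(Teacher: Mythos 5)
Your steps (1) and (2) are exactly the paper's: $\dstar[\theta/a]\subseteq D[Ja\inverse](t)$ because $a/\theta$ is a unit there, and $D[Ja\inverse][t]\subseteq \dstar[\theta/a]$ via $b/a=(b/\theta)(\theta/a)$ with $b/\theta\in\dstar$ from Lemma~\ref{lem technical for localization projmodel}. The gap is your step (3). You are trying to prove the stronger assertion $D[Ja\inverse](t)=\dstar[\theta/a]$, which requires every primitive polynomial $g\in D[Ja\inverse][t]$ to become a unit in the single-element localization $\dstar[\theta/a]$, i.e.\ $(\theta/a)^k/g\in\dstar$ for some $k$. You never establish this; your own exploration shows you lack a mechanism for it (the attempted identity $\dstar[\theta/a]=\bigcap_\eps D_\eps(t)[\theta/a]$ is exactly the step that fails, as you notice, since localization at a non-unit does not commute with the infinite/finite intersection in the needed direction), and you fall back on unspecified ``bookkeeping with explicit powers of $a/\theta$.'' That bookkeeping is the whole difficulty, and it is not clear it can be carried out.

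The missing idea is that the statement only asks for \emph{a} localization, not the localization at the single element $a/\theta$. Having established the sandwich
\begin{equation*}
D[Ja\inverse][t]\ \subseteq\ N\inverse\dstar\ \subseteq\ D[Ja\inverse](t),
\end{equation*}
where $N$ is generated by $a/\theta$, one localizes all three rings at the multiplicative set $S$ of primitive polynomials of $D[Ja\inverse][t]$; this is legitimate because $S\subseteq D[Ja\inverse][t]\subseteq N\inverse\dstar$. The left end becomes $D[Ja\inverse](t)$ by definition of the Nagata extension, and the right end is unchanged since elements of $S$ are already units there; hence $S\inverse\bigl(N\inverse\dstar\bigr)=D[Ja\inverse](t)$, exhibiting $D[Ja\inverse](t)$ as the localization of $\dstar$ at the multiplicative set generated by $N\cup S$. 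This sidesteps entirely the question of whether each $1/g$ lies in $\dstar[\theta/a]$. If you insist on the single-element statement, you must either prove that claim or abandon it; as written, your proof is incomplete at precisely the point you flagged as the main obstacle.
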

\begin{proof}
	Fix a nonzero \( a \in J \) and let \( N \) be the multiplicative subset generated by \( a/\theta \in \dstar \) where \( \theta = x_1 + \dotsb + x_n t^{n-1} \) is as in Lemma~\ref{lem technical for localization projmodel} for some generator set \( \{x_1 , \dotsc ,  x_n\} \) of \( J \). Since \( a/\theta \) is a unit element of \( D[Ja\inverse](t) \), the localization \( N\inverse \dstar = \dstar [\theta / a] \) is contained in $D[J a\inverse](t)$. Moreover, for any \( b\in J \)
	\begin{equation*}
		\frac{b}{a} = \frac{b}{\theta} \cdot \frac{\theta}{a} \in \dstar [\theta / a]
	\end{equation*}
	since \( {\theta}/{a} \) is in \( \dstar \) by Lemma~\ref{lem technical for localization projmodel}. Therefore, we get
	\begin{equation} \label{eqLemmaLocalizationForProjModels}
		D[J a\inverse] [t] \subseteq N\inverse \dstar \subseteq D[J a\inverse] (t) .
	\end{equation} 
	Now, let \( S \) denote the multiplicative subset of \( D[J a\inverse] [t] \) consisting of the primitive polynomials. Then, \( D[J a\inverse](t) = S\inverse \Big( D[J a\inverse] [t] \Big)  \). Localizing the inequality in Equation~\eqref{eqLemmaLocalizationForProjModels} at the multiplicative set \( S \) gives that $D[J a\inverse] (t) = S\inverse \left( N\inverse \dstar \right)$. So we conclude that \( D_{\eps} (t) \) is a localization of \( \dstar \).
\end{proof}

\begin{thm} \label{t. an ideal of ProjModel-Dstar survives in some epsilon}
	Assume \( \modelX \) is a projective model defined by a finite subset \( \{ x_1,\dotsc, x_n \} \) of \( K\setminus \{ 0\} \). For each \( \eps \) from \( 1 \) to \( n \), we let \( D_{\eps}  \) denote  \( D[x_1/ x_{\eps} ,\dotsc, x_n/ x_{\eps} ] \). Then, for any proper ideal \( Q \) of \( \dstar \), there exists some \( \eps \) such that \( Q D_{\eps} (t) \) is proper in \( D_{\eps} (t) \). 
\end{thm}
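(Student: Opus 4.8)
The plan is to prove the contrapositive: if $QD_\eps(t)=D_\eps(t)$ for every $\eps\in\{1,\dots,n\}$, then $Q=\dstar$. Any proper ideal of $\dstar$ is contained in a maximal ideal $\mathfrak{m}$, and $Q\subseteq\mathfrak{m}$ forces $QD_\eps(t)\subseteq\mathfrak{m}D_\eps(t)$; so it suffices to show that for each maximal ideal $\mathfrak{m}$ of $\dstar$ there is some $\eps_0$ with $\mathfrak{m}D_{\eps_0}(t)\neq D_{\eps_0}(t)$. By Proposition~\ref{localization for proj models} each $D_\eps(t)$ is a localization of $\dstar$, so this condition is equivalent to saying that $\mathfrak{m}$ is the contraction of a prime ideal of $D_{\eps_0}(t)$.

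Two elementary facts are used throughout. First, $t$ is a unit of $\dstar$: indeed $t\in R[t]\subseteq R(t)$ and $t$ is a primitive polynomial of $R[t]$, so $t^{-1}\in R(t)$, for every $R\in\modelX$; hence $t,t^{-1}\in\bigcap_{R\in\modelX}R(t)=\dstar$. Second, with $\theta=\sum_{i=1}^{n}x_i t^{i-1}$ as in Lemma~\ref{lem technical for localization projmodel}, each $y_i:=x_i/\theta$ lies in $\dstar$ and $\sum_{i=1}^{n}y_i t^{i-1}=\theta/\theta=1$, so $\langle y_1,\dots,y_n\rangle=\dstar$.

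Now fix a maximal ideal $\mathfrak{m}$ of $\dstar$ and let $W$ be a valuation ring of the fraction field $K(t)$ of $\dstar$ dominating the local ring $\dstar_{\mathfrak{m}}$, with associated valuation $v_W$, so that $M_W\cap\dstar=\mathfrak{m}$. Since $t$ is a unit of $\dstar\subseteq W$ we get $v_W(t)=0$; together with $\sum_i y_i t^{i-1}=1$ and $v_W(y_i)\geq0$ this gives $\min_i v_W(y_i)=0$, so $v_W(y_{\eps_0})=0$ for some $\eps_0$, and then $x_i/x_{\eps_0}=y_i/y_{\eps_0}$ has $v_W\geq0$ for all $i$, i.e.\ $D_{\eps_0}\subseteq W$ and $D_{\eps_0}[t]\subseteq W$. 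If one can upgrade this to $D_{\eps_0}(t)\subseteq W$, the proof is complete: $M_W\cap D_{\eps_0}(t)$ is then a prime of $D_{\eps_0}(t)$ contracting to $M_W\cap\dstar=\mathfrak{m}$, so $\mathfrak{m}D_{\eps_0}(t)$ is a proper ideal, contrary to assumption. Now $D_{\eps_0}(t)\subseteq W$ amounts to: every primitive polynomial $p=\sum_j c_j t^j$ of $D_{\eps_0}[t]$ is a unit of $W$. Clearing a relation $\sum_j a_j c_j=1$ ($a_j\in D_{\eps_0}$) and applying $v_W$ shows $\min_j v_W(c_j)=0$, and since $v_W(t)=0$ we get $v_W(p)\geq\min_j v_W(c_j)=0$; the only way $v_W(p)>0$ is for the image of $t$ in the residue field $\kappa(W)=W/M_W$ to be a root of the reduction of $p$.

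Ruling this out is the heart of the matter, and the step I expect to require real work. One needs to choose the dominating valuation $W$ so that the image of $t$ in $\kappa(W)$ is transcendental over the image of $D_{\eps_0}$; this ought to be possible because $t$ is transcendental over $K$, the natural candidate being a valuation $W$ whose residue field is purely transcendental in $t$ over the residue field of $W\cap K$ (a Gauss-type extension), for which one then invokes completeness of the projective model $\modelX$ to see that such a $W$ still dominates $\dstar_{\mathfrak{m}}$. Equivalently, avoiding valuations: from $QD_\eps(t)=D_\eps(t)$ one extracts, by clearing denominators in the description of $D_\eps(t)$ from the proof of Proposition~\ref{localization for proj models}, an element $y_\eps^{k_\eps}p_\eps\in Q$ with $p_\eps$ a primitive polynomial of $D_\eps[t]$; using $\langle y_1,\dots,y_n\rangle=\dstar$ one reduces (again passing to a maximal ideal) to showing that no maximal ideal of $\dstar$ can contain such an element for all $\eps$ simultaneously, the obstacle again being a content/primitivity argument controlling the polynomials $p_\eps$.
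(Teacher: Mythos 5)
Your reduction to maximal ideals, the verification that $t$ and the elements $y_i=x_i/\theta$ are units generating the unit ideal of $\dstar$, and the deduction that some $D_{\eps_0}\subseteq W$ are all correct, and your diagnosis of where the difficulty lies is accurate. But the proof stops exactly at the crucial point: you never establish that the dominating valuation $W$ can be chosen so that every primitive polynomial of $D_{\eps_0}[t]$ is a unit of $W$ (equivalently, that the image of $t$ in $\kappa(W)$ is not a root of any reduced primitive polynomial). This is a genuine gap, not a routine verification. The fix you gesture at --- replace $W$ by the Gauss extension $V(t)$ of $V=W\cap K$ and invoke completeness of $\modelX$ --- does give $\dstar\subseteq V(t)$ (completeness yields $R\subseteq V$ for some $R\in\modelX$, hence $R(t)\subseteq V(t)$), but it gives no control on the center: there is no a priori reason why $M_{V(t)}\cap\dstar$ should equal $\mathfrak{m}$ rather than some other prime. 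Indeed, knowing that every maximal ideal of $\dstar$ is the center of a Gauss valuation is essentially equivalent to Corollary~\ref{proj-model max Dstar comes from Depsilons}, which the paper derives \emph{from} this theorem; so the argument as sketched is circular. Your second sketch (``avoiding valuations'') points in the right direction but is likewise not carried out.

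For comparison, the paper's proof is direct and global rather than pointwise. Assuming $QD_{\eps}(t)=D_{\eps}(t)$ for all $\eps$, it picks for each $i$ an element $\alpha_i\in Q$ that is a unit of $D_i(t)$ (possible because $D_i(t)$ is a localization of $\dstar$), writes $\alpha_i=f_{i,\eps}/g_{i,\eps}$ over each chart with $g_{i,\eps}$ primitive and both $f_{i,i},g_{i,i}$ primitive, and forms $\varphi=\sum_i\alpha_i\,t^{(i-1)c}$ with $c$ exceeding the sum of all degrees involved. Over a common denominator in $D_{\eps}[t]$, the gap $c$ prevents the coefficients of distinct summands from interfering, so the content of the numerator contains the content of $f_{\eps,\eps}\prod_{k\neq\eps}g_{k,\eps}$, which is the unit ideal because a product of primitive polynomials is primitive. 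Hence $\varphi$ is a quotient of primitive polynomials in every chart, i.e.\ a unit of every $D_{\eps}(t)$ and therefore of $\dstar$, forcing $Q=\dstar$. This is exactly the content/primitivity argument your last sentence anticipates; without it (or a non-circular construction of a Gauss valuation centered at $\mathfrak{m}$), the proposal does not constitute a proof.
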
  
\begin{proof}
	We have $\modelX = \bigcup_{\eps= 1}^{n}  \curlySOf{D_{\eps}}$. Fix an ideal $Q\ideal \dstar $ such that for all $\eps \in \{1 , \dotsc ,  n \}$ the extended ideal \( Q D_{\eps} (t) \) is the unit ideal \( D_{\eps} (t) \). We will show that \( Q \) must be the unit idea. By assumption, for each \( i \) from \( 1 \) to \( n \), the ideal \( Q \) contains an element $\alpha_i \in \dstar$ that is invertible in \( D_{i} (t) \) since from Proposition~\ref{localization for proj models} \( D_{i} (t) \) is a localization of \( \dstar \). Therefore, as all $\alpha_1 , \dotsc ,  \alpha_n $ are from $\dstar =\bigcap D_{\eps} (t) $, for each ordered pair $(i,\eps)$, there exists polynomials $f_{i,{\eps}}, \, g_{i,{\eps}} \in D_{\eps} [t]$ such that 
	\begin{equation*}
		\alpha_i = \frac{f_{i,{\eps}}}{g_{i,{\eps}}}
	\end{equation*}
	where $g_{i,{\eps}}$ is primitive in \( D_{\eps} [t] \) and whenever $\eps = i$, both $f_{i,{\eps}}, \, g_{i,{\eps}} $ are primitive in \( D_{\eps} [t] \). Let 
	\begin{align} 
		c\, &= 1 +\, \sum_{ i, \eps = 1 , \dotsc ,  n } \deg_t f_{i,{\eps }} \,+\,  \sum_{ i, \eps = 1 , \dotsc ,  n } \deg_t g_{i,{\eps }} \in \naturalN \nonumber
		\\
		\shortintertext{and }
		\varphi &= \sum_{i=1}^{n} \alpha_i \, t^{(i-1)c} \in \dstar . \label{eq for phi}
	\end{align}
	We claim that \( \varphi \)  is a unit in all $ D_{\eps} (t)$'s, hence in \( \dstar \) as well. To argue this fix an \( \eps \). By substituting $\alpha_i$'s, we can write the Equation (\ref{eq for phi}) as 
		\begin{align}
		\varphi &= \frac{f_{1,{\eps }}}{g_{1,{\eps }}} + \frac{f_{2,{\eps }}}{g_{2,{\eps }}} t^{c} + \cdots + \frac{f_{n,{\eps }}}{g_{n,{\eps  }}} t^{(n-1)c}  \nonumber \\
		&= \frac{f_{1,{\eps }} \cdot g_{2,{\eps }} \cdots g_{n,{\eps }} +
			 \cdots +  f_{n,{\eps }}\cdot g_{1,{\eps }}  \cdots g_{n-1,{\eps }} \, t^{(n-1)c} }{ g_{1,{\eps }}  \cdots g_{n,{\eps }} }   \label{eq line2}
	\end{align}

	Notice that in Equation~\eqref{eq line2} both the numerator 
	\begin{equation} \label{eq numerator}
		\sum_{i=1}^{n} \bigg(  f_{i,{\eps }} \Big(  \prod_{\substack{k=1 \\ k\neq i}}^n g_{k,{\eps }} \Big) t^{(i-1)c}  \bigg) 
	\end{equation}
	and the denominator 
	are polynomials in \( D_{\eps} [t] \). Moreover, the denominator is primitive in \( D_{\eps} [t] \), since all $g_{1,{\eps }} ,\dotsc , g_{n,{\eps }}$ are primitive as well. Furthermore, $c$ is chosen large enough that the content ideal of the numerator, i.e. the ideal generated by its coefficients, is generated by the content ideal of the polynomials that appear as summands in Equation (\ref{eq numerator}). In particular, the content ideal of the numerator contains the content ideal of the polynomial $f_{\eps,{\eps}}  \prod_{k\neq \eps} g_{k,{\eps}}$, which is primitive as it is a product of primitive polynomials. Therefore, the polynomial in Equation (\ref{eq numerator}) must be primitive since its content ideal contains the content ideal of a primitive polynomial.	Hence, Equation (\ref{eq line2}) gives a representation for \( \varphi \)  as a quotient of two primitive polynomial from \( D_{\eps} [t] \). Thus, \( \varphi \) is a unit in \( D_{\eps} (t) \). This demonstrates that $\varphi \in Q$ is a unit in \( \dstar \). So, \( Q \) cannot be proper. 
\end{proof}
As an immediate consequence of Proposition~\ref{localization for proj models} and Theorem~\ref{t. an ideal of ProjModel-Dstar survives in some epsilon}, we have the following.
\begin{cor} \label{proj-model max Dstar comes from Depsilons}
	Assume \( \modelX \) is a projective model defined by nonzero elements \( x_1,\dotsc , x_n  \) of \( K \). We set $D_{\eps} = D[x_1/ x_{\eps} ,\dotsc, x_n/ x_{\eps } ]$ for $\eps  \in \{1 , \dotsc ,  n\}$. Then, any maximal ideal of \( \dstar \) is a contraction of a maximal ideal of \( D_{\eps} (t) \) for some \( \eps \) in \( \{1 , \dotsc ,  n\} \).
\end{cor}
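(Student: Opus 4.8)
The plan is to chain together the two results already at hand: Proposition~\ref{localization for proj models}, which says each $D_{\eps}(t)$ is a localization of $\dstar$, and Theorem~\ref{t. an ideal of ProjModel-Dstar survives in some epsilon}, which says no proper ideal of $\dstar$ can blow up in every $D_{\eps}(t)$. So let $\mathfrak{m}$ be a maximal ideal of $\dstar$. It is proper, so by the theorem there is some $\eps \in \{1,\dotsc,n\}$ with $\mathfrak{m} D_{\eps}(t)$ a proper ideal of $D_{\eps}(t)$. Fix that $\eps$ and write $B = D_{\eps}(t)$, which by Proposition~\ref{localization for proj models} is a localization $N^{-1}\dstar$ of $\dstar$ at some multiplicative set $N$ (explicitly $N$ is generated by $a/\theta$ for a suitable $a$, but we will not need the explicit form).

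Next I would invoke the standard correspondence for localizations. Since $\mathfrak{m} B \neq B$, the set $N$ must be disjoint from $\mathfrak{m}$ — otherwise a unit of $B$ would lie in $\mathfrak{m} B$. Hence $\mathfrak{m}$ survives as a prime ideal $\mathfrak{m} B$ of $B$ under the bijection between primes of $\dstar$ disjoint from $N$ and primes of $B=N^{-1}\dstar$, and moreover the contraction of $\mathfrak{m} B$ to $\dstar$ is exactly $\mathfrak{m}$. It remains to upgrade $\mathfrak{m} B$ from merely prime to being contained in a maximal ideal whose contraction is still $\mathfrak{m}$. Pick any maximal ideal $\mathfrak{n}$ of $B$ with $\mathfrak{m} B \subseteq \mathfrak{n}$. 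Its contraction $\mathfrak{n} \cap \dstar$ is a prime ideal of $\dstar$ containing $\mathfrak{m}$; since $\mathfrak{m}$ is maximal, either $\mathfrak{n}\cap\dstar = \mathfrak{m}$ or $\mathfrak{n}\cap\dstar = \dstar$. The latter is impossible because $\mathfrak{n}$ is proper. Therefore $\mathfrak{n}\cap\dstar = \mathfrak{m}$, exhibiting $\mathfrak{m}$ as the contraction of a maximal ideal of $D_{\eps}(t)$, as claimed.

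I do not anticipate a genuine obstacle here; the corollary really is an immediate assembly of the two cited results plus elementary localization bookkeeping. The one point that warrants a sentence of care is the passage from ``$\mathfrak{m}B$ is prime'' to ``some maximal ideal of $B$ over $\mathfrak{m}B$ still contracts to $\mathfrak{m}$'': this uses only that $\mathfrak{m}$ is maximal in $\dstar$, so any prime of $\dstar$ properly containing it is the whole ring, while the contraction of a proper ideal is proper. I would present the argument in exactly the order above — reduce to a single $\eps$ via Theorem~\ref{t. an ideal of ProjModel-Dstar survives in some epsilon}, rewrite $D_{\eps}(t)$ as a localization via Proposition~\ref{localization for proj models}, then run the localization correspondence and the maximality squeeze — keeping the whole proof to a few lines.
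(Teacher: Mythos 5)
Your proof is correct and follows essentially the same route as the paper: apply Theorem~\ref{t. an ideal of ProjModel-Dstar survives in some epsilon} to find an $\eps$ where the extension stays proper, then use Proposition~\ref{localization for proj models} together with the prime correspondence for localizations. The paper is marginally more direct in observing that $\mathfrak{m}D_{\eps}(t)$ is itself already maximal (so no auxiliary $\mathfrak{n}$ is needed), but your maximality-squeeze argument reaches the same conclusion.
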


\begin{proof}
	Let $M \ideal \dstar$ be maximal. Then, from Theorem~\ref{t. an ideal of ProjModel-Dstar survives in some epsilon}, for some \( \eps \), the extension $MD_{\eps} (t)$ is proper. Moreover, by Proposition~\ref{localization for proj models}, \( D_{\eps} (t) \) is a localization of \( \dstar \), hence $MD_{\eps} (t)$ must be a maximal ideal. Therefore, we get $M = MD_{\eps} (t) \cap \dstar$.
\end{proof}

\begin{cor} \label{p. ideal on Dstar is intersection over model}
	Assume \( \modelX \) is a projective model. For any $I \ideal \dstar$,  we have 
	\begin{equation*}
		I = \bigcap_{R \in \modelX} I R(t) .
	\end{equation*}
\end{cor}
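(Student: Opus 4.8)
The plan is to reduce the global intersection to the finite affine cover $\{D_\eps(t)\}$ and then apply Proposition~\ref{p. equiv def for Dstar} on each affine piece. First I would fix $I \ideal \dstar$ and note that the inclusion $I \subseteq \bigcap_{R\in\modelX} IR(t)$ is immediate, since $\dstar \subseteq R(t)$ for every $R \in \modelX$. For the reverse inclusion, pick $D_1,\dotsc,D_n$ defining $\modelX$ as in Corollary~\ref{proj-model max Dstar comes from Depsilons}, so that $\modelX = \bigcup_{\eps=1}^n \curlySOf{D_\eps}$ and, by Proposition~\ref{p. equiv def for Dstar}, $\dstar = \bigcap_\eps D_\eps(t)$. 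The key point I would exploit is that, by Proposition~\ref{localization for proj models}, each $D_\eps(t)$ is a \emph{localization} of $\dstar$, hence $I D_\eps(t) \cap \dstar$ behaves well: for a localization $\dstar \to \dstar_N = D_\eps(t)$ one always has $I = \bigcap_\eps \big( I D_\eps(t) \cap \dstar \big)$ provided the localizations jointly ``cover'' $\dstar$, which is exactly the content of Theorem~\ref{t. an ideal of ProjModel-Dstar survives in some epsilon}.

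Concretely, I would argue as follows. Suppose $\alpha \in \bigcap_{R\in\modelX} IR(t)$. For each $\eps$, restricting the intersection to the primes of $D_\eps$ gives $\alpha \in \bigcap_{R\in\curlySOf{D_\eps}} IR(t)$; applying the first assertion of Proposition~\ref{p. equiv def for Dstar} to the ideal $J = I D_\eps(t) \ideal D_\eps(t)$ yields $\alpha \in I D_\eps(t)$. Thus $\alpha \in \bigcap_{\eps=1}^n I D_\eps(t)$, and it remains to show $\bigcap_\eps I D_\eps(t) \subseteq I$, i.e. that an element of $\dstar$ lying in every localized ideal $I D_\eps(t)$ already lies in $I$. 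For this, consider the ideal $Q = (I :_{\dstar} \alpha) = \{ r \in \dstar \mid r\alpha \in I\}$; since $\alpha \in I D_\eps(t)$ and $D_\eps(t)$ is a localization of $\dstar$, the ideal $Q$ is not contained in the kernel-avoiding set, more precisely $Q D_\eps(t) = D_\eps(t)$ for every $\eps$ (because $\alpha/1 \in I D_\eps(t) = (I\dstar) D_\eps(t)$ forces a primitive-polynomial denominator into $Q$). By Theorem~\ref{t. an ideal of ProjModel-Dstar survives in some epsilon}, an ideal of $\dstar$ whose extension to every $D_\eps(t)$ is the unit ideal must itself be the unit ideal, so $Q = \dstar$, i.e. $1 \in Q$, i.e. $\alpha \in I$. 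This proves $\bigcap_\eps I D_\eps(t) \subseteq I$ and hence the claimed equality.

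The step I expect to be the main obstacle is the passage $\alpha \in I D_\eps(t)$ for all $\eps$ $\implies$ $Q D_\eps(t) = D_\eps(t)$, i.e. checking that the colon ideal genuinely becomes the unit ideal after each localization; this is where one must use that $D_\eps(t) = N_\eps\inverse \dstar$ for a concrete multiplicative set $N_\eps$ (the primitive polynomials, together with the elements $\theta_\eps/x_\eps$ from Lemma~\ref{lem technical for localization projmodel}), so that $\alpha \in I D_\eps(t)$ unwinds to $s\alpha \in I$ for some $s \in N_\eps$, giving $s \in Q$ with $s$ invertible in $D_\eps(t)$. Once that is in hand, Theorem~\ref{t. an ideal of ProjModel-Dstar survives in some epsilon} does the rest. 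Everything else is formal manipulation of localizations and finite intersections, so I would keep those parts brief and concentrate the write-up on the colon-ideal argument.
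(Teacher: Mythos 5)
Your proof is correct, and it shares the paper's skeleton: both dispose of the easy inclusion, reduce the intersection over all of \( \modelX \) to the finite intersection \( \bigcap_{\eps} I D_{\eps}(t) \) via Proposition~\ref{p. equiv def for Dstar}, and then use that the localizations \( D_{\eps}(t) \) jointly cover \( \Spec \dstar \). The difference is in how that covering fact is deployed at the end. The paper writes \( I = \bigcap_{M \in \Max \dstar} I\dstar_M \) and invokes Corollary~\ref{proj-model max Dstar comes from Depsilons} to see that every \( \dstar_M \) occurs among the localizations of the \( D_{\eps}(t) \) at their maximal ideals; you instead run the local--global argument by hand on the colon ideal \( (I :_{\dstar} \alpha) \) and quote Theorem~\ref{t. an ideal of ProjModel-Dstar survives in some epsilon} directly, of which Corollary~\ref{proj-model max Dstar comes from Depsilons} is the maximal-ideal repackaging; the two routes are equivalent in substance and cost. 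One detail in your write-up should be stated more carefully: the multiplicative set realizing \( D_{\eps}(t) \) as a localization of \( \dstar \) is not literally ``the primitive polynomials of \( D_{\eps}[t] \) together with \( \theta/x_{\eps} \)'', because a primitive polynomial of \( D_{\eps}[t] \) need not lie in \( \dstar \). What your colon-ideal step actually needs, and what does follow from the proof of Proposition~\ref{localization for proj models}, is that \( D_{\eps}(t) = W_{\eps}\inverse \dstar \) for some multiplicative subset \( W_{\eps} \subseteq \dstar \) (for instance the saturation \( \set{s\in \dstar}{s \text{ is a unit in } D_{\eps}(t)} \)): since \( D_{\eps}(t) = S\inverse\big(N\inverse \dstar\big) \) with \( N \subseteq \dstar \), and every element of \( S \) is a fraction over \( \dstar \) whose denominator is already inverted, the iterated localization collapses to a single localization over \( \dstar \). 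With that in place, \( \alpha \in I D_{\eps}(t) \) gives \( w\alpha \in I \) for some \( w \in W_{\eps} \), hence \( (I :_{\dstar} \alpha) D_{\eps}(t) = D_{\eps}(t) \) as you claim, and the rest of your argument goes through.
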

\begin{proof}
	Say \( \modelX \) is defined by \( \{ x_1,\dotsc, x_n \}\subseteq K\setminus \{ 0\} \) and for each \( \eps \) from \( \{1 , \dotsc ,  n\} \), we let $D_{\eps} = D[x_1/ x_{\eps } ,\dotsc, x_n/ x_{\eps } ]$. We trivially have \( 	I \subseteq \bigcap R(t) \). On the other hand,
	\begin{equation*}
		I = \bigcap_{M \in \Max \dstar } I \dstar_{M} 
		\supseteq  \bigcap_{\eps =1}^n \ \bigcap_{M \in \Max D_{\eps} (t)} I D_{\eps} (t)_{M} 
		=\  \bigcap_{\eps =1}^n I D_{\eps} (t)
	\end{equation*}
	where the first and last equalities hold in general for integral domains and the middle inequality comes from Corollary~\ref{proj-model max Dstar comes from Depsilons}. Then, Proposition~\ref{p. equiv def for Dstar} gives the desired equality.
\end{proof}

Given a projective model \( \modelX \), we consider the mapping that sends a prime ideal \( P \ideal \dstar \)  to the ideal  \( D_{\eps} \cap P  \) for a fixed \( \eps \)  such that  \( P \) survives in \( D_{\eps} (t) \). Considering the prime ideals as the localizations of their corresponding rings, we get a map from \( \curlySOf{\dstar}  \) to \( \modelX \). Moreover, combined with Proposition~\ref{localization for proj models}, Corollary~\ref{proj-model max Dstar comes from Depsilons} shows that it sends closed points to the closed points of \( \modelX \). The following lemma implicitly demonstrates that the described map is, in fact, the domination map from \( \curlySOf{\dstar}  \) to \( \modelX \).
\begin{lemma}
	Let \( \modelX  \) be a projective model. Then \( \curlySOf{\dstar} \) properly dominates \( \modelX \).
\end{lemma}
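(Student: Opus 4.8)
The plan is to work with the cover $\modelX=\bigcup_{\eps=1}^{n}\curlySOf{D_{\eps}}$, where $D_{\eps}=D[x_1/x_{\eps},\dotsc,x_n/x_{\eps}]$, and to lean on two facts already in hand: each $D_{\eps}(t)$ is a localization of $\dstar$ (Proposition~\ref{localization for proj models}), and every proper ideal of $\dstar$ --- in particular every prime --- survives in at least one $D_{\eps}(t)$ (Theorem~\ref{t. an ideal of ProjModel-Dstar survives in some epsilon}). Everything will reduce to a single bookkeeping claim: \emph{for any $\eps$ and any prime $Q\ideal D_{\eps}(t)$, setting $P=Q\cap\dstar$, one has $\dstar_{P}=\bigl(D_{\eps}(t)\bigr)_{Q}$ as subrings of $K(t)$, and this local ring dominates $\bigl(D_{\eps}\bigr)_{Q\cap D_{\eps}}\in\curlySOf{D_{\eps}}\subseteq\modelX$.}

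To establish the claim I would write $D_{\eps}(t)=W\inverse\dstar$ for a multiplicative set $W\subseteq\dstar$. Since $Q$ is a proper prime of $W\inverse\dstar$, it contains no unit, hence is disjoint from $W$; therefore $P=Q\cap\dstar$ is a prime of $\dstar$ disjoint from $W$, and the standard correspondence of primes under localization yields $\dstar_{P}=\bigl(W\inverse\dstar\bigr)_{W\inverse P}=\bigl(D_{\eps}(t)\bigr)_{Q}$. For the domination, the inclusion $D_{\eps}\hookrightarrow D_{\eps}(t)$ carries $D_{\eps}\setminus(Q\cap D_{\eps})$ into units of $\bigl(D_{\eps}(t)\bigr)_{Q}$, so $\bigl(D_{\eps}\bigr)_{Q\cap D_{\eps}}\subseteq\bigl(D_{\eps}(t)\bigr)_{Q}$; and chasing denominators through the contraction $Q\bigl(D_{\eps}(t)\bigr)_{Q}\cap D_{\eps}=Q\cap D_{\eps}$ shows that the maximal ideal of $\bigl(D_{\eps}\bigr)_{Q\cap D_{\eps}}$ is the contraction of that of $\bigl(D_{\eps}(t)\bigr)_{Q}$ --- which is exactly what domination asks for.

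Granting the claim, I would first deduce $\modelX\dominatedby\curlySOf{\dstar}$. Given a prime $P\ideal\dstar$, Theorem~\ref{t. an ideal of ProjModel-Dstar survives in some epsilon} supplies an $\eps$ with $PD_{\eps}(t)$ proper; since $D_{\eps}(t)$ is a localization of $\dstar$, the ideal $Q\coloneqq PD_{\eps}(t)$ is then prime in $D_{\eps}(t)$ with $Q\cap\dstar=P$, so the claim shows $\dstar_{P}$ dominates the element $\bigl(D_{\eps}\bigr)_{Q\cap D_{\eps}}$ of $\modelX$. Irredundancy of $\modelX$ makes this center unique (and independent of which surviving $\eps$ was picked), so the domination map is well defined. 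For the \emph{properly} part I take $R\in\modelX$, write $R=\bigl(D_{\eps}\bigr)_{\mathfrak p}$ with $\mathfrak p\ideal D_{\eps}$ prime, and set $Q=\mathfrak p\,D_{\eps}(t)$. This $Q$ is prime because $D_{\eps}(t)/\mathfrak p D_{\eps}(t)\cong(D_{\eps}/\mathfrak p)(t)$ is a domain, and $Q\cap D_{\eps}=\mathfrak p$ by the usual behaviour of primes under a Nagata extension; hence, with $P=Q\cap\dstar$, the claim gives that $\dstar_{P}$ dominates $\bigl(D_{\eps}\bigr)_{Q\cap D_{\eps}}=\bigl(D_{\eps}\bigr)_{\mathfrak p}=R$ (and in fact $\dstar_{P}=R(t)$, which dominates $R$ since $R(t)$ is local with maximal ideal $M_R R(t)$ contracting to $M_R$). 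So every element of $\modelX$ is dominated by one of $\curlySOf{\dstar}$, which completes the proof.

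The one genuine obstacle is the bookkeeping claim, and within it the identification $\dstar_{P}=\bigl(D_{\eps}(t)\bigr)_{Q}$: one must keep clearly in view that $\dstar$, $D_{\eps}(t)$, $R(t)$ and $R$ all sit inside the single field $K(t)$, and that the passage from $\dstar$ to $D_{\eps}(t)$ is at once a localization of $\dstar$ (Proposition~\ref{localization for proj models}) and a Nagata extension of $D_{\eps}$. Once that identification is available, both halves of ``properly dominates'' are routine localization manipulations, and the uniqueness of centers is nothing more than the irredundancy of $\modelX$.
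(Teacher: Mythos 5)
Your proof is correct and takes essentially the same route as the paper's: Theorem~\ref{t. an ideal of ProjModel-Dstar survives in some epsilon} plus Proposition~\ref{localization for proj models} identify each \( \dstar_P \) with \( \bigl(D_{\eps}(t)\bigr)_{PD_{\eps}(t)} \) for some \( \eps \), which dominates \( \bigl(D_{\eps}\bigr)_{P\cap D_{\eps}} \in \modelX \), and properness comes from the fact that \( R(t)\in \curlySOf{\dstar} \) for every \( R\in \modelX \). You merely spell out the localization bookkeeping and the verification that \( R(t) \) is indeed a localization of \( \dstar \) at a prime, details the paper leaves implicit.
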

\begin{proof}
	Fix $R \in \curlySOf{\dstar}$ and write \( R=\dstar_{P} \) for a prime ideal \( P \) of \( \dstar \). Then, Theorem~\ref{t. an ideal of ProjModel-Dstar survives in some epsilon} gives the existence of an \( \eps \)  such that  \( P D_{\eps} (t)  \) is strictly contained in \( D_{\eps} (t) \) and from Proposition~\ref{localization for proj models} we must have \( R = D_{\eps} (t)_{PD_{\eps} (t)} \). Let \( Q = D_{\eps} \cap P \) and \( S  \in \modelX \) be the localization of \(  D_{\eps} \) at the prime \( Q \). Since \( Q \subseteq P \), it easily follows that \( R \) dominates \( S \). Thus, \( \modelX \dominatedby \curlySOf{\dstar} \). The fact that for any \( R \in \modelX \) its Nagata extension \( R(t) \) is in \( \curlySOf{\dstar} \) demonstrates that the domination is proper.
\end{proof}

Since  \( \modelX \dominatedby \curlySOf{\dstar} \), the domination map \( \centermap : \curlySOf{\dstar} \to \modelX \) is continuous. In fact, it induces a morphism of schemes from the affine scheme $\curlySOf{\dstar}$ to \( ( \modelX , {\sheafO}_{\modelX} ) \).
\begin{thm} \label{thm. flat morphism from Spec Dstar to projective X}
	Suppose \( \modelX  \) is a projective model. Then $\centermap :  \curlySOf{\dstar} \to \modelX$ is a morphism of schemes that is faithfully flat.
\end{thm}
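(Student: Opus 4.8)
I would establish that $\centermap$ is a morphism of schemes, then flat, then surjective (whence faithfully flat). Since $\modelX$ is covered by the affine opens $\curlySOf{D_\eps}$ with $D_\eps = D[x_1/x_\eps,\dotsc,x_n/x_\eps]$, and $\centermap\inverse\big(\curlySOf{D_\eps}\big)$ is the set of $R=\dstar_P\in\curlySOf{\dstar}$ whose center lies in $\curlySOf{D_\eps}$, the first task is to identify this preimage. Using Theorem~\ref{t. an ideal of ProjModel-Dstar survives in some epsilon} together with Proposition~\ref{localization for proj models}, I would show $\centermap\inverse\big(\curlySOf{D_\eps}\big)$ is exactly the basic open set of $\curlySOf{\dstar}$ where the unit $\theta/a_\eps$ (with $a_\eps = x_\eps$, in the notation of Lemma~\ref{lem technical for localization projmodel}) is invertible — equivalently, it is $\curlySOf{\dstar[\theta/x_\eps]}$, and by Proposition~\ref{localization for proj models} the ring $\dstar[\theta/x_\eps]$ has $D_\eps(t)$ as a further localization, in fact $D_\eps(t) = S\inverse(\dstar[\theta/x_\eps])$ for the primitive-polynomial set $S$. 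So on each affine piece, $\centermap$ restricts to the composite $\Spec D_\eps(t) \hookrightarrow \Spec \dstar[\theta/x_\eps] \to \Spec\dstar$, i.e. a localization map followed by a localization map. These are ring maps, so by gluing we get a morphism of schemes; one must check the gluings agree on overlaps, which follows because all the maps in sight are localizations of the single domain $\dstar$ inside the common fraction field $K(t)$, and the center map is already known (previous lemma) to be the underlying continuous map.

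**Flatness.** Localizations are flat, so $\dstar \to D_\eps(t)$ is flat for each $\eps$. Flatness of a morphism is local on the target, and the $\curlySOf{D_\eps}$ cover $\modelX$; more precisely, flatness of $\centermap$ can be checked after restricting to the affine cover $\{\curlySOf{D_\eps}\}$ of $\modelX$, and over $\curlySOf{D_\eps}$ the morphism is $\Spec$ of the flat ring map $D_\eps(t) \hookrightarrow$ (the corresponding localization of $\dstar$) — wait, the direction is $\dstar \to D_\eps(t)$, and this ring map is a localization hence flat, so the restricted morphism $\centermap\inverse(\curlySOf{D_\eps}) = \Spec D_\eps(t) \to \curlySOf{D_\eps}$ is... actually here I must be careful: $\centermap$ restricted over $\curlySOf{D_\eps}$ is a morphism $\Spec D_\eps(t) \to \Spec D_\eps$, and $D_\eps \to D_\eps(t)$ is flat (the Nagata extension is always flat, being a localization of a polynomial ring), so this restriction is flat. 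Thus $\centermap$ is flat.

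**Surjectivity.** For faithful flatness it remains to show $\centermap$ is surjective, i.e. every $S\in\modelX$ is the center of some $R\in\curlySOf{\dstar}$. Given $S\in\modelX$, it lies in some $\curlySOf{D_\eps}$, say $S = (D_\eps)_Q$ for a prime $Q\ideal D_\eps$; then $Q D_\eps(t)$ is a prime of $D_\eps(t)$ (the Nagata extension has the property that primes of $D_\eps$ extend to primes of $D_\eps(t)$ lying over them), and contracting along $\dstar \to D_\eps(t)$ gives a prime $P = Q D_\eps(t) \cap \dstar$ with $\centermap(\dstar_P) = S$ — this is exactly the content of the previous lemma's proof read backwards, combined with the fact that $\dstar_P$ dominates $S$. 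Hence $\centermap$ is surjective; a flat surjective morphism of schemes is faithfully flat. I would also remark that $\modelX$ is quasi-compact, so faithful flatness is equivalent to the scheme-theoretic statement that $\Spec\dstar \to \modelX$ is a flat cover.

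**Main obstacle.** The genuinely delicate point is the first step: pinning down $\centermap\inverse(\curlySOf{D_\eps})$ and checking that the locally-defined ring maps glue to a well-defined scheme morphism whose underlying map is the center map $\centermap$ already shown continuous. Everything hinges on the identification $D_\eps(t) = S\inverse N\inverse \dstar$ from Proposition~\ref{localization for proj models} and on the fact that the preimage of $\curlySOf{D_\eps}$ is genuinely open and equals $\curlySOf{D_\eps(t)}$ viewed inside $\curlySOf{\dstar}$ — this uses Theorem~\ref{t. an ideal of ProjModel-Dstar survives in some epsilon} to know that a prime of $\dstar$ survives in at least one $D_\eps(t)$, and Corollary~\ref{proj-model max Dstar comes from Depsilons} for the closed points. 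Once the morphism is correctly set up, flatness and surjectivity are then short formal consequences.
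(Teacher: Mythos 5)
Your plan is correct, but it takes a genuinely different route from the paper's proof, which is considerably shorter. The paper never identifies \( \centermap\inverse\big(\curlySOf{D_{\eps}}\big) \) and never glues: since \( \sheafO_{\modelX}(U)=\bigcap_{R\in U}R \) and every \( R^{\ast}\in\centermap\inverse(U) \) contains its center \( \centermap(R^{\ast})\in U \), the inclusions \( \sheafO_{\modelX}(U)\subseteq \sheafO_{\dstar}\big(\centermap\inverse(U)\big) \) already define the morphism of sheaves, with stalk maps the local inclusions \( \centermap(R^{\ast})\hookrightarrow R^{\ast} \); faithful flatness is then verified stalk by stalk, the properness of extended ideals coming from the fact that \( R^{\ast} \) dominates its center. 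Your version works locally over the cover \( \{\curlySOf{D_{\eps}}\} \) and buys more: it exhibits \( \centermap \) as an affine morphism with \( \centermap\inverse\big(\curlySOf{D_{\eps}}\big)\cong\Spec D_{\eps}(t) \), and in fact shows \( D_{\eps}(t) \) equals the one-element localization \( \dstar[\theta/x_{\eps}] \). The price is exactly the obstacle you flag, and it is a real one: you must prove that \( x_{\eps}/\theta\notin P \) forces the center of \( \dstar_{P} \) to lie in \( \curlySOf{D_{\eps}} \), which does not follow from Theorem~\ref{t. an ideal of ProjModel-Dstar survives in some epsilon} alone (that result only places the center in \emph{some} \( \curlySOf{D_{\eps'}} \)). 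One way to close this is to note \( D_{\eps}\subseteq\dstar[\theta/x_{\eps}]\subseteq\dstar_{P} \), pass to a valuation ring \( V \) of \( K(t) \) dominating \( \dstar_{P} \), observe that \( W=V\cap K \) contains \( D_{\eps} \) and hence dominates a localization of \( D_{\eps} \), and conclude by uniqueness of centers on the irredundant premodel \( \modelX \); the reverse inclusion is immediate because both \( \theta/x_{\eps} \) and \( x_{\eps}/\theta \) lie in \( \dstar_{P} \) when the center contains \( D_{\eps} \). With that identification in place, your flatness and surjectivity steps are short and correct (surjectivity is precisely the ``properly dominates'' half of the lemma preceding the theorem), so the argument goes through.
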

\begin{proof}
	Note that for any open set \( U \subseteq \modelX \), we have \( {\sheafO}_{\modelX} (U) = \bigcap_{R\in U} R \) and so \( {\sheafO}_{\modelX} (U) \subseteq \sheafO_{\dstar} ({\centermap}\inverse (U)) \) as all \(  \centermap(R^{\ast})\) are contained in \( R^{\ast} \). Hence, \( \centermap \) induces a morphism of sheaves \( {\sheafO}_{\modelX} \to {\centermap}_{\ast} \sheafO_{\dstar} \) where \( {\centermap}_{\ast} \sheafO_{\dstar} \) is the direct image sheaf.	Moreover, for any \( R^{\ast} \in \curlySOf{\dstar} \) the induced map at stalks is the inclusion \( \centermap(R^{\ast}) \hookrightarrow R^{\ast} \) and hence a local map. This completes the proof that \( \centermap \) gives a morphism of schemes. 
	
	To argue the faithful flatness, we need to show that \( R^{\ast} \) is faithfully flat over \( \centermap(R^{\ast}) \) for any \( R^{\ast} \in \curlySOf{\dstar} \). We fix \( R^{\ast} \in \curlySOf{\dstar} \) and let \( R = \centermap(R^{\ast}) \). As \( R \subseteq R^{\ast} \), it is enough to check for any ideal \( I \) of \( R \), its extension \( I R^{\ast} \) is proper in \( R^{\ast} \). This clearly holds since \( R \) is dominated by \( R^{\ast} \). 
\end{proof}

\subsection{Relevant ideals} \label{Subsec. rel Ideals of Dstar}

We focus on the ideals of \( R(t) \) that are extensions of the ideals of a ring \( R \). 
\begin{definition}
	An ideal \( I\ideal  R(t) \) is called relevant if \( I \) can be generated by elements from \( R \); in other words $I = ( I \cap R )  R(t)$.
\end{definition}
All maximal ideals of \( R(t) \) are relevant. However, unless \( R \) is an arithmetical ring, not all ideals of \( R(t) \) are relevant (see \cite{anderson1976multiplication}).
\begin{example}
	For the ring \( \complexC[x,y] \), the principal ideal \( \gendby{xt+y }\)  is not relevant. 
\end{example}

We will implicitly adopt any of the following equivalent conditions as our definition. 
\begin{prop}
	For an ideal \( I \) of \( R(t) \), the followings are equivalent.
	\begin{enumerate} [(a)]
		\item \( I \) is relevant.  \label{itemC in homLemma}
		\item \( I\cap R[t] \) is homogeneous (of degree $0$). \label{itemB in homLemma}
		\item \( I \) is an extension of a homogeneous ideal of \( R[t] \). \label{itemD in homLemma}
	\end{enumerate}
\end{prop}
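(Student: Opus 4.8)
The plan is to prove the chain of implications $(\ref{itemC in homLemma})\Rightarrow(\ref{itemB in homLemma})\Rightarrow(\ref{itemD in homLemma})\Rightarrow(\ref{itemC in homLemma})$. The key structural fact I want to exploit is the description of $R(t)$ as a localization: $R(t)=N^{-1}R[t]$ where $N$ is the multiplicative set of primitive polynomials, and primitive polynomials are homogeneous only in the trivial sense, so localizing at $N$ behaves well with respect to the grading on $R[t]$ in which $R$ sits in degree $0$. More precisely, the grading to use is the one where $\deg t = 1$; a homogeneous ideal "of degree $0$" means an ideal generated by its degree-$0$ part, i.e.\ by elements of $R$.

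First I would handle $(\ref{itemC in homLemma})\Rightarrow(\ref{itemB in homLemma})$: assume $I=(I\cap R)R(t)$, and show $I\cap R[t]$ is generated by $I\cap R$, hence homogeneous. Given $f\in I\cap R[t]$, write $f=\sum_j (r_j/g_j)\,a_j$ with $r_j\in I\cap R$, $a_j\in R[t]$, and $g_j\in N$ primitive; clearing denominators, $gf = \sum_j r_j a_j'$ for some primitive $g\in N$ and $a_j'\in R[t]$, so $gf$ lies in the ideal $(I\cap R)R[t]$ of $R[t]$. Now I need that $gf\in (I\cap R)R[t]$ together with $g$ primitive forces $f\in (I\cap R)R[t]$ — this is the content/Dedekind–Mertens type step and is the part I expect to be the main obstacle, since it is exactly where primitivity does real work; I would reduce modulo the ideal $J=I\cap R$ and argue that $\bar g\,\bar f=0$ in $(R/J)[t]$ with $\bar g$ primitive over $R/J$ implies $\bar f=0$, which holds because a primitive polynomial over any ring is a nonzerodivisor on polynomials (each coefficient of $\bar f$ is killed by the unit ideal's worth of coefficients of $\bar g$ via a triangular system). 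Once $f\in (I\cap R)R[t]$, it is a homogeneous-degree-$0$ element-combination, so $I\cap R[t]$ is homogeneous.

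The implication $(\ref{itemB in homLemma})\Rightarrow(\ref{itemD in homLemma})$ is then nearly formal: if $I\cap R[t]$ is homogeneous, set $H=I\cap R[t]$; I claim $I=HR(t)$. The inclusion $HR(t)\subseteq I$ is clear. For the reverse, any element of $I$ has the form $\alpha=f/g$ with $f\in R[t]$, $g\in N$; since $g$ is a unit in $R(t)$, $f=g\alpha\in I\cap R[t]=H$, so $\alpha=f/g\in HR(t)$. Finally $(\ref{itemD in homLemma})\Rightarrow(\ref{itemC in homLemma})$: if $I=HR(t)$ for a homogeneous (degree-$0$-generated) ideal $H\trianglelefteq R[t]$, then $H$ is generated by its degree-$0$ part $H_0\subseteq R$, so $I$ is generated by $H_0\subseteq R$, hence $I=(I\cap R)R(t)$ once we note $H_0\subseteq I\cap R$. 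I would also remark on why "homogeneous" must mean "generated in degree $0$" here: a general homogeneous ideal of $R[t]$ under $\deg t=1$ could involve generators $rt^k$, but such an element becomes a unit multiple of $r$ in $R(t)$ only when... — actually the cleanest route, which I would take, is to observe that in $R[t]$ with this grading the only homogeneous elements of positive degree are of the form $rt^k$, and $rt^k\in R(t)$ generates the same ideal as $r$, so every homogeneous ideal of $R[t]$ extends to a relevant ideal of $R(t)$ and the parenthetical "(of degree $0$)" in $(\ref{itemB in homLemma})$ records that $I\cap R[t]$ is not merely homogeneous but actually generated by $R$-elements, which is what the argument above delivers.
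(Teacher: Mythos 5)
Your proof is correct, and it differs from the paper's in a way worth recording. The paper proves the two biconditionals (a)$\Leftrightarrow$(b) and (a)$\Leftrightarrow$(c) directly, disposing of (a)$\Leftrightarrow$(b) with the single observation that a homogeneous element of \( R[t] \) becomes an associate of an element of \( R \) in \( R(t) \); you instead run the cycle (a)$\Rightarrow$(b)$\Rightarrow$(c)$\Rightarrow$(a). The substantive difference is in (a)$\Rightarrow$(b): the paper's observation suffices for (b)$\Rightarrow$(a) and (c)$\Rightarrow$(a), but it does not by itself show that \( I=(I\cap R)R(t) \) forces every coefficient of an \( f\in I\cap R[t] \) to lie in \( I \). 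For that one needs exactly the fact you isolate, namely that \( gf\in JR[t] \) with \( g \) primitive implies \( f\in JR[t] \) (equivalently, \( JR(t)\cap R[t]=JR[t] \) for an ideal \( J\ideal R \)), and your reduction modulo \( J \) proves it. The one soft spot is your parenthetical ``triangular system'' justification that a primitive polynomial is a nonzerodivisor in \( S[t] \) for an arbitrary ring \( S \): the clean route is McCoy's theorem (a zerodivisor in \( S[t] \) is annihilated by a single nonzero constant, which cannot annihilate a unit content ideal) or the Dedekind--Mertens content formula \( c(g)^{n+1}c(f)=c(g)^{n}c(gf) \); the latter is how this contraction identity is established in \cite{gilmer1972multiplicative}, which the paper already cites for Nagata extensions. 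Your closing remark correctly resolves the only other point of friction --- in (c) the ideal of \( R[t] \) is merely homogeneous, not generated in degree \( 0 \), but its homogeneous generators \( rt^{k} \) become associates of \( r \) once \( t \) is inverted, which is precisely the paper's argument for (c)$\Rightarrow$(a). In short, your route is slightly longer but makes fully explicit the one direction the paper treats most tersely.
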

\begin{proof}
	The equivalence of \eqref{itemC in homLemma}  and \eqref{itemB in homLemma} comes from the observation that as elements of \( R(t) \), a homogeneous element of \( R[t] \) is an associate of an element of \( R \). Moreover, the implication \eqref{itemC in homLemma} implies \eqref{itemD in homLemma} is trivial since assuming~\eqref{itemC in homLemma} gives that $(I\cap R)R[t]$ is homogeneous and extends to \( I \). Finally, assume~\eqref{itemD in homLemma}; that is \( I \) is an extension of a homogeneous ideal from \( R[t] \), say $I'$.  As a homogeneous ideal, $I'$ has a set of generators consisting of homogeneous elements from \( R[t] \).  
	The same set also generates \( I \) in $R (t)$ and furthermore, using the fact that $t $ is unit in \( R(t) \), we can switch the generators by their associate elements from \( R \). Hence, \( I \) can be generated by elements from~\( R \).
\end{proof}

We now extend this definition to the ideals of \( \dstar \) where \( \dstar \) is obtained from a model \( \modelX \) as in Definition~\ref{notation Dstar from models}.

\begin{definition}
	An ideal $I\ideal \dstar$ is called relevant if its extension $I R (t)$ is relevant for all \( R\in \modelX \).
\end{definition}

\begin{example}
	Let \( D=\complexC[x,y] \) and \( \modelX \) be the projective model defined by \( x,\,y \). Then as ideals of \( \dstarof[\modelX] \), the ideal \( \gendby{xt+y }\)  is relevant, but \( \gendby{xt^2+y } \) is not. Moreover,  \( \gendby{xt+y , \, \frac{x}{xt+y}} \) is also relevant.
\end{example}
In similarity with Proposition~\ref{p. equiv def for Dstar}, instead of checking $IR(t)$ for all \( R \in \modelX \), one can also check that \( IA(t) \) is relevant for all \( A\in \affinesInX \).
\begin{prop} 
	Let \( \{ D_{\eps}\}_{\eps  \in \Lambda }\subseteq \affinesInX \) define \( \modelX \). For an ideal \( I \) of \( \dstar \) the following are equivalent.
	\begin{enumerate}[(a)]
		\item $ID_{\eps} (t) $ is relevant, for any \( \eps \). \label{enum1}
		\item $IR(t)$ is relevant, for any \( R\in \modelX \). \label{enum2}
		\item \( IA(t) \) is relevant, for any affine ring \( A \) of \( D \) satisfying \( \curlySOf{A} \subseteq \modelX \). \label{enum3}
	\end{enumerate}
\end{prop}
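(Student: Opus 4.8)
The plan is to prove the cycle $(a)\Rightarrow(c)\Rightarrow(b)\Rightarrow(a)$, with all three implications resting on a single auxiliary statement which I would isolate as a lemma, since it carries the content and gets used twice: \emph{if $A\in\affinesInX$ and $IR(t)$ is relevant for every $R\in\curlySOf{A}$, then $IA(t)$ is relevant.}

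To prove the lemma, first note $I\subseteq\dstar\subseteq A(t)$: by Proposition~\ref{p. equiv def for Dstar} we have $A(t)=\bigcap_{R\in\curlySOf{A}}R(t)$, and since $\curlySOf{A}\subseteq\modelX$ this contains $\dstar\supseteq I$; the same proposition then gives $IA(t)=\bigcap_{R\in\curlySOf{A}}IR(t)$. Because $A(t)$ is the localization of $A[t]$ at the set of primitive polynomials, each element of $IA(t)$ is an $A(t)$-multiple of a polynomial lying in $IA(t)\cap A[t]$, so $IA(t)$ is generated by $IA(t)\cap A[t]$; hence, by the homogeneity characterization of relevant ideals proved earlier in this subsection, it suffices to show that $IA(t)\cap A[t]$ is a homogeneous ideal of $A[t]$. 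Fix $f=\sum_j a_jt^j\in IA(t)\cap A[t]$; I claim each homogeneous component $a_jt^j$ again lies in $IA(t)\cap A[t]$, for which it is enough to see $a_j\in IA(t)$. Given $R\in\curlySOf{A}$, from $f\in IA(t)\subseteq IR(t)$ and $f\in A[t]\subseteq R[t]$ we get $f\in IR(t)\cap R[t]$, a homogeneous ideal because $IR(t)$ is relevant, so $a_jt^j\in IR(t)$ and — $t$ being a unit of $R(t)$ — also $a_j\in IR(t)$. As this holds for all $R\in\curlySOf{A}$, $a_j\in\bigcap_{R\in\curlySOf{A}}IR(t)=IA(t)$, proving the lemma.

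Granting the lemma the three implications are quick. $(c)\Rightarrow(b)$: for $R\in\modelX$, the cover $\modelX=\bigcup_\eps\curlySOf{D_\eps}$ gives $R=(D_\eps)_P$ for some $\eps$ and some prime $P$; by $(c)$, $ID_\eps(t)=\bigl(ID_\eps(t)\cap D_\eps\bigr)D_\eps(t)$ is relevant, and extending this ideal along the inclusion of Nagata extensions $D_\eps(t)\subseteq R(t)$ shows $IR(t)$ is generated by elements of $D_\eps\subseteq R$, hence relevant. $(b)\Rightarrow(a)$: fix $\eps$; by $(b)$, $IR(t)$ is relevant for every $R\in\curlySOf{D_\eps}\subseteq\modelX$, and the lemma gives $ID_\eps(t)$ relevant. $(a)\Rightarrow(c)$: let $A\in\affinesInX$; each $R\in\curlySOf{A}\subseteq\modelX$ lies in some $\curlySOf{D_\eps}$, so exactly as in $(c)\Rightarrow(b)$ the ideal $IR(t)$ is the extension of the relevant ideal $ID_\eps(t)$ and is thus relevant, whereupon the lemma yields $IA(t)$ relevant.

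The one place calling for a little care is the bookkeeping with Nagata extensions behind the coefficient-chasing: that $B\subseteq B'$ forces $B(t)\subseteq B'(t)$ (a primitive polynomial of $B[t]$ stays primitive in $B'[t]$), that an ideal of $B(t)$ is generated by its contraction to $B[t]$, and that the extension of a relevant ideal along such an inclusion remains relevant. Each of these unwinds immediately; with them in place, the entire argument is just Proposition~\ref{p. equiv def for Dstar} together with the homogeneity description of relevant ideals.
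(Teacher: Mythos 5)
Your proposal is correct and uses essentially the same two ingredients as the paper's proof: relevance passes to extensions along $D_{\eps}(t)\subseteq R(t)$ because $IR(t)$ is then generated by $ID_{\eps}(t)\cap D_{\eps}\subseteq R$, and relevance of all $IR(t)$ for $R\in\curlySOf{A}$ yields relevance of $IA(t)$ by catching the coefficients of each $f\in IA(t)\cap A[t]$ inside $\bigcap_{R\in\curlySOf{A}}IR(t)=IA(t)$ via Proposition~\ref{p. equiv def for Dstar}. The only differences are cosmetic: you traverse the cycle as $(a)\Rightarrow(c)\Rightarrow(b)\Rightarrow(a)$ and isolate the second ingredient as a lemma, whereas the paper proves $(a)\Rightarrow(b)\Rightarrow(c)$ and notes $(c)\Rightarrow(a)$ is immediate.
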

\begin{proof}
	Assume \eqref{enum1}. To show \eqref{enum2}, fix an arbitrary \( R\in \modelX \). Also fix an \( \eps \) such that $R\in \curlySOf{D_{\eps}}$. In particular, $D_{\eps} \subseteq R$ and so $D_{\eps} (t)  \subseteq R(t)$. Now, we have 
	\begin{equation*}
		I D_{\eps} (t) \cap D_{\eps} \subseteq I R(t) \cap D_{\eps} \subseteq I R(t) \cap R
	\end{equation*}
	and extending this inequality to \( R(t) \) yields
	\begin{equation*}
		\left( 	I D_{\eps} (t) \cap D_{\eps}  \right) R(t) \subseteq \left( I R(t) \cap R \right) R(t) .
	\end{equation*}
	On the other hand, by assumption,  
	$\left( 	I D_{\eps} (t) \cap D_{\eps}  \right)  D_{\eps} (t) = ID_{\eps} (t)$ holds and hence we get 
	\begin{equation*}
		IR(t)= \left(  ID_{\eps} (t) \right) R(t) \subseteq \left( I R(t) \cap R \right) R(t)
	\end{equation*}
	which shows $IR(t)$ is relevant.  
	
	Now, assume \eqref{enum2} and fix a ring \( A \in \affinesInX \). Take a polynomial \( f \) from $ IA(t) \cap A[t]$. We need to show that \( IA(t) \) contains coefficients of \( f \). For any \( R\in \curlySOf{A} \),  $f\in R(t)$ and so by  \eqref{enum2}, $IR(t)$ contains the coefficients of \( f \). Therefore, we have
	\begin{equation*}
		\{ r_0 , \dotsc , , r_m \}\subseteq \bigcap_{R \in \curlySOf{A}} IR(t) = IA(t)
	\end{equation*}
	where $f= r_0  + \dotsb + r_m t^m $. Lastly, we note that \eqref{enum1} is a particular case of~\eqref{enum3}.
\end{proof}

\section{Ideals on Projective Models} \label{Sec. Abh vs rel ideals}

Throughout this section, \( \modelX \) will be a fixed model of a field \( K \) over an integral domain \( D \). Our goal is to establish a bijective correspondence between the relevant ideals on \( \dstar \) and the ideals on \( \modelX \) when \( \modelX \) is a projective model. We start by defining a map that sends a relevant ideal to an ideal on \( \modelX \), without requiring projectivity. Afterward, assuming \( \modelX \) is  a projective model, we provide the inverse of the assignment to demonstrate that the map is bijective. 

\begin{definition} 
	For a given relevant ideal \( I \) of \( \dstar \), define the function \( \abyideal \)  on \( \modelX \) by setting 
	\begin{equation*}
		\abyideal (R) = IR(t)\cap R
	\end{equation*}
	for each \( R\in \modelX \). This function \( \abyideal \)  is referred to as the preideal associated with \( I \).
\end{definition}
Note that the function \( \abyideal \)  defined above indeed qualifies as a preideal on \( \modelX \), since $ IR(t)\cap R $ is an ideal of \( R \) for any \( R\in \modelX \). For consistency with our established notation, we will henceforth write \( \abyideal R \)  instead of $\abyideal(R)$.

\begin{lemma} \label{lemma for rel to ideal} 
	Let \( I \) be a relevant ideal of \( \dstar \) and \( \abyideal \)  be the preideal associated with \( I \). Then, for any \( A\in \affinesInX \), the equality $A\cap \abyideal = A \cap IA(t)$ holds.
\end{lemma}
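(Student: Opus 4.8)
The plan is to unwind both sides of the claimed equality straight from the definitions and reduce everything to Proposition~\ref{p. equiv def for Dstar} together with the elementary fact that an integral domain is the intersection of its localizations at primes. First I would expand the left-hand side: by the meaning of the notation $A\cap\abyideal$ and of the preideal associated with $I$, we have $A\cap\abyideal=\bigcap_{R\in\curlySOf{A}}\abyideal R=\bigcap_{R\in\curlySOf{A}}\bigl(IR(t)\cap R\bigr)$. Since intersecting a family of sets each written as $X_R\cap Y_R$ is the same as intersecting the $X_R$'s and the $Y_R$'s separately, this factors as $\bigl(\bigcap_{R}IR(t)\bigr)\cap\bigl(\bigcap_{R}R\bigr)$, both intersections ranging over $R\in\curlySOf{A}$.

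Next I would evaluate the two factors. The second is immediate: $\bigcap_{R\in\curlySOf{A}}R=A$, because $A$ is an integral domain and hence equals the intersection of its localizations at (maximal) primes, all of which occur among the elements of $\curlySOf{A}$. For the first factor, the key move is to view $IA(t)$ as an honest ideal of the ring $A(t)$ — which is legitimate since $\dstar=\bigcap_{R\in\modelX}R(t)\subseteq\bigcap_{R\in\curlySOf{A}}R(t)=A(t)$, the last equality being the unit-ideal case of Proposition~\ref{p. equiv def for Dstar} — and then apply that same proposition to the ideal $IA(t)$ of $A(t)$, which gives $IA(t)=\bigcap_{R\in\curlySOf{A}}\bigl(IA(t)\bigr)R(t)$. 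For each $R\in\curlySOf{A}$ we have $A(t)\subseteq R(t)$, so the extension $\bigl(IA(t)\bigr)R(t)$ is simply $IR(t)$; hence the first factor equals $IA(t)$. Combining, $A\cap\abyideal=IA(t)\cap A=A\cap IA(t)$, as desired.

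The argument is short, so there is no genuine obstacle; the only place to be careful is the bookkeeping of what the two differently-typed expressions $A\cap\abyideal$ and $A\cap IA(t)$ denote — an intersection of ideals of the various local rings $R\in\curlySOf{A}$ versus the contraction to $A$ of a single ideal of $A(t)$ — and making sure Proposition~\ref{p. equiv def for Dstar} is invoked with $IA(t)$ treated as an ideal of $A(t)$, not of $\dstar$. It is also worth remarking that relevance of $I$ plays no role in this lemma beyond guaranteeing that the associated preideal $\abyideal$ is defined in the first place.
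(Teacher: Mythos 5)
Your proof is correct and follows essentially the same route as the paper's: both reduce the claim to Proposition~\ref{p. equiv def for Dstar} applied to the ideal \( IA(t) \) of \( A(t) \) together with the identity \( A=\bigcap_{R\in\curlySOf{A}}R \), differing only in that you start from \( A\cap\abyideal \) and work toward \( A\cap IA(t) \) while the paper proceeds in the opposite direction. Your explicit remarks that \( \dstar\subseteq A(t) \) so that \( IA(t) \) is a genuine ideal of \( A(t) \), and that relevance of \( I \) is not actually used, are accurate and consistent with the paper.
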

\begin{proof}
	Fix \( A\in \affinesInX \), that is \( A \) is an affine ring such that \( \curlySOf{A} \subseteq \modelX \). Applying Proposition~\ref{p. equiv def for Dstar} to the ideal \( IA(t) \) gives 
	\begin{equation*}
		IA(t) \cap A = \Big(  \bigcap_{R \in \curlySOf{A}} IR(t) \ \Big) \cap A .
	\end{equation*}
	Moreover, using the fact $A= \bigcap_{R \in \curlySOf{A}} R$ 
	we  can write 
	\begin{equation*}
		\Big(  \bigcap_{R \in \curlySOf{A}} IR(t) \ \Big) \cap A = \bigcap_{R \in \curlySOf{A}} \left( IR(t) \cap R \right) = \bigcap_{R \in \curlySOf{A}} \abyideal R 
	\end{equation*}
	which yields the desired equality $IA(t) \cap A = A\cap \abyideal$.
\end{proof}

\begin{prop} \label{rel ideal gives ideal on models}  
	Let \( I \) be a relevant ideal of \( \dstar \) and \( \abyideal \)  be the preideal associated with \( I \). Then, \( \abyideal \)  is an ideal on \( \modelX \). 
\end{prop}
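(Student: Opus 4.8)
The plan is to verify the ideal condition of Proposition~\ref{equiv def of aby ideal} using its local characterization \eqref{equiv def for aby ideal condition 2}: for each \( S\in \modelX \) I need to produce an \( A\in \affinesInX \) with \( S\in \curlySOf{A} \) such that \( \abyideal R = (A\cap\abyideal)R \) for every \( R\in\curlySOf{A} \). Since the set \( \set{\curlySOf{A}}{A\in\affinesInX} \) is a basis (Proposition~\ref{lem affines are open}~\eqref{item2 in lem affines are open}), given \( S \) I can fix any \( A\in\affinesInX \) with \( S\in\curlySOf{A} \), and it suffices to prove the equality \( \abyideal R = (A\cap\abyideal)R \) for an arbitrary \( R\in\curlySOf{A} \). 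By Lemma~\ref{lemma for rel to ideal}, \( A\cap\abyideal = A\cap IA(t) \), so the target equality becomes \( IR(t)\cap R = \big(A\cap IA(t)\big)R \).

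First I would unwind the left-hand side: since \( R\in\curlySOf{A} \) we have \( A\subseteq R \) and \( A(t)\subseteq R(t) \), and moreover \( R(t) = A(t)_{\mathfrak{p} A(t)} \) is a localization of \( A(t) \) at the prime \( \mathfrak{p}=\ker(A\to R/M_R\cdots) \); more directly, \( R = A_\mathfrak{p} \) for some prime \( \mathfrak{p}\ideal A \), so \( R(t) \) is the localization of \( A(t) \) at \( \mathfrak{p}A(t) \). Hence \( IR(t) = IA(t)R(t) = \big(IA(t)\big)_{\mathfrak{p}A(t)} \). Now I want to compute \( IR(t)\cap R \). The key point is that \( I \) is relevant over \( \dstar \), so \( IA(t) \) is a relevant ideal of \( A(t) \), i.e. \( IA(t) = \big(IA(t)\cap A\big)A(t) = (A\cap\abyideal)A(t) \) by Lemma~\ref{lemma for rel to ideal}. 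Write \( J = A\cap\abyideal = IA(t)\cap A \), an honest ideal of \( A \). Then \( IA(t) = JA(t) \), and consequently \( IR(t) = JA(t)R(t) = JR(t) \). Therefore \( IR(t)\cap R = JR(t)\cap R \), and I am reduced to the purely algebraic statement that for an ideal \( J\ideal A \) and a localization \( R = A_\mathfrak{p} \), one has \( JR(t)\cap R = JR \). This is standard: \( JR(t)\cap R = (JR)R(t)\cap R = JR \) because \( R(t) \) is a faithfully flat (indeed, the fibers are non-trivial) extension of \( R \) in which the contraction of an extended ideal from \( R \) is the original ideal — concretely, \( R(t)/JR(t) = (R/JR)(t) \) and \( R/JR \hookrightarrow (R/JR)(t) \) is injective, so \( JR(t)\cap R = JR \). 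Combining, \( \abyideal R = IR(t)\cap R = JR = (A\cap\abyideal)R \), which is exactly condition~\eqref{equiv def for aby ideal condition 2}, and Proposition~\ref{equiv def of aby ideal} concludes that \( \abyideal \) is an ideal on \( \modelX \).

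The step I expect to be the main (though modest) obstacle is the bookkeeping around \emph{which} ring plays the role of \( A \) in Proposition~\ref{equiv def of aby ideal}: the cleanest route is to pick \( A\in\affinesInX \) containing \( S \) and then show the desired equality holds for \emph{all} \( R\in\curlySOf{A} \) simultaneously, which forces me to keep the relevance of \( IA(t) \) (not just of \( IR(t) \)) at the center of the argument — this is precisely why Lemma~\ref{lemma for rel to ideal} is invoked to identify \( A\cap\abyideal \) with \( A\cap IA(t) \) and thereby recognize \( IA(t) \) as the extension of this ideal. Once that identification is in place, the rest is the routine localization fact \( JR(t)\cap R = JR \); I would either cite it as a standard property of Nagata extensions (e.g. from \cite{gilmer1972multiplicative}) or give the one-line quotient-ring argument above. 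No projectivity of \( \modelX \) is needed for this proposition, consistent with the remark preceding the statement.
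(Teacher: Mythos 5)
Your proposal is correct and follows essentially the same route as the paper's proof: invoke Lemma~\ref{lemma for rel to ideal} to identify \( A\cap\abyideal \) with \( IA(t)\cap A \), use relevance of \( IA(t) \) to write \( IA(t)=(IA(t)\cap A)A(t) \), extend to \( R(t) \), and contract back to \( R \). The only difference is cosmetic: you route the verification through the local criterion of Proposition~\ref{equiv def of aby ideal} (while actually proving the equality for all \( A \) and \( R \) anyway, i.e.\ the definition itself, as the paper does), and you spell out the final contraction \( JR(t)\cap R=JR \) via faithful flatness of the Nagata extension, a step the paper leaves implicit.
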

\begin{proof}
	Fix a ring \( A\in \affinesInX \) and \( R\in \curlySOf{A} \). Denote $\fraka$ for the ideal $ IA(t)\cap A$. As \( I \) is relevant, we have the equality $I A(t) = \fraka A(t)$ and by extending it to the ring \( R(t) \), we get $I R(t) = \fraka R(t)$. Now, contracting to \( R \) gives $\abyideal R = \fraka R$ and hence, from Lemma~\ref{lemma for rel to ideal}, we conclude $\abyideal R = \left(A\cap \abyideal \right) R$. 
\end{proof}

Given a projective model \( \modelX \), it is easy to observe that the assignment that sends an ideal $I \ideal \dstar$ to its associated ideal \( \abyideal \)  is injective. Indeed, it is also surjective. To demonstrate this, we now write its inverse. 
\begin{lemma} \label{lemma for from abyideal to rel}  
	Let \( \abyideal \)   be an ideal on \( \modelX \). Consider
	\begin{equation*}
		I \coloneq \bigcap_{R \in \modelX} \left( \abyideal R \right) R(t)
	\end{equation*}
	which is an ideal of \( \dstar \). Then, 
	\begin{equation*}
		I=\bigcap_{\eps \in \Lambda}  \left( D_{\eps} \cap \abyideal \right) D_{\eps} (t)
	\end{equation*}
	for any family $\{D_{\eps}\}_{\eps \in \Lambda}\subseteq \affinesInX$ that defines \( \modelX \).
\end{lemma}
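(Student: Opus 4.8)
The plan is to reduce the statement to Proposition~\ref{p. equiv def for Dstar} applied one affine piece at a time, using the defining property of an ideal on a model; no projectivity of \( \modelX \) is needed. First I would dispatch the parenthetical claim that \( I \) is an ideal of \( \dstar \): each \( (\abyideal R)R(t) \) is an ideal of \( R(t) \), so \( I \subseteq \bigcap_{R\in\modelX} R(t) = \dstar \), and \( I \) is closed under addition and under multiplication by any element of \( \dstar \subseteq R(t) \); hence it is an ideal of \( \dstar \). This is the routine part.

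Next I would fix \( \eps \in \Lambda \) and compute the partial intersection \( \bigcap_{R\in\curlySOf{D_\eps}} (\abyideal R)R(t) \). Since \( \abyideal \) is an ideal on \( \modelX \) and \( D_\eps \in \affinesInX \), the definition of an ideal gives \( \abyideal R = (D_\eps \cap \abyideal)R \) for every \( R \in \curlySOf{D_\eps} \), so \( (\abyideal R)R(t) = (D_\eps \cap \abyideal)R(t) \). Now apply Proposition~\ref{p. equiv def for Dstar} with \( A = D_\eps \) to the ideal \( (D_\eps \cap \abyideal)D_\eps(t) \ideal D_\eps(t) \): it yields
\begin{equation*}
	\bigcap_{R\in\curlySOf{D_\eps}} (D_\eps \cap \abyideal)R(t) = (D_\eps \cap \abyideal)D_\eps(t).
\end{equation*}
Combining the two displayed identities gives \( \bigcap_{R\in\curlySOf{D_\eps}} (\abyideal R)R(t) = (D_\eps \cap \abyideal)D_\eps(t) \).

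Finally, since \( \{D_\eps\}_{\eps\in\Lambda} \) defines \( \modelX \), we have \( \modelX = \bigcup_{\eps\in\Lambda} \curlySOf{D_\eps} \), and an intersection indexed by a union decomposes as an iterated intersection, so
\begin{equation*}
	I = \bigcap_{R\in\modelX} (\abyideal R)R(t) = \bigcap_{\eps\in\Lambda}\ \bigcap_{R\in\curlySOf{D_\eps}} (\abyideal R)R(t) = \bigcap_{\eps\in\Lambda} (D_\eps \cap \abyideal)D_\eps(t),
\end{equation*}
which is the asserted equality. I do not anticipate a genuine obstacle here: the only delicate point is invoking the \emph{equality} \( \abyideal R = (D_\eps\cap\abyideal)R \) rather than a mere inclusion, but this is precisely the definition of an ideal on a model, so the lemma is essentially bookkeeping built on Proposition~\ref{p. equiv def for Dstar}.
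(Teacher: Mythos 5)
Your proof is correct and follows essentially the same route as the paper: replace \( \abyideal R \) by \( (D_\eps\cap\abyideal)R \) via the definition of an ideal on a model, extend to \( R(t) \), decompose the intersection over \( \modelX=\bigcup_\eps\curlySOf{D_\eps} \), and collapse each inner intersection using Proposition~\ref{p. equiv def for Dstar}. The only addition is your explicit (and correct) verification that \( I \) is an ideal of \( \dstar \), which the paper leaves implicit.
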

\begin{proof}
	Fix a  family \( \{D_{\eps}\} \subseteq \affinesInX \) that defines \( \modelX \). Denote $\fraka_{\eps}$ for the ideal \( D_{\eps} \cap \abyideal \). As \( \abyideal \)  is an ideal on \( \modelX \), for any \( \eps \) and \( R\in \curlySOf{D_{\eps} } \), we have 
	$ \abyideal R = \fraka_{\eps} R$ and hence, \( \big( \abyideal R \big) R(t) = \fraka_{\eps} R(t) \). Therefore,
	\begin{equation*}
		I  = \bigcap_{\eps \in \Lambda} \bigcap_{R \in \curlySOf{D_{\eps} }} \left( \abyideal R \right) R (t) 
		= \bigcap_{\eps \in \Lambda} \bigcap_{R \in \curlySOf{D_{\eps} }} \fraka_{\eps} R(t) 
		= \bigcap_{\eps \in \Lambda} \fraka_{\eps} D_{\eps} (t)
	\end{equation*}
	where the last equality comes from Proposition~\ref{p. equiv def for Dstar}. 
\end{proof}

\begin{prop} \label{ideal on models gives rel ideal}  
	Assume \( \modelX \) is a projective model. Let \( \abyideal \)   be an ideal on \( \modelX \) and $I = \bigcap_{R \in \modelX} \left( \abyideal R \right) R(t)$. Then \( I \) is relevant. 
\end{prop}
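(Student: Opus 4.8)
The plan is to reduce the whole statement to the chart-wise identity $ID_\eps(t)=\fraka_\eps D_\eps(t)$. Since $\modelX$ is projective, write $\modelX=\projModel(D;\{x_1,\dots,x_n\})$ with the $x_i$ nonzero, let $J$ be the $D$-submodule of $K$ they generate, and put $D_\eps=D[Jx_\eps^{-1}]$, so that $\{D_\eps\}_{\eps=1}^{n}\subseteq\affinesInX$ defines $\modelX$; write $\fraka_\eps=D_\eps\cap\abyideal$ as in Lemma~\ref{lemma for from abyideal to rel}. Once the identity $ID_\eps(t)=\fraka_\eps D_\eps(t)$ is known for every $\eps$, relevance of $I$ follows directly: given $R\in\modelX$, choose $\eps$ with $R\in\curlySOf{D_\eps}$; then $D_\eps(t)\subseteq R(t)$, so $IR(t)=\bigl(ID_\eps(t)\bigr)R(t)=\fraka_\eps R(t)$, which is generated by the elements of $\fraka_\eps\subseteq D_\eps\subseteq R$ and hence relevant.

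To prove the identity, first I would invoke Lemma~\ref{lemma for from abyideal to rel} to write $I=\bigcap_{j=1}^{n}\fraka_jD_j(t)$, a \emph{finite} intersection of $\dstar$-submodules of $K(t)$; the inclusion $ID_\eps(t)\subseteq\fraka_\eps D_\eps(t)$ is then immediate. For the reverse inclusion I would use Proposition~\ref{localization for proj models} to write $D_\eps(t)=W^{-1}\dstar$ for a multiplicative subset $W\subseteq\dstar$. Since localization is exact and the index set is finite, it commutes with the intersection:
\[
ID_\eps(t)=W^{-1}I=\bigcap_{j=1}^{n}W^{-1}\!\bigl(\fraka_jD_j(t)\bigr)=\bigcap_{j=1}^{n}\fraka_j\bigl(D_j(t)D_\eps(t)\bigr),
\]
where $D_j(t)D_\eps(t)$ denotes the compositum inside $K(t)$; in particular each factor contains $\fraka_jC_j$ with $C_j=D_j[D_\eps]$. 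Now $\curlySOf{C_j}=\curlySOf{D_j}\cap\curlySOf{D_\eps}$ by Corollary~\ref{p. intersection of affines is affine of models}, and since $\abyideal$ is an ideal on $\modelX$ we get $\fraka_jR=\abyideal R=\fraka_\eps R$ for every $R\in\curlySOf{C_j}$; intersecting over $R\in\curlySOf{C_j}$—an ideal of the domain $C_j$ being the intersection of its localizations at primes—yields $\fraka_jC_j=\fraka_\eps C_j\supseteq\fraka_\eps$. Hence every factor in the display contains $\fraka_\eps$, so $ID_\eps(t)\supseteq\fraka_\eps$, and therefore $ID_\eps(t)\supseteq\fraka_\eps D_\eps(t)$.

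The hard part is exactly this reverse inclusion $\fraka_\eps D_\eps(t)\subseteq ID_\eps(t)$: an element $a\in\fraka_\eps=D_\eps\cap\abyideal$ need not lie in those local rings $R\in\modelX$ with $D_\eps\not\subseteq R$, so it is not transparent that $a$ re-enters $I$. What overcomes this is the equality $\fraka_jC_j=\fraka_\eps C_j$ just obtained together with $C_j=D_j[D_\eps]=(D_j)_{x_\eps/x_j}$: it shows a sufficiently large power of $x_\eps/x_j$ carries $a$ into $\fraka_j\ideal D_j$. Concretely, with $\theta=\sum_j x_jt^{j-1}$ the element $x_\eps/\theta$ is a unit of $D_\eps(t)$ while $x_j/\theta$ is a unit of $R(t)$ for every $R\in\curlySOf{D_j}$; choosing one exponent $N$ that works uniformly over the finitely many charts, the element $(x_\eps/\theta)^{N}a$ lies in $(\abyideal R)R(t)$ for every $R\in\modelX$, hence in $I$, and dividing back by the unit $(x_\eps/\theta)^{N}$ gives $a\in ID_\eps(t)$. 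Either this direct clearing or the exactness argument above completes the proof, and the resulting identity $ID_\eps(t)=\fraka_\eps D_\eps(t)$ exhibits every $ID_\eps(t)$—hence $I$—as relevant.
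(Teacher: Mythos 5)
Your proof is correct, but it takes a genuinely different route from the paper's. The paper argues pointwise: for each $S\in\modelX$ it \emph{re-chooses} the generators of the defining module, replacing $x_i$ by $y_i=x_i+x_\eps$ exactly when $x_i/x_\eps$ lies in the center prime of $S$, so that every chart $A_i=D[y_1/y_i,\dotsc,y_n/y_i]$ of the resulting presentation is contained in $S$; localizing the finite intersection $I=\bigcap_i\left(A_i\cap\abyideal\right)A_i(t)$ at the localization $S(t)$ of $\dstar$ then collapses every term to $\left(\abyideal S\right)S(t)$. You instead keep the fixed charts $D_\eps$ and prove the chart-wise identity $ID_\eps(t)=\fraka_\eps D_\eps(t)$ by comparing $\fraka_j$ and $\fraka_\eps$ on the overlap rings $C_j=D_j[D_\eps]=(D_j)_{x_\eps/x_j}$, where the gluing condition for an ideal on $\modelX$ gives $\fraka_jC_j=\fraka_\eps C_j$, and exactness of localization through the finite intersection finishes the argument. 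Both proofs rest on the same two ingredients (Lemma~\ref{lemma for from abyideal to rel} and Proposition~\ref{localization for proj models}), and I checked the delicate points of yours: $W^{-1}$ does commute with the finite intersection of $\dstar$-submodules of $K(t)$; $\curlySOf{C_j}=\curlySOf{D_j}\cap\curlySOf{D_\eps}$ is nonempty (it contains $K$), so intersecting localizations at its primes legitimately recovers $\fraka_jC_j$ and $\fraka_\eps C_j$; and the deduction of relevance of $IR(t)$ from $IR(t)=\fraka_\eps R(t)$ with $\fraka_\eps\subseteq R$ is sound. Your approach is the more standard ``sections agreeing on overlaps'' argument and has the advantage of producing the clean identity $ID_\eps(t)=\fraka_\eps D_\eps(t)$ for the original charts; the paper's generator-perturbation trick is more ad hoc but delivers $IS(t)=\left(\abyideal S\right)S(t)$ directly at every $S\in\modelX$ (a fact reused in the proof of Theorem~\ref{t. ab ideals vs rel ideals}), which your version also recovers by picking a chart containing $S$. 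One stylistic remark: you offer two alternative completions of the reverse inclusion; the localization-exactness one is fully rigorous as written, while the ``direct clearing'' via $(x_\eps/\theta)^N$ is correct but would need the uniform choice of $N$ spelled out, so I would keep only the first.
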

\begin{proof}
	Say \( \modelX \) is determined by $\{x_1,\dotsc, x_n \} \subseteq K\setminus \{0\}$. Take an arbitrary $S\in \modelX$. There exists $\eps \in \{1, \dotsc ,n \}$ and a prime $P\ideal D_{\eps} $ such that the local ring \( S \) is the localization of  \( D_{\eps} \)  at the prime ideal \( P \) where \( D_{\eps} \) denotes \( D[x_1/ x_{\eps} ,\dotsc, x_n / x_{\eps}] \).
	
	Let 
	\begin{equation*}
		y_i=
		\begin{cases}
			x_i + x_{\eps} & \text{if } x_i/x_{\eps} \in P \\
			x_i  & \text{if } x_i/x_{\eps} \notin P
		\end{cases}
	\end{equation*}
	for each \( i=1, \dotsc , n \). Notice that $y_{\eps} = x_{\eps}$ since $x_{\eps}/x_{\eps} =1$ cannot be in the prime ideal \( P \). Therefore, the set $\{y_1, \dotsc , y_n \}$ generates the same \( D \)-module as \( \{ x_1, \dotsc , x_n\} \) and hence we have \( \modelX = \projModel(D; y_1, \dotsc, y_n) \). Let \( A_i \) denote the affine ring $A[y_1/y_i, \dotsc, y_n/y_i]$ for each $i=1, \dotsc ,n $. We claim that \( S \) contains \( A_i \) for all \( i \). We already know that \( S \) contains \( A_{\eps} \)  as $ A_{\eps} = D_{\eps} $. In particular, $y_1/y_{\eps}, \dotsc, y_n/y_{\eps} $ are in \( S \). First, we argue that these elements are invertible in \( S \). Fix some \( j \) from \( \{1 , \dotsc ,  n\} \). If $x_j/x_{\eps} \in P$, then $y_j=x_j+ x_{\eps}$ and so the element
	\begin{equation*}
		\frac{y_j}{y_{\eps}} 
		= \frac{x_j + x_{\eps}}{x_{\eps}}
		= \frac{x_j}{x_{\eps}} + 1
	\end{equation*}
	cannot be in the prime ideal \( P \). In the other case, when $x_j/x_{\eps} \notin P$, we trivially get $y_j/y_{\eps} \notin P$ as $y_j = x_j$. Hence, in both cases the element $y_j/ y_{\eps} \in D_{\eps}  $ is not contained by \( P \) and therefore it is a unit element of \( S \). Thus, all $y_1/y_{\eps}, \dotsc, y_n/y_{\eps} $ and their inverses are in \( S \). Since for all $i ,\, j$ we can write $y_j/ y_{i}$ as a product of these unit elements and their inverses, \( S \) must contain it as well.
	
	Now, since \( \modelX = \projModel(D; y_1, \dotsc, y_n) \), the family \( \{ A_i \}_{i=1}^n \) of affine rings defines \( \modelX \). Therefore, by using Lemma~\ref{lemma for from abyideal to rel} we write 
	\begin{equation*}
		I  =  \bigcap_{i=1}^n \big( A_i \cap \abyideal \big) A_i(t) = \bigcap_{i=1}^n  \fraka_{i} 
	\end{equation*}
	where $\fraka_{i} =  \left( A_i\cap \abyideal \right) A_i(t) \cap \dstar $. 
	On the other hand, \( S(t) \)  is the localization of \( A_{\eps}(t) \)  at the prime ideal \( PA_{\eps}(t) \). Moreover, Proposition~\ref{localization for proj models} tells that \( A_{\eps}(t) \)  is a localization of \( \dstar \). Therefore, \( S(t) \)  is a localization of \( \dstar \) as well. 
	Hence, using the fact that localizing the intersection of finitely many ideals is the same as intersecting their respective localizations, we write
	\begin{equation*}
		IS(t) 
		=  \left( \bigcap_{i=1}^n \fraka_{i} \right) S(t) 
		=  \bigcap_{i=1}^n  \fraka_{i} S(t) .
	\end{equation*}
	Now, for any \( i \), we showed that $A_i \subseteq S$ and hence $\fraka_{i} S(t) = \left( A_i \cap \abyideal \right) S (t)$. By combining this with the assumption that \( \abyideal \)  is an ideal on \( \modelX \), we get
	\begin{equation*}
		IS(t) 
		= \bigcap_{i=1}^n  \left( A_i \cap \abyideal \right) S (t)
		=  \bigcap_{i=1}^n  \left(  \abyideal S \right) S (t)
		=\left(  \abyideal S \right) S (t) .
	\end{equation*}
	Thus, $IS(t)$ is relevant as it is generated by elements from \( S \), namely by the $\abyideal S $.
\end{proof}

By combining Propositions \ref{rel ideal gives ideal on models} and \ref{ideal on models gives rel ideal}, we get the following result.
\begin{thm} \label{t. ab ideals vs rel ideals}
	Let \( \modelX \) be a projective model. Then 
	\begin{align*}
		\{\text{ideals on } \modelX \} 
		&\longrightarrow \{\text{relevant ideals of } \dstar \} 
		\\
		\abyideal 
		&\longmapsto \bigcap_{R \in \modelX} \left( \abyideal R \right) R(t)
	\end{align*}
	gives a bijective correspondence with the inverse $I \mapsto \abyideal$ where $	\abyideal R =  IR(t)\cap R $
	for any $	R \in \modelX$.
\end{thm}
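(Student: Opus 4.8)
The plan is to show that the two displayed assignments are mutually inverse. Write $\Phi$ for the map sending an ideal $\abyideal$ on $\modelX$ to $\bigcap_{R\in\modelX}(\abyideal R)R(t)$ and $\Psi$ for the map sending a relevant ideal $I$ of $\dstar$ to the preideal $\abyideal$ determined by $\abyideal R = IR(t)\cap R$. Proposition~\ref{ideal on models gives rel ideal} says $\Phi(\abyideal)$ is a relevant ideal of $\dstar$ and Proposition~\ref{rel ideal gives ideal on models} says $\Psi(I)$ is an ideal on $\modelX$, so both maps are well defined and it remains to check $\Phi\circ\Psi=\mathrm{id}$ and $\Psi\circ\Phi=\mathrm{id}$.

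First I would dispatch $\Phi\circ\Psi=\mathrm{id}$, which is short. Let $I$ be relevant and $\abyideal=\Psi(I)$. For each $R\in\modelX$, relevance of $I$ means $IR(t)$ is relevant in $R(t)$, i.e. $IR(t)=(IR(t)\cap R)R(t)=(\abyideal R)R(t)$; hence $\Phi(\abyideal)=\bigcap_{R\in\modelX}(\abyideal R)R(t)=\bigcap_{R\in\modelX}IR(t)$, and Corollary~\ref{p. ideal on Dstar is intersection over model} (available because $\modelX$ is projective) identifies this intersection with $I$.

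Next I would treat $\Psi\circ\Phi=\mathrm{id}$, which is the substantive half. Given an ideal $\abyideal$ on $\modelX$ and $I=\Phi(\abyideal)$, I need $IR(t)\cap R=\abyideal R$ for every $R\in\modelX$. The key input is that the argument inside the proof of Proposition~\ref{ideal on models gives rel ideal} establishes more than relevance: for each fixed $R$ it builds a finite family of affine rings defining $\modelX$, all contained in $R$, with $R(t)$ a localization of $\dstar$, rewrites $I$ as a finite intersection over that family via Lemma~\ref{lemma for from abyideal to rel}, and distributes the localization to obtain $IR(t)=(\abyideal R)R(t)$. Then $IR(t)\cap R=(\abyideal R)R(t)\cap R=\abyideal R$, the last step being the identity $JR(t)\cap R=J$ valid for any ideal $J$ of any ring $R$, which is the faithful flatness of the Nagata extension $R\hookrightarrow R(t)$. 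Alternatively one can argue entirely on affine pieces: by Proposition~\ref{aby ideals are uniquely determined by ideals on affine pieces} it suffices to match $D_\eps\cap\Psi(I)$ with $D_\eps\cap\abyideal$ on a defining family; Lemma~\ref{lemma for rel to ideal} turns the former into $D_\eps\cap ID_\eps(t)$, and Proposition~\ref{localization for proj models} together with Lemma~\ref{lemma for from abyideal to rel}, the hypothesis that $\abyideal$ is an ideal on $\modelX$, and Proposition~\ref{p. equiv def for Dstar} evaluate this to $D_\eps\cap\abyideal$.

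I expect $\Psi\circ\Phi=\mathrm{id}$ to be the main obstacle, and within it the delicate point is that $\Phi(\abyideal)$ must recover $\abyideal$ exactly at every local ring of $\modelX$, not merely up to the coarser relation of being the same ideal on $\modelX$. That is what forces passing to a finite, point-adapted affine cover before localizing — localization commutes only with finite intersections, and $\modelX$ may be infinite — and then invoking faithful flatness of Nagata extensions for the final contraction. The reverse composite $\Phi\circ\Psi$ is comparatively free, being immediate from the definition of relevance and Corollary~\ref{p. ideal on Dstar is intersection over model}.
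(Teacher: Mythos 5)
Your proposal is correct and follows essentially the same route as the paper: well-definedness from Propositions~\ref{rel ideal gives ideal on models} and~\ref{ideal on models gives rel ideal}, the composite starting from a relevant ideal handled via relevance together with Corollary~\ref{p. ideal on Dstar is intersection over model}, and the composite starting from an ideal on \( \modelX \) handled by extracting the stronger identity \( IR(t) = (\abyideal R)R(t) \) from the proof of Proposition~\ref{ideal on models gives rel ideal} and then contracting along the faithfully flat extension \( R \hookrightarrow R(t) \). Your explicit justification of \( JR(t)\cap R = J \) by faithful flatness is a welcome detail the paper leaves implicit.
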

\begin{proof}
	From Propositions \ref{rel ideal gives ideal on models} and \ref{ideal on models gives rel ideal}, both maps are well-defined maps; so we need to show that they are indeed inverses of each other. Fix an ideal \( \abyideal \)  on \( \modelX \). Let \( I \) denote the relevant ideal $ \bigcap_{R \in \modelX} \left( \abyideal R \right) R(t)$ and $\abyideal'$ be the ideal on \( \modelX \) defined as $ \abyideal' R =  IR(t)\cap R $ for any $	R \in \modelX$. We need to show that $\abyideal = \abyideal'$. In the proof of Proposition~\ref{ideal on models gives rel ideal}, it is shown that for any \( R\in \modelX \), the equality $IR(t) = \big( \abyideal R \big) R(t)$ holds. Thus, we conclude $\abyideal = \abyideal' $ as the equation
	\begin{equation*}
		\abyideal R = \big( \abyideal R \big) R(t) \cap R = IR(t) \cap R = \abyideal' R 
	\end{equation*}
	holds for any \( R\in \modelX \).
	
	Now, fix a relevant ideal $I\ideal \dstar$. Let \( \abyideal \)  denote the ideal on \( \modelX \) defined as $ \abyideal R =  IR(t)\cap R $ for any $	R \in \modelX$ and $I'$ the relevant ideal $ \bigcap_{R \in \modelX} \left( \abyideal R \right) R(t)$. To prove $I=I'$, by Corollary~\ref{p. ideal on Dstar is intersection over model} it is enough to argue $IR(t) = I' R(t)$ for any \( R\in \modelX \). 
	Fix \( R\in \modelX \) and an affine ring \( A\in \affinesInX \) such that $R \in \curlySOf{A}$. As before, from Proposition~\ref{ideal on models gives rel ideal}, we have the equality $I'R(t) = \big( \abyideal R \big) R(t)$. On the other hand, by construction, $\abyideal R = IR(t) \cap R$ and therefore, we have $I'R(t) = \big(  IR(t) \cap R \big) R(t) =I R(t)$ where the latter equation comes from the assumption that \( I \) is relevant. Thus, we get the desired equality $I=I'$. 
\end{proof}

\begin{cor} \label{thm. relevant ideals corresponds q-coh sheaf of ideals}
	Let \( (\modelX , \sheafO ) \) be a blow-up scheme of an integral domain \( D \) over a finitely generated ideal. There exists a bijective correspondence between the relevant ideals of the corresponding \( \dstar \) and the quasi-coherent sheaf of ideals on  \( (\modelX , \sheafO )\).
\end{cor}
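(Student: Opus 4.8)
The plan is to recognize the blow-up scheme as (isomorphic to) a projective model, and then to compose the two correspondences already at our disposal: Theorem~\ref{t. ab ideals vs rel ideals}, which matches ideals on a projective model with relevant ideals of $\dstar$, and Proposition~\ref{p. correspondance of ab and qCoh ideals}, which matches ideals on a model with quasi-coherent sheaves of ideals on the associated scheme. So the only substantive work is the identification; everything after that is formal.

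First I would carry out the identification. Write the finitely generated ideal being blown up as $\gendby{x_1,\dotsc,x_n}\ideal D$, which we may assume nonzero (when it is zero the blow-up is empty and the statement is vacuous); then the blow-up is birational over $D$, so its function field is $K$, the fraction field of $D$. The blow-up $\Proj\big(\bigoplus_{m\ge 0}\gendby{x_1,\dotsc,x_n}^m\big)$ carries the standard affine cover by the charts $\Spec D[x_1/x_i,\dotsc,x_n/x_i]$, $i=1,\dotsc,n$, glued along the evident identifications over their overlaps; this is exactly the gluing datum that, by the construction in Section~\ref{Sec. models as schemes}, produces the scheme attached to the projective model $\projModel(D;x_1,\dotsc,x_n)=\bigcup_{i=1}^{n}\curlySOf{D[x_1/x_i,\dotsc,x_n/x_i]}$ (recall also that $\projModel(D;H)$ does not depend on the chosen generators of the ideal, so the answer is insensitive to the presentation). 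Hence $(\modelX,\sheafO)$ is isomorphic as a $D$-scheme to $(\projModel(D;x_1,\dotsc,x_n),\sheafO_{\projModel(D;x_1,\dotsc,x_n)})$. Alternatively, and perhaps more cleanly, one invokes Proposition~\ref{t. integral scheme is model}: the blow-up is an integral, separated $D$-scheme of finite type, hence corresponds to some model, and a chart-by-chart comparison shows this model is $\projModel(D;x_1,\dotsc,x_n)$.

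Once $\modelX$ is identified with the projective model $\projModel(D;x_1,\dotsc,x_n)$ and $\dstar=\dstarof[\modelX]$, the remainder is purely formal: Theorem~\ref{t. ab ideals vs rel ideals} gives a bijection between the ideals on $\modelX$ and the relevant ideals of $\dstar$, Proposition~\ref{p. correspondance of ab and qCoh ideals} gives a bijection between the ideals on $\modelX$ and the quasi-coherent sheaves of ideals on $(\modelX,\sheafO)$, and composing the two yields the claimed correspondence. I expect the only real obstacle to lie in the identification step, namely confirming that the abstract $\Proj$ of the Rees algebra together with its structure sheaf genuinely coincides with $\projModel(D;x_1,\dotsc,x_n)$ equipped with the sheaf $U\mapsto\bigcap_{R\in U}R$ of intersections of local rings; this reduces to matching the two standard affine covers and checking that the gluing maps agree, which is routine and is recorded in the literature cited in the introduction, together with dispatching the degenerate cases (zero ideal, or $D$ a field) by the conventions that render them trivial.
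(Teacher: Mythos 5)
Your proposal is correct and follows essentially the same route as the paper: identify the blow-up (via its standard charts $\Spec D[Ia_i^{-1}]$, as in the cited gluing construction) with the projective model $\projModel(D;I)$, then compose the bijections of Theorem~\ref{t. ab ideals vs rel ideals} and Proposition~\ref{p. correspondance of ab and qCoh ideals}. The extra care you take with the chart-by-chart identification and the degenerate cases is fine but not a departure from the paper's argument.
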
 
\begin{proof}
	Fix a finitely generated ideal \( I \) of \( D \). Recall that a way to construct the blow-up scheme of \( D \) over \( I \) is gluing affine schemes of the rings \( D[I{a_1}\inverse] , \dotsc ,  D[I{a_n}\inverse] \)  where $\{a_1 , \dotsc ,  a_n\}$ is a generating set for the ideal \( I \). For the definition and details, we refer the reader to \cite[414-415]{GortzAlgGeo2020} or to \cite[\S 5.6]{swansonHuneke_IntClosure} for a direct approach without using scheme terminology. Therefore, the scheme \( (\modelX , \sheafO )\) is isomorphic to the scheme corresponding to the projective model over \( D \) defined by \( I \), and so we may assume \( \modelX = \projModel(D;I) \). The rest follows from Proposition~\ref{p. correspondance of ab and qCoh ideals} and Theorem~\ref{t. ab ideals vs rel ideals}.
\end{proof}

\end{document}